\let\norm\undefined % <-- "Undefine" \norm
\DeclarePairedDelimiter\norm{\lVert}{\rVert}
\DeclareFontFamily{U}{MnSymbolC}{}
\DeclareSymbolFont{MnSyC}{U}{MnSymbolC}{m}{n}
\DeclareFontShape{U}{MnSymbolC}{m}{n}{
    <-6>  MnSymbolC5
   <6-7>  MnSymbolC6
   <7-8>  MnSymbolC7
   <8-9>  MnSymbolC8
   <9-10> MnSymbolC9
  <10-12> MnSymbolC10
  <12->   MnSymbolC12}{}
\DeclareMathSymbol{\hook}{\mathbin}{MnSyC}{'270}
\newcommand{\bb}[1]{
    \mathbb{#1}
}
\renewcommand{\cal}[1]{
    \mathcal{#1}
}
\newcommand{\eps}{
    \epsilon
}
\newcommand{\rarr}{
    \rightarrow
}
\renewcommand{\:}{
    \colon
}
\renewcommand{\del}[1]{
    \partial #1
}
\newcommand{\delbar}[1]{
    \overline{\partial} #1
}
\newcommand{\ie}{\textit{i.e.}
}
\newcommand{\cf}{\textit{c.f.}
}
\newcommand{\eg}{\textit{e.g.}
}
\newcommand{\rank}[1]{
    \text{\normalfont rank}\,#1
}
\renewcommand{\ker}[1]{
    \text{\normalfont ker}\,#1
}
\newcommand{\FLY}{\mathrm{FLY}}
\newcommand{\bs}{
    \char`\\
}
\newcommand{\wtilde}[1]{
    \widetilde{#1}
}
\newcommand{\what}[1]{
    \widehat{#1}
}
\newcommand{\co}{\mathrm{co}}
\newcommand{\Vol}{
    \text{\normalfont{Vol}}
}
\renewcommand{\Re}{
    \text{\normalfont{Re}}
}
\renewcommand{\Im}{
    \text{\normalfont{Im}}
}
\newcommand{\id}{
    \text{\normalfont{id}}
}
\newcommand{\diam}{
    \text{\normalfont{diam}}
}
\providecommand{\customgenericname}{}
\newcommand{\newcustomtheorem}[2]{%
  \newenvironment{#1}[1]
  {%
   \renewcommand\customgenericname{#2}%
   \renewcommand\theinnercustomgeneric{##1}%
   \innercustomgeneric
  }
  {\endinnercustomgeneric}
}
\providecommand{\subtitle}[1]{% add subtitle to \maketitle
  \apptocmd{\@title}{\par {\large #1 \par}}{}{}
}
\def\mathcolor#1#{\@mathcolor{#1}}
\def\@mathcolor#1#2#3{%
  \protect\leavevmode
  \begingroup
    \color#1{#2}#3%
  \endgroup
}
\theoremstyle{plain}
\newtheorem{thm}{Theorem}[section]
\newtheorem{lem}[thm]{Lemma}
\newtheorem{propn}[thm]{Proposition}
\newtheorem{cor}[thm]{Corollary}
\theoremstyle{definition}
\newtheorem{defn}[thm]{Definition}
\newtheorem{rmk}[thm]{Remark}
\theoremstyle{remark}
\title{Gromov--Hausdorff continuity of non-K\"ahler Calabi--Yau conifold transitions}
\author{Benjamin Friedman\thanks{Department of Mathematics, UBC, 1984 Mathematics Road,
    Vancouver, Canada, \href{benji@math.ubc.ca}{benji@math.ubc.ca}}\, \quad S\'ebastien Picard\thanks{Department of Mathematics, UBC, 1984 Mathematics Road,
    Vancouver, Canada, \href{spicard@math.ubc.ca}{spicard@math.ubc.ca}}\, \quad Caleb Suan\thanks{Department of Mathematics, CUHK, Lady Shaw Building,
    Shatin, Hong Kong, \href{kwsuan@math.cuhk.edu.hk}{kwsuan@math.cuhk.edu.hk}}}
\date{\today}
\begin{document}
\maketitle

\begin{abstract}
We study the geometry of Calabi--Yau conifold transitions. This deformation process is known to possibly connect a K\"ahler threefold to a non-K\"ahler threefold. We use balanced and Hermitian--Yang--Mills metrics to geometrize the conifold transition and show that the whole operation is continuous in the Gromov--Hausdorff topology.
\end{abstract}
%\tableofcontents

\section{Introduction}
\label{sect-intro}

Our discussion begins with the K\"ahler Calabi--Yau threefold. Our broad goal is to understand the geometric properties of these complex manifolds as they undergo deformation. Various mechanisms for the degeneration and resolution of Calabi--Yau structures exist, and in this work we focus on the conifold transition.

A conifold transition is a process where a birational contraction of holomorphic curves is followed by a deformation of complex structure. We denote a conifold transition by
\[
\what{X} \rightarrow X_0 \rightsquigarrow X_t.
\]
In this process, holomorphic curves in $\what{X}$ are mapped to singular points in the analytic space $X_0$, and the singularities are locally modeled by $0 \in \{ \sum z_i^2 = 0 \} \subset \mathbb{C}^4$. The smoothings $X_t$ deform the complex structure of $X_0$ in a way which is locally modeled by $\{ \sum z_i^2 = t\} \subset \mathbb{C}^4$. 

As the initial threefold $\hat{X}$ is deformed into $X_t$, its Hodge numbers undergo jumps. This implies that distinct threefolds with varying topologies can be interconnected through this deformation process. It is conjectured that all K\"ahler Calabi--Yau threefolds can be linked by conifold transitions \cite{candelasrolling, reid87,green1988,Fri91}, and for an introduction to conifold transitions, we refer readers to \cite{rossi}.

The goal of this work is to identify a suitable sense in which conifold transitions are continuous, even though the Hodge numbers change discretely. This is a well-studied phenomenon in string theory, and there are various string theoretic interpretations \cite{greene95, strominger95, CdlO90,anderson2022} of the smooth interpolation of string theory through topological change of Calabi--Yau threefolds. From our perspective as differential geometers, we endow the Calabi--Yau threefolds with special Riemannian metrics and study their degenerations through conifold transitions. 

K\"ahler metrics are not suitable for this purpose, as a conifold transition may connect a projective threefold to a non-K\"ahler complex manifold. A simple example is given by letting $\what{X}$ be a smooth quintic threefold. In this case, $b_2(\what{X})=1$, and so once holomorphic curves are contracted, the resulting manifolds $X_t$ have $b_2(X_t)=0$. For a more in-depth discussion of this example, readers are directed to \cite{Fri91}.

There are nevertheless many examples where the resulting manifold $X_t$ does admit a K\"ahler structure, and in this case there exists a significant body of literature dedicated to understanding the degeneration and resolution process via K\"ahler Ricci-flat metrics. For the local model of the conifold transition, families of Calabi--Yau metrics were constructed by Candelas--de la Ossa \cite{CdlO90}. On compact K\"ahler threefolds, K\"ahler Ricci-flat metrics exist by Yau's theorem \cite{yau78}, and the challenge is to carry these metrics through a conifold transition. The work of Ruan--Zhang \cite{RZ11b}, Rong--Zhang \cite{RZ11} and J. Song \cite{Song14, Song15} give the existence of a sequence
\[
(\what{X}, \what{g}_a) \rightarrow (X_0, d_0) \leftarrow (X_t,g_t)
\]
where the metrics $\what{g}_a$ $g_t$ are smooth K\"ahler Ricci-flat metrics converging in the Gromov--Hausdorff topology. The limiting length space $(X_0,d_0)$ is the metric completion of $(X_{\rm reg},g_0)$, where $g_0$ is a singular Calabi--Yau metric constructed by Eyssidieux--Guedj--Zeriahi \cite{egz}. The metrics $\what{g}_a$ on the small resolution converge smoothly uniformly on compact sets away from the exceptional curves by work of Tosatti \cite{tosatti2009}, and the metrics $g_t$ on the smoothings converge smoothly uniformly on compact sets away from the singularities by work of Ruan-Zhang \cite{RZ11b} and Rong-Zhang \cite{RZ11}. 

 For a survey on degenerations of Calabi--Yau metrics, we refer to \cite{tosatti2018}, and for recent work on understanding Calabi--Yau metrics near isolated singularities we refer to \eg \cite{DS2,heinsun,di2022families,xinfu,chiusze} and references therein.

The current work takes initial data to be a K\"ahler threefold $\what{X}$ and investigates conifold transitions emanating from $\what{X}$ without imposing a priori assumptions on the resulting space $X_t$. This setup has the implication that $X_t$ may or may not be K\"ahler. Instead of relying on K\"ahler Ricci-flat metrics, the idea in \cite{FLY12,CPY21} is to geometrize the conifold transition by a pair of metrics:
\[
(\what{X}, \what{g}_a, \what{H}_a) \rightarrow (X_0, d_{g_0}, d_{H_0}) \leftarrow (X_t,g_t,H_t).
\]
Here $(g,H)$ is a pair of metrics on $T^{1,0}X$ solving
\begin{equation} \label{pair-eqn}
d \omega^2 = 0, \quad F_H \wedge \omega^2 = 0,
\end{equation}
where $\omega = i g_{j \bar{k}} dz^j \wedge d \bar{z}^k$. The balanced metrics $\what{g}_a$ and $g_t$ were constructed by Fu--Li--Yau \cite{FLY12}. The Hermitian-Yang--Mills metrics $\what{H}_a$ and $H_t$ were constructed by Collins, Yau and the second named author \cite{CPY21}. These non-K\"ahler equations are suggested by string theory \cite{strominger1986}, and proposed by S.-T. Yau and collaborators to geometrize conifold transitions \cite{liyau05,FLY12,CPY21,CPY23}. Near the ordinary double points, both metrics are close to the Candelas--de la Ossa \cite{CdlO90} K\"ahler Ricci-flat local models, but there are also global non-K\"ahler corrections. In other words, $g=H$ solves \eqref{pair-eqn} when they are both equal to a K\"ahler Ricci-flat metric, and although the global geometry is necessarily non-K\"ahler, the solution of \cite{FLY12,CPY21} approximately returns to the K\"ahler Ricci-flat solution on the local model.

\begin{rmk}
There is a third equation constraining $(g,H)$ which appears in heterotic string theory. This additional equation, named the anomaly cancellation equation, is conjectured to be solvable through conifold transitions \cite{liyau05,yaushape,FLY12} (see also e.g. \cite{dlO-Svanes,garciasurvey,garciamolina,TsengYau,picardsurvey} for a mathematical introduction to these equations). It is further conjectured that the pair $(g,H)$ can be rigidified in a suitable notion of cohomology class by a uniqueness property once this additional equation is imposed \cite{garcia2022canonical}. 
\end{rmk}

\begin{rmk}
Another approach to geometrizing conifold transitions via Chern-Ricci flat balanced metrics is proposed in \cite{tosatti2015non,toswein17,FWW} with recent progress by Giusti-Spotti \cite{giusti2023ak}. We also remark that the anomaly flow \cite{PPZCAG, PPZMathZ} is another mechanism for creating a canonical path of balanced metrics which has not yet been understood in the context of conifold transitions. 
\end{rmk}

Our main result is:

\begin{thm}
Let $\what{X}$ be a compact K\"ahler Calabi--Yau threefold with finite fundamental group. Let $\what{X} \rightarrow X_0 \rightsquigarrow X_t$ be a conifold transition. There exists a family of smooth metrics $(\what{X},\what{g}_a,\what{H}_a)$ for $0<a<1$ and $(X_t,g_t,H_t)$ for $0<|t|<\epsilon$ solving
\begin{equation*}
d \omega^2=0, \quad F_H \wedge \omega^2= 0
\end{equation*}
such that as the parameters $a$ and $t$ are varied, the geometries $(X,\what{g}_a,\what{H}_a)$ and $(X_t,g_t,H_t)$ vary continuously in the Gromov-Hausdorff sense and
\begin{align*}
& (\what{X}, \what{g}_a) \rightarrow (X_0,d_{g_0}) \leftarrow (X_t,g_t) \nonumber\\
& (\what{X}, \what{H}_a) \rightarrow (X_0,d_{H_0}) \leftarrow (X_t,H_t)
\end{align*}
as $a, t \rightarrow 0$ in the Gromov--Hausdorff topology. The length spaces $(X_0,d_{g_0})$ and $(X_0,d_{H_0})$ are induced by a limiting Hermitian--Yang--Mills structure on $((X_0)_{\mathrm{reg}},\omega_0,H_0)$.
\end{thm}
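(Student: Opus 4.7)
The plan is to exploit the explicit structure of the balanced metrics $\what{g}_a, g_t$ of \cite{FLY12} and the Hermitian--Yang--Mills metrics $\what{H}_a, H_t$ of \cite{CPY21}, which are built as perturbations of a gluing between fixed global data on $X_0$ and the Candelas--de la Ossa local models that shrink around the ODPs. Gromov--Hausdorff convergence will follow by combining smooth convergence on compacta away from the singular set with a diameter collapse estimate on the neck regions. First I would establish the limiting regular geometry: on any fixed compact $K \subset (X_0)_{\mathrm{reg}}$, pulled back to $\what{X}$ via the contraction and to $X_t$ via the smoothing, the pairs $(\what{g}_a, \what{H}_a)$ and $(g_t, H_t)$ converge smoothly as $a,t \to 0$ to a limiting pair $(g_0, H_0)$ satisfying $d \omega_0^2 = 0$ and $F_{H_0} \wedge \omega_0^2 = 0$. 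This follows because the perturbation terms in \cite{FLY12, CPY21} are supported on a shrinking neighborhood of the singular set up to small errors, and standard elliptic regularity for the perturbation equations controls these errors uniformly on $K$.

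\textbf{Neck collapse and assembly.} Using the Candelas--de la Ossa local model and the fact that $\what{g}_a, g_t$ differ from it by subleading corrections on the neck scale, I would show that for every $\epsilon > 0$ there is a neighborhood $U_\epsilon$ of the singular set in $X_0$ whose preimage components in $\what{X}$ and in $X_t$ each have diameter less than $\epsilon$, uniformly in small $a,t$. A parallel statement holds for $H$ because on the local model it is close to $g$. Given $\epsilon > 0$, choosing $U_\epsilon$ as above and letting $K = X_0 \setminus U_\epsilon$, the smooth convergence step produces diffeomorphisms $K \hookrightarrow \what{X}, X_t$ distorting the metric by at most $\epsilon$, and combined with the neck diameter bound this yields an $\epsilon$-isometry onto the metric completion of $((X_0)_{\mathrm{reg}}, g_0)$ (respectively of the $H_0$-Riemannian metric), where each ODP becomes a single point. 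These completions are precisely the length spaces $(X_0, d_{g_0})$ and $(X_0, d_{H_0})$ in the statement. For continuity at $a_0 \in (0,1)$ or $t_0 \neq 0$, smooth dependence of the FLY and CPY constructions on the parameter upgrades directly to GH continuity on the compact manifolds $\what{X}$ and $X_t$.

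\textbf{Main obstacle.} The hard part is the neck diameter collapse, which must be established uniformly in the parameter. The Candelas--de la Ossa model is K\"ahler Ricci-flat with well-understood diameter scaling, but $(\what{g}_a, \what{H}_a)$ and $(g_t, H_t)$ carry global non-K\"ahler corrections coming from solving the balanced and HYM equations on the compact threefold. Showing that these corrections are subleading on the neck scale, so that the neck diameter vanishes at the expected rate and does not percolate outside $U_\epsilon$, requires a weighted estimate leveraging the perturbative nature of the constructions in \cite{FLY12, CPY21}; in particular one must verify that the HYM metric $H$ stays close to the local Candelas--de la Ossa metric on the neck even though the equation $F_H \wedge \omega^2 = 0$ is coupled to the ambient balanced geometry.
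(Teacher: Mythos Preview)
Your proposal is correct and follows essentially the same architecture as the paper: smooth convergence on compacta away from the singular set, a uniform diameter collapse on the necks coming from the Candelas--de la Ossa scaling, and an assembly into an $\epsilon$-isometry (the paper packages this last step as a general ``main lemma'' together with a curve-reduction device to control how geodesics interact with the bad sets).

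One point worth flagging: the ``main obstacle'' you anticipate is in fact lighter than you suggest. On the small resolution the Fu--Li--Yau metric is \emph{exactly} a rescaled Candelas--de la Ossa metric on the neck (not merely approximately), so no weighted estimate is needed there; on the smoothing side the FLY correction is controlled by an explicit $|t|^{2/3}$ bound. For the Hermitian--Yang--Mills metrics, the paper does not prove closeness to the local model on the neck at all, but instead invokes the uniform sandwich $C^{-1} g_{\FLY} \leq H \leq C g_{\FLY}$ already established in \cite{CPY21}, which transfers the neck diameter bound from $g$ to $H$ for free.
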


Our proof begins by analyzing the local models, which are K\"ahler Ricci-flat metrics on the small resolution and smoothing of the affine cone $\{ \sum z_i^2 = 0 \} \subset \mathbb{C}^4$. Once the local models are understood, we move on to the global balanced and Hermitian-Yang-Mills structures $(g,H)$. These global metrics add non-K\"ahler corrections to the K\"ahler Ricci-flat local models by solving a PDE on the global compact manifold: for the balanced metrics $\omega$, the PDE involves the 4th order Kodaira--Spencer operator, while for the metric $H$, the PDE is the Hermitian--Yang--Mills equation. Our analysis relies on suitable estimates for these equations along degenerations. To obtain continuity at $a=t=0$, the main step is to obtain diameter bounds tending to zero near the singular points, and the general approach is in the style of Song-Weinkove \cite{SW13,SW14}, where exceptional sets are contracted along a sequence of metrics.

\subsection*{Acknowledgements}
This research is supported in part by an NSERC Discovery Grant. The authors thank J. Bryan, J. Chen, and A. Fraser for helpful discussions. We also thank the referee for helpful comments and suggestions. S.P. thanks T.C. Collins and S.-T. Yau for previous collaborations on conifold transitions.

\section{Preliminaries}
\label{sect-prelims}

\subsection{The Gromov--Hausdorff Topology}
\label{subsect-GH-top'y}

The Gromov--Hausdorff topology was introduced in 1975 by Edwards \cite{Edw75}, and was then independently rediscovered by Gromov in the 1980's. Since then, it has been an indispensable tool in geometry. There has been growing interest in applications of the Gromov--Hausdorff topology to Calabi--Yau manifolds starting with the work of Gross--Wilson \cite{gross2000}, and we note in particular the use of this topology in studying the continuity of conifold transitions of Calabi--Yau threefolds (see \cite{RZ11, RZ11b, Song15}).

We will now introduce certain definitions and notions pertaining to Gromov--Hausdorff convergence of compact metric spaces. Other sources for this material include \eg \cite{BBI,Gro07,gross2000,Edw75,petersen2006}. We will implicitly assume that all our metric spaces in this section are compact, though generalizations exist for the non-compact case (\cf\,pointed Gromov--Hausdorff convergence).

Let $(X,d)$ be a compact metric space. For $A \subseteq X$ and $\eps > 0$, we set
\begin{equation*}
    B_\eps(A) = \bigcup_{x \in A} B_\eps(x),
\end{equation*}
where $B_\eps(x) = \{x' \in X \mid d(x',x) < \eps\}$ is the ball of radius $\eps$ around $x$.

\begin{defn}
\label{defn-eps-isom}
Let $(X,d_X)$ and $(Y,d_Y)$ be compact metric spaces and $\eps > 0$. A map $f \: X \rarr Y$ is called an $\eps$-isometry if
\begin{enumerate}[label = \roman*)]
    \item $|d_X(x,x') - d_Y(f(x),f(x'))| < \eps$ for all $x,x' \in X$, and

    \item $Y \subseteq B_\eps(f(X))$.
\end{enumerate}
\end{defn}

In general, $\eps$-isometries need not be injective or even continuous.

\begin{defn}
\label{defn-GH-dist}
The Gromov--Hausdorff distance $d_{\mathrm{GH}}$ between two compact metric spaces $(X,d_X)$ and $(Y,d_Y)$ is
\begin{equation*}
    d_{\mathrm{GH}}(X,Y) = \inf \{ \eps > 0 \mid \text{There exist } \eps\text{-isometries } f_1 \: X \rarr Y, \, f_2 \: Y \rarr X \}.
\end{equation*}
\end{defn}

The Gromov--Hausdorff distance $d_{\mathrm{GH}}$ defines a metric, and hence a topology, on the set $\cal{M}$ of isometry classes of compact metric spaces.

\begin{rmk}
\label{rmk-GH-symmetry}
We note that only one of the $\eps$-isometries in the previous definition is required as given an $\eps$-isometry $f_1 \: X \rarr Y$, one can construct a $3\eps$-isometry $f_2 \: Y \rarr X$. This in essence scales the Gromov--Hausdorff metric $d_{\mathrm{GH}}$ by a factor of $3$, but both generate the same topology on $\cal{M}$.
\end{rmk}

\subsection{Conifold Transitions}
\label{subsect-cfld-Trans}

Conifold transitions describe a process wherein one Calabi--Yau threefold is gradually deformed into another, passing through an intermediate space having cone singularities (\ie\,a conifold).

We briefly review certain facts about the geometry of conifold transitions. The exposition here will follow \cite{CPY21, FLY12, Fri91}. We begin with some definitions to fix the set-up of this document.

\begin{defn}
\label{defn-CY-3-fold}
A K\"ahler Calabi--Yau threefold $\what{X}$ is a compact complex manifold of complex dimension 3 with finite fundamental group, trivial canonical bundle, and admitting a K\"ahler metric.
\end{defn}

\begin{defn}
\label{defn-(-1,-1)-curve}
A $(-1,-1)$-curve $E \in \what{X}$ is a smooth rational curve $E \simeq \bb{P}^1$ such that the normal bundle $N_{E/\what{X}} \simeq \cal{O}(-1) \oplus \cal{O}(-1)$.
\end{defn}

Around a $(-1,-1)$-curve $E$, there exists an open neighbourhood $\what{U}$ which is biholomorphic to a neighbourhood of the zero section in the total space of the bundle
\begin{equation*}
    \what{V} = \cal{O}(-1) \oplus \cal{O}(-1) \rarr \bb{P}^1.
\end{equation*}
Given a collection of disjoint $(-1,-1)$ curves $\{ E_i \} \subset \what{X}$, we may contract them to points by a blowdown map $\pi \: \what{X} \rarr X_0$, where $X_0$ is a singular space with isolated singular points at $s_i = \pi(E_i)$.

In more detail, we identify a neighborhood of $E_i$ with the model space $\what{V}$ and the blowdown map $\pi$ sends the complement of the zero section biholomorphically to the complement of the origin in the conifold
\begin{equation*}
    V_0 = \Big\{z \in \bb{C}^4 \mid \sum_i z_i^2 = 0 \Big\}.
\end{equation*}
This map $\pi$ can be holomorphically extended to all of $\what{V}$ by sending the zero section to the origin in $V_0$. The map $\pi$ near $E_i$ can be made explicit and we give the expression later in \eqref{pi-defn}. The result is that $X_0$ has isolated ordinary double point (ODP) singularities $s_i$ with local neighborhoods biholomorphic to $0 \in V_0$.

We next discuss how to smooth the singular space $X_0$ by deforming its holomorphic structure. The local model is a singular variety $V_0$ which can be smoothed by considering the space
\begin{equation*}
    \cal{V} = \Big\{ (z,t) \in \bb{C}^4 \times \bb{C} \mid \sum_i z_i^2 = t \Big\}.
\end{equation*}
The fiber over $t$ is denoted $V_t$ (considered as a subset of $\bb{C}^4$) and is smooth for all $t \neq 0$.
\begin{equation*}
    V_t = \Big\{z \in \bb{C}^4 \mid \sum_i z_i^2 = t \Big\}.
\end{equation*}
This is the local model which we would like to achieve globally on $X_0$. A result of R. Friedman \cite{Fri86} gives a condition describing the existence of a smoothing.

\begin{figure}
    \centering
    \def\svgwidth{1.1\columnwidth}
    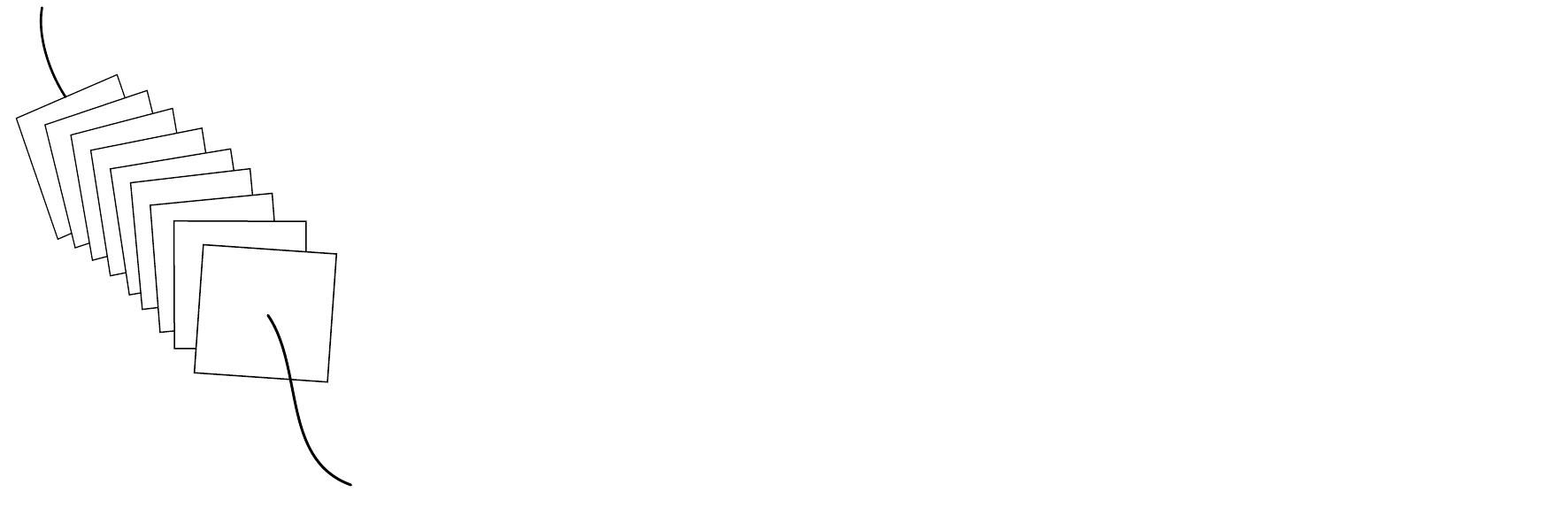
    \caption{Local model of a conifold transition.}
    \label{fig-cfld-trans}
\end{figure}

\begin{thm}[R. Friedman \cite{Fri86,Fri91}]
\label{thm-smthing}
Suppose $\what{X}$ is a K\"ahler Calabi--Yau threefold and let $E_1,\ldots,E_k$ be disjoint $(-1,-1)$-curves. Let $\pi$ be the blowdown map that contracts each $E_i$, resulting in the singular space $X_0$ with ODP singularities $s_i = \pi(E_i)$. There exists a first order deformation of $X_0$ smoothing each $s_i$ if and only if there exists a relation
\begin{equation*}
    \sum_i \lambda_i [E_i] = 0 \text{ in } H^2(\what{X},\bb{R})
\end{equation*}
with each $\lambda_i \neq 0$.
\end{thm}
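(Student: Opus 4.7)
The plan is to approach the problem via deformation theory of the singular space $X_0$, identifying the obstruction to simultaneous smoothing of the $k$ ordinary double points as a topological condition on the classes $[E_i]$. First-order deformations of $X_0$ are classified by $\text{Ext}^1_{X_0}(\Omega^1_{X_0}, \cal{O}_{X_0})$, and the local-to-global spectral sequence for $\text{Ext}$ produces the exact sequence
\begin{equation*}
H^1(T_{X_0}) \longrightarrow \text{Ext}^1(\Omega^1_{X_0}, \cal{O}_{X_0}) \longrightarrow H^0(\mathcal{E}xt^1(\Omega^1_{X_0},\cal{O}_{X_0})) \stackrel{\partial}{\longrightarrow} H^2(T_{X_0}).
\end{equation*}
A standard local computation at an ODP shows that $\mathcal{E}xt^1(\Omega^1_{X_0},\cal{O}_{X_0})$ is a skyscraper on $\{s_1,\ldots,s_k\}$ with each stalk canonically isomorphic to $\bb{C}$, and a global first-order deformation smooths $s_i$ precisely when its image in the $i$-th stalk is nonzero. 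The theorem thus reduces to identifying $\ker{\partial} \subset \bb{C}^k$ and showing that it coincides with the space of real relations $\sum_i \lambda_i [E_i] = 0$.

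Next I would compute $H^2(T_{X_0})$ via the small resolution $\pi \colon \what{X} \to X_0$. Since each exceptional curve has normal bundle $\cal{O}(-1) \oplus \cal{O}(-1)$, the formal function theorem combined with the vanishing $H^1(\bb{P}^1,\cal{O}(-1)\oplus\cal{O}(-1))=0$ gives $R^q \pi_* T_{\what{X}} = 0$ for $q \geq 1$, while $\pi_* T_{\what{X}} = T_{X_0}$ as reflexive sheaves on the Cohen--Macaulay variety $X_0$. The Leray spectral sequence therefore yields $H^2(T_{X_0}) \cong H^2(\what{X}, T_{\what{X}})$. The Calabi--Yau condition supplies an isomorphism $T_{\what{X}} \cong \Omega^2_{\what{X}}$ via contraction with the holomorphic volume form, so $H^2(T_{X_0}) \cong H^{2,2}(\what{X})$, which Serre duality in turn identifies with $H^{1,1}(\what{X})^*$.

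The hard part, and the heart of the argument, is identifying the boundary map $\partial \colon \bb{C}^k \to H^{1,1}(\what{X})^*$ with integration against the exceptional curves, namely the functional $(\lambda_i) \mapsto \bigl([\alpha] \mapsto \sum_i \lambda_i \int_{E_i} \alpha\bigr)$. This requires carefully unwinding the connecting homomorphism of the local-to-global spectral sequence: a local smoothing direction $\lambda_i$ is realised by a {\v C}ech $1$-cocycle for $T_{X_0}$ on the overlap of a punctured neighbourhood of $s_i$ with the regular locus, built explicitly from the comparison between $V_0$ and $V_t$ as in Candelas--de la Ossa; pairing this cocycle with a $(1,1)$-form $\alpha$ through the Leray isomorphism, Serre duality, and a local residue computation on the resolved conifold produces $\lambda_i \int_{E_i} \alpha$. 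Granting this identification, $\ker{\partial}$ is exactly the annihilator of the image of the restriction map $H^{1,1}(\what{X}) \to \bb{C}^k$, $\alpha \mapsto (\int_{E_i} \alpha)_i$, which by Poincar\'e duality coincides with $\{(\lambda_i) : \sum_i \lambda_i [E_i] = 0 \text{ in } H^2(\what{X},\bb{R})\}$; imposing the condition that all $\lambda_i$ be nonzero yields the theorem.
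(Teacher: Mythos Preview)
The paper does not give its own proof of this statement: Theorem~\ref{thm-smthing} is quoted from Friedman's work \cite{Fri86,Fri91} as background, and the paper only remarks afterwards that the first-order deformations integrate to genuine smoothings by \cite{Kaw92,Ran92,Tia92}. There is therefore nothing in the paper to compare your argument against.

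That said, your sketch is essentially Friedman's original argument, and the architecture is correct: the local-to-global $\mathrm{Ext}$ sequence, the identification of $H^2(T_{X_0})$ with $H^{2,2}(\what{X})$ via Leray and the Calabi--Yau isomorphism $T_{\what{X}}\cong\Omega^2_{\what{X}}$, and the reduction to the kernel of the boundary map are all standard and sound. The step you flag as the ``hard part''---identifying $\partial$ with the map $(\lambda_i)\mapsto\sum_i\lambda_i\int_{E_i}(-)$---is indeed where the real work lies, and your description of it remains a sketch rather than a proof; Friedman carries this out via a careful comparison of local and semi-local cohomology near the nodes. One small point to tidy: the deformation-theoretic kernel lives in $\bb{C}^k$, while the statement is about relations in $H^2(\what{X},\bb{R})$. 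Since the classes $[E_i]$ are integral, any complex relation has real and imaginary parts that are themselves relations, so a $\bb{C}$-relation with all $\lambda_i\neq 0$ yields a real one with the same property (and conversely); this passage should be made explicit.
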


It has been shown that the first order deformations from the above theorem integrate to genuine smoothings; see \cite{Kaw92, Ran92, Tia92}. Thus assuming the condition of Theorem \ref{thm-smthing} holds, we get a holomorphic familily
\begin{equation*}
    \mu \: \cal{X} \rarr \Delta 
\end{equation*}
where $\Delta \subset \mathbb{C}$ denotes the unit complex disk such that the fibers $X_t = \mu^{-1}(t)$ are smooth complex manifolds for $t \neq 0$ and $X_0 = \mu^{-1}(0)$. A result of Kas--Schlessinger \cite{KS72} shows that the family $\cal{X}$ is locally biholomorphic to the model $\cal{V}$ near each ODP. It can be shown that the complex manifolds $X_t$ have trivial canonical bundle; see \cite{Fri86} for an algebraic proof or \cite{CPY23} for a differential geometric proof.

\begin{defn}
\label{defn-cfld-trans}
Let $\what{X}$ be a K\"ahler Calabi--Yau threefold. A conifold transition starting from $\what{X}$, denoted $\what{X} \rarr X_0 \rightsquigarrow X_t$, consists of a holomorphic map $\pi \: \what{X} \rarr X_0$ and a family $\mu \: \cal{X} \rarr \Delta$ with $X_0 = \mu^{-1}(0)$ such that
\begin{enumerate}[label = \roman*)]
    \item the map $\pi \: \what{X} \rarr X_0$ contracts a collection of disjoint $(-1,-1)$-curves $E_1,\ldots,E_k \subseteq \what{X}$ to isolated ODP singularities $s_1,\ldots,s_k \in X_0$, and $\pi$ is a biholomorphism between $\what{X} \bs (E_1 \cup \ldots E_k)$ and $X_0 \bs \{s_1,\ldots,s_k\}$; and

    \item the total space $\cal{X}$ is a smooth complex fourfold with a proper flat map $\mu \: \cal{X} \rarr \Delta$, where $X_0 = \mu^{-1}(0)$ and $X_t = \mu^{-1}(t)$ are smooth complex manifolds for $t \neq 0$.
\end{enumerate}
\end{defn}

It is known that the K\"{a}hler condition is not necessarily preserved along a conifold transition. For a concrete example, suppose $\what{X}$ is a quintic threefold and choose a pair of disjoint $(-1,-1)$-curves $E_1$, $E_2$ (for the existence of such curves, see e.g. \cite{clemens1983}). Since $b_2(\what{X})=1$, these satisfy Friedman's condition. Thus a conifold transition exists, and since the generator of second homology has been sent to a point, we have $b_2(X_t)=0$. We see that $X_t$ may not support any K\"ahler metric, even if the initial $\what{X}$ is a projective threefold. For further examples of K\"ahler to non-K\"ahler conifold transitions we refer to \cite{Fri91,lutian}, and for the study of Hodge structures through such a process, see \cite{Fri19}.

To geometrize the parameter space of Calabi--Yau threefolds connected by conifold transitions, we must therefore look for special non-K\"ahler metrics. This program was initiated by Fu--Li--Yau \cite{FLY12}. The inspiration comes from supersymmetric constraints in string theory. K\"ahler Calabi--Yau metrics satisfy the system of supersymmetric constraints when the $H$-flux is taken to be zero \cite{CHSW}. As pointed out in \eg \cite{reid87}, Chapter 4 of \cite{hubschbesti}, or Section 6 of \cite{COGP}, a degeneration and resolution may connect a K\"ahler threefold to a non-K\"ahler space, and so it is necessary to look for more general solutions to the supersymmetric constraints with non-zero $H$-flux. These constraints were worked out by Strominger \cite{strominger1986} and imply the following two equations:

\begin{itemize}
    \item $X$ admits a balanced metric $\omega$. A Hermitian metric $g$ on $T^{1,0}X$ over a complex manifold $X$ of dimension $n$ is balanced if
    \begin{equation} 
    \label{defn-balanced}
        d \omega^{n-1} = 0.
    \end{equation}
    Here $\omega$ is the $(1,1)$-form associated to $g$ via $\omega= \sqrt{-1} g_{j \bar{k}} dz^j \wedge d \bar{z}^k$. Various properties of balanced metrics were explored by Michelsohn \cite{Michelsohn}.

    \item $X$ admits a Hermitian--Yang--Mills metric. A Hermitian metric $H$ on $T^{1,0}X$ is Hermitian--Yang--Mills with respect to a balanced metric $\omega$ if
    \begin{equation} 
    \label{defn-HYM}
        F \wedge \omega^{n-1} = 0.
    \end{equation}
    The Chern curvature of a metric $H$ is denoted by $F \in \Lambda^{1,1}({\rm End} \, T^{1,0}X)$, and is given by $F=\bar{\partial}(\partial H H^{-1})$. The criterion for the solvability of this equation over a general holomorphic bundle is given by the Donaldson--Uhlenbeck--Yau theorem \cite{donaldson1985,uhlenbeckyau86} in the K\"ahler case, with an extension by Li--Yau \cite{liyau87} for non-K\"ahler metrics.
\end{itemize}

When $X$ is a K\"ahler threefold, Yau's theorem \cite{yau78} gives the existence of a K\"ahler Ricci-flat metric $g_{\rm CY}$. The above equations are then solved with $g=H=g_{\rm CY}$. More generally, let $X_t$ be a complex manifold connected to a K\"ahler threefold via a conifold transition $\what{X} \rightarrow X_0 \rightsquigarrow X_t$. The main results of \cite{FLY12} and \cite{CPY21} give the existence of a pair $(g,H)$ solving the supersymmetric equations \eqref{defn-balanced} and \eqref{defn-HYM}. 

\begin{rmk}
\label{rmk-aux-gauge-bdle}
    The Hermitian--Yang--Mills equation may in principle also be solved over an auxiliary gauge bundle $E$, but the mechanism under which a conifold transition creates a stable holomorphic vector bundle $E_t \rightarrow X_t$ is not understood except for the case at hand, which is when $E_t = T^{1,0} X_t$. There is a proposal by Anderson--Brodie--Gray \cite{anderson2022} in the string theory literature on how such general bundles may appear on the other side. There is also work of Chuan \cite{Chuan} on the Hermitian--Yang--Mills equation on a gauge bundle $E$ with the additional assumption that $E$ is locally a trivial bundle through the singularities of a conifold transition.
\end{rmk}

In the remainder of this preliminary section, we recall various metrics which can be defined both globally and on the local models, and state the main results of this paper, which state that conifold transitions, when bestowed with these metrics, describe a continuous path in $\mathcal{M}$ with respect to the Gromov--Hausdorff topology.

\subsection{Metrics on Small Resolutions}
\label{subsect-Metrics-SR}

\subsubsection{Candelas--de la Ossa Metrics on the Local Model}
\label{sect-CdlO-SR}

Consider the space $\what{V}$, which is the total space of the bundle
\begin{equation*}
    \what{V} = \cal{O}(-1) \oplus \cal{O}(-1) \rarr \bb{P}^1.
\end{equation*}
On this space, we have two trivializations
\begin{equation*}
    (U, (\lambda, u,v)) \text{ and } (U', (\lambda', u', v')),
\end{equation*}
with transition functions given by
\begin{equation*}
    \lambda' = \lambda^{-1}, \, u' = \lambda u, \, v' = \lambda v.
\end{equation*}
Note that $\lambda$ is the coordinate on the base space $\mathbb{P}^1$, while $u,v$ are fibre coordinates.

It will be convenient to define the well-defined radius function $r: \what{V} \rightarrow [0,\infty)$ given by
\begin{equation*}
    r(\lambda, u, v) = (1 + |\lambda|^2)^{\frac{1}{3}} (|u|^2 + |v|^2)^{\frac{1}{3}}.
\end{equation*}
Without the power of $\frac{1}{3}$, this function measures the distance squared from a point to the zero section $E \simeq \bb{P}^1$ along the fibers using the Fubini--Study metric $\what{\omega}_{FS}$. The power is introduced so that this radius function coincides with the radius of the Calabi--Yau cone metric on the blowdown, and this will be discussed later.

The space $(\what{V}, r)$ is also equipped with a family of scaling maps. Namely, for $R>0$, we have the map $S_R: \what{V} \rightarrow \what{V}$ given by
\begin{equation*}
    S_R(\lambda,u,v) = (\lambda,R^{\frac{3}{2}}u, R^{\frac{3}{2}} v).
\end{equation*}
The radius behaves as it should under the scaling, as we have
\begin{equation*}
    r \circ S_R = R \cdot r.
\end{equation*}
In \cite{CdlO90}, Candelas--de la Ossa look for a K\"ahler Ricci-flat metric $\what{\omega}_{\co,1}$ on $\what{V}$ of the form
\begin{equation*}
    \what{\omega}_{\co,1} = \sqrt{-1} \partial \overline{\partial} f(r^3) + 4 \what{\omega}_{\mathrm{FS}},
\end{equation*}
where $\what{\omega}_{\mathrm{FS}}$ is the Fubini--Study metric on $\mathbb{P}^1$, and $f(x)=f(r^3)$ is a smooth function. They show that imposing the condition of K\"ahler--Ricci flatness yields the following first order ODE for $f$:
\begin{equation*}
x(f'(x))^3+6(f'(x))^2=1.
\end{equation*}
The solution admits an expansion \cite{CPY21} for $x \gg 1$ given in terms of $r=x^{\frac{1}{3}}$ by
\begin{equation*}
f =c_0 r^2 + c_1 \log r + c_2 r^{-2} + c_3 r^{-4} + \cdots
\end{equation*}
for constants $c_0, c_1, c_2, \cdots$. Thus, after rescaling $\what{\omega}_{\co,1}$ such that $c_0 = \frac{1}{2}$, we have the following expansion for large radius $r \gg 1$:
\begin{equation} 
\label{co1-expansion}
    \what{\omega}_{\co,1} - \frac{1}{2} \sqrt{-1} \partial \bar{\partial} r^2 = c_{-1} \what{\omega}_{\mathrm{FS}} + c_1 \sqrt{-1} \partial \bar{\partial} \log r + c_2 \sqrt{-1} \partial \overline{\partial} r^{-2} + c_3 \sqrt{-1} \partial \overline{\partial} r^{-4} + \ldots.
\end{equation}
Next, we note that the space $\what{V}$ can be regarded as a small blow-up of the space
\begin{equation*}
    V_0 = \Big\{ z \in \bb{C}^4 \mid \sum_i z_i^2 = 0 \Big\}.
\end{equation*}
There is a (scaled) blow-down map $\pi: \what{V} \rightarrow V_0$ such that $\pi^{-1}(0)$ is the zero section $E=\{r=0\}$, and the restriction
\begin{equation*}
    \pi: \hat{V} \, \backslash \, E \rightarrow V_0 \, \backslash  \,\{ 0 \}
\end{equation*}
is biholomorphic. The map $\pi$ has the explicit expression
\begin{equation}
\label{pi-defn}
    \pi (\lambda, u, v) = \Bigg( \frac{\lambda v + u}{\sqrt{2}}, -\sqrt{-1} \cdot \frac{\lambda v - u}{\sqrt{2}}, -\sqrt{-1} \cdot \frac{v + \lambda u}{\sqrt{2}}, -\frac{v - \lambda u}{\sqrt{2}} \Bigg).
\end{equation}
Likewise, away from the singularity at the origin, we can see that
\begin{equation*}
    \pi^{-1} (z_1, z_2, z_3, z_4) = \Bigg( \frac{z_1 - \sqrt{-1} z_2}{\sqrt{2}}, \frac{\sqrt{-1} (z_3 + \sqrt{-1} z_4)}{\sqrt{2}}, \sqrt{-1} \cdot \frac{z_3 - \sqrt{-1} z_4}{z_1 - \sqrt{-1} z_2} \Bigg).
\end{equation*}

%     \begin{figure}
%     \centering
% \def\svgwidth{1.1\columnwidth}
%     \input{blow-down5.pdf_tex}
%     \caption{The blow-down map $\pi: \what{V} \to V_0$ maps the zero section $E := \{r=0\}$ of $\what{V}$ to the cone singularity of $V_0$.}
% \end{figure}

The function $r$ on $\what{V}$ becomes $\| z \|^{\frac{2}{3}}$ on $V_0$ via the identification $\pi$, in the sense that $r(\lambda, u, v) = \| \pi (\lambda, u, v)\|^{\frac{2}{3}}$.
For this reason, we will also denote
\begin{equation*}
r = \| z \|^{\frac{2}{3}}    , \quad r: V_0 \rightarrow [0,\infty).
\end{equation*}
The space $V_0$ admits a Calabi--Yau cone metric
\begin{equation*}
    \omega_{\co,0} = \frac{1}{2} \sqrt{-1} \partial \bar{\partial} r^2.
\end{equation*}
We briefly discuss the cone metric geometry on $(V_0, \omega_{\co,0})$. Observe that $V_0$ is closed under scalar multiplication and addition, so that $V_0$ is a cone. The metric $\omega_{\co,0}$ is well-known \cite{CdlO90} to be K\"ahler Ricci-flat and is a cone metric over the link
\begin{equation*}
    L = \{z \in V_0 \mid r(z) = 1\},
\end{equation*}
and we may write
\begin{equation}
\label{eqn-cone-metric}
    g_{\co,0} = dr \otimes dr + r^2 \cdot g_L,
\end{equation}
where $g_L$ is the pullback of a metric on $L$.

The link $L$ is $S^3 \times S^2$. To see this, we express the defining condition $\sum_i z_i^2 = 0$ of $V_0$ in real coordinates $x_i, y_i$ such that $z_i = x_i + \sqrt{-1}y_i$ for each $i \in \{1,2,3,4\}$. We obtain
\[
0 = \norm{x}^2 - \norm{y}^2 +2\sqrt{-1}\langle x, y \rangle,
\]
where $x=(x_1,x_2,x_3,x_4) \in \mathbb{R}^4$ and $y=(y_1,y_2,y_3,y_4) \in \mathbb{R}^4$. Expressed in these terms, $V_0$ is the set of all $(x,y) \in \mathbb{R}^8 \cong \mathbb{R}^4 \oplus \mathbb{R}^4$ on which $\norm{x} = \norm{y}$ and $\langle x, y \rangle = 0$. Fixing $r^3=\norm{x}^2 + \norm{y}^2=2$ implies that $\norm{x}=\norm{y}=1$. In particular, we have $x \in S^3 \subset \mathbb{R}^4$. Then for each such choice of $x$, the conditions $\langle x, y \rangle=0$ and $\norm{y} = 1$ imply that $y$ is in the unit $2$-sphere centered at $0$ in the tangent space $T_x S^3$. Thus, the set $\{z \in V_0 \mid r(z)=2^{\frac{1}{3}}\}$ is diffeomorphic to the unit sphere bundle contained in the tangent bundle $TS^3$, which is trivial. Thus $\{z \in V_0 \mid r(z)=2^{\frac{1}{3}}\} \cong S^3 \times S^2$, and by rescaling we have $L \cong S^3 \times S^2$ as well.

Returning to $(\what{V}, \what{g}_{\co,1})$, we can rescale the area of the zero section $E \simeq \mathbb{P}^1$ to obtain a 1-parameter family of metrics. We will refer to this family of metrics as the Candelas--de la Ossa metrics $\what{g}_{\co,a}$ on the small resolution. The metrics $\what{g}_{\co,a}$ satisfy the following important properties:
\begin{enumerate}[label = (CO SR \Roman*), align = left]
    \item \label{prop-SR-I} \textit{Normalization:} For $a > 0$, we have
    \begin{equation*}
        \what{g}_{\co,a} = a^2 \cdot S_{a^{-1}}^* (\what{g}_{\co,1}).
    \end{equation*}
    \item \label{prop-SR-II} \textit{Asymptotically Conical Decay:} There exists $C > 0$ independent of $a$ such that for all $a > 0$,
    \begin{equation*}
        |(\pi^{-1})^* (\what{g}_{\co,a}) - g_{\co,0}|_{g_{\co,0}} \leq C a^2 r^{-2}.
    \end{equation*}
    The asymptotic decay can be derived from \eqref{co1-expansion} for $a=1$. Pulling-back the estimate when $a=1$ by $S^*_a$ gives the estimate for general $a$. The estimate implies that the Candelas--de la Ossa metrics $\what{g}_{\co,a}$ converge uniformly to the cone metric $g_{\co,0}$ on compact sets away from the zero section $E$. 
\end{enumerate}

For $R>0$, we will denote by $\what{T}(R)$ the ``tube''
\[
\what{T}(R) := \{r \leq R\} \subseteq \what{V}
\]
around the zero section $E \simeq \bb{P}^1$.
On the cone $V_0$, we will analogously refer to the ``disc"
\[
D_0(R) := \{r \leq R\} \subseteq V_0.
\]
of radius $R$ around $0$.

Let $\what{d}_{\co,a}$ and $d_{\co,0}$ denote the induced distance functions on $\what{X}$ and $X_0$ by $\what{g}_{\co,a}$ and $g_{\co,0}$ respectively. As warm-up for our proof of the convergence of metrics on the compact threefold, we will present a proof of convergence of the local models.

\begin{thm}
    \label{thm-CdlO-SR-GH}
     The spaces $( \what{T}(1),\what{d}_{\co,a} )$ converge in the Gromov--Hausdorff topology as $a \to 0$ to the space $(D_0(1), d_{\co,0})$.
\end{thm}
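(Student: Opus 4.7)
The natural candidate $\eps$-isometry is the blow-down $\pi \colon \what{T}(1) \to D_0(1)$, which is surjective and restricts to a biholomorphism on the complement of the exceptional curve $E$. The strategy is: the exceptional tube $\what{T}(\delta)$ collapses under $\what{g}_{\co,a}$ as $a \to 0$, while on the outer annulus $\{r \geq \delta\}$ the map $\pi$ is a near-isometry by the asymptotic conical decay (CO SR II). The whole proof reduces to balancing these two scales via a well-chosen cutoff $\delta = \delta(a)$.

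First, I would establish matched diameter bounds for the inner regions. The scaling property (CO SR I) gives $\diam(\what{T}(\delta), \what{g}_{\co,a}) = a \cdot \diam(\what{T}(\delta/a), \what{g}_{\co,1})$, and combined with (CO SR II) applied at $a=1$ (which makes $\what{g}_{\co,1}$ close to the cone metric in the far region), this yields $\diam(\what{T}(\delta), \what{g}_{\co,a}) \leq 2\delta + Ca$. The analogous bound $\diam(D_0(\delta), g_{\co,0}) \leq C\delta$ is immediate from the cone metric formula \eqref{eqn-cone-metric}, since $d_{\co,0}(0,p) = r(p)$. On the outer annulus $\{r \geq \delta\}$, (CO SR II) gives $|(\pi^{-1})^* \what{g}_{\co,a} - g_{\co,0}|_{g_{\co,0}} \leq Ca^2/\delta^2$, so any curve in this annulus has its length distorted by $\pi$ by at most the multiplicative factor $1 + Ca^2/\delta^2$.

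The main step is the distance comparison: for $x, y \in \what{T}(1)$ with $r(x), r(y) \geq \delta$, I claim $|\what{d}_{\co,a}(x,y) - d_{\co,0}(\pi(x), \pi(y))| \leq C\delta + Ca^2/\delta^2$. The idea is to take a near-minimizing path on either side, excise the portion lying in $\{r < \delta\}$, and reconnect along the sphere $\{r = \delta\}$ at a cost bounded by its induced diameter ($O(\delta)$ in both metrics, from the cone structure and metric closeness on the boundary). The modified path lies entirely in the outer annulus, so the preceding length distortion applies, and transferring via $\pi$ gives both inequalities. For $x$ with $r(x) < \delta$, one replaces $x$ by a point on $\{r = \delta\}$, adding only $O(\delta + a)$ to the error by the diameter estimate from the first step.

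Choosing $\delta = \delta(a)$ with $a \ll \delta \ll 1$, say $\delta = a^{1/2}$, makes both $\delta$ and $a^2/\delta^2$ tend to $0$, so $\pi$ is an $\eps(a)$-isometry with $\eps(a) \to 0$. The main obstacle is the path surgery in the distance comparison step: one must control geodesics that dip into the ``bad'' region $\{r < \delta\}$, where the two metrics are not comparable. The resolution exploits that this region has diameter $O(\delta + a)$ in both spaces, so detours around it cost only $O(\delta)$; balancing this against the multiplicative distortion $O(a^2/\delta^2)$ in the outer annulus leaves a usable window for $\delta = \delta(a)$ in which both errors vanish as $a \to 0$.
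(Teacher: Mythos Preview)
Your proposal is correct and follows essentially the same approach as the paper: use $\pi$ as the approximate isometry, bound the diameter of the inner tube $\what{T}(\delta)$ via the scaling property combined with asymptotic conical decay, compare metrics on the outer annulus $\{r \geq \delta\}$ via (CO SR II), and run a path-surgery argument to transfer distance estimates through the bad region. The only organizational difference is that the paper fixes $\delta$ depending on $\eps$ and then takes $a$ small (packaging the surgery step as a general ``Main Lemma'' reusable for all the convergence statements in the paper), whereas you couple $\delta = a^{1/2}$ to $a$ and use the quantitative distortion bound $Ca^2/\delta^2$ directly; both choices lead to the same diameter estimate $\what{\diam}_{\co,a}(\what{T}(\delta)) \leq C(\delta + a)$ and the same conclusion.
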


A proof of this fact also follows from the PDE estimates in \cite{CGT22} for general asymptotically conical Calabi--Yau metrics on small resolutions.

\subsubsection{Balanced Metrics on the Small Resolution}
\label{subsubsect-balanced-SR}

Next, we state the properties that we will need from the Fu--Li--Yau metrics on the compact Calabi--Yau threefold $\what{X}$. Let $\what{\omega}_{\rm CY}$ be a K\"ahler metric on the K\"ahler threefold $\what{X}$. The Fu--Li--Yau \cite{FLY12} gluing construction (see also \cite{CPY21}, \cite{Chuan} for further details on $\what{\omega}_{\FLY,a}$ for $a \neq 0$) produces a sequence of metrics $\what{\omega}_{\FLY,a}$ for $0 \leq a \leq 1$ such that
\begin{equation*}
    d \what{\omega}_{\FLY,a}^2 = 0, \quad [\what{\omega}_{\FLY,a}^2]=[\what{\omega}_{\rm CY}^2] \in H^4(\what{X},\mathbb{R}).
\end{equation*}
For the purposes of this paper, we will mainly make use of the two following properties: 
\begin{enumerate}[label = (FLY SR \Roman*),align = left]
    \item \label{prop-FLY-I} \textit{Local Model:} 
    there exists $\delta>0$ and $R>1$ such that for all $0 \leq a \leq 1$, we have
\begin{equation*}
 \what{\omega}_{\FLY,a}|_{ \{ r< \delta \}} = R \cdot \what{\omega}_{\co,a}.
\end{equation*}
Here the function $r: \what{X} \rightarrow [0,\infty)$ extends the local functions $r$ defined on a neighborhood of the curves $E_i \subset \what{V}$ to the whole compact manifold $\what{X}$ such that the set $\{ r < \delta \}$ consists of small disjoint open neighborhoods containing the $(-1,-1)$-curves $E_1, \dots, E_k$.
    \item \label{prop-FLY-II} \textit{Uniform Convergence:} For any compact set $K \subset \what{X} \backslash ( E_1 \cup \dots \cup E_k)$, the sequence $\what{\omega}_{\FLY,a}$ converges uniformly to $\what{\omega}_{\FLY,0}$ as $a \rightarrow 0$ on $K$.
\end{enumerate}

For each $E_i \simeq \mathbb{P}^1$, these metrics satisfy
\begin{equation}
    \int_{\mathbb{P}^1} \what{\omega}_{\FLY,a} \rightarrow 0, \quad a \rightarrow 0.
\end{equation}
The limiting metric $\what{\omega}_{\FLY,0}$ is singular on the curves $E_1, \dots E_k$, and only defines a genuine metric on $\what{X} \backslash (E_1 \cup \dots \cup E_k)$.

Let $\pi: \what{X} \rightarrow X_0$ be the contraction of the curves, and let $s_i = \pi(E_i)$ be the singular points of $X_0$. We will write $(X_0)_{\rm reg} = X_0 \backslash \{ s_1, \dots, s_k \}$. Since $\what{X} \backslash (E_1 \cup \dots \cup E_k) \simeq (X_0)_{\rm reg}$, the limiting metric $\what{\omega}_{\FLY,0}$ defines a Riemannian structure $( (X_0)_{\mathrm{reg}}, \, \omega_{\FLY,0} )$ with conical singularities. This induces a distance function $d_{\FLY,0}$ on $X_0$.

This brings us to one of our main theorems:
\begin{thm}
    \label{thm-FLY-SR-GH}
    The compact metric spaces $(\what{X},\what{d}_{\FLY,a})$ converge to $(X_0,d_{\FLY,0})$ in the Gromov--Hausdorff topology as $a \to 0$.
\end{thm}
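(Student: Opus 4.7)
The plan is to construct an explicit $\epsilon$-isometry $\Psi_a \colon X_0 \to \what{X}$ by setting $\Psi_a|_{(X_0)_{\mathrm{reg}}} = \pi^{-1}$ and $\Psi_a(s_i) = p_i$ for some fixed choice of points $p_i \in E_i$, and then verify the two conditions of Definition \ref{defn-eps-isom} for $a$ sufficiently small. This adapts the proof of the local-model Theorem \ref{thm-CdlO-SR-GH} to the compact setting, using the two Fu--Li--Yau properties \ref{prop-FLY-I} and \ref{prop-FLY-II}. The key new input is a uniform diameter bound on tubular neighborhoods of the exceptional curves.

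\textbf{Step 1: Uniform tube diameter bound.} I would first show that there exists a constant $C > 0$ such that for every $\delta' \in (0, \delta)$,
\begin{equation*}
\diam_{\what{g}_{\FLY,a}}(\what{T}(\delta')) \leq C \delta' \quad \text{and} \quad \diam_{g_{\FLY,0}}(D_0(\delta')) \leq C \delta',
\end{equation*}
uniformly in $a \in (0, 1]$. By \ref{prop-FLY-I}, $\what{g}_{\FLY,a}$ is a constant rescaling of $\what{g}_{\co,a}$ on $\{r < \delta\}$, and \ref{prop-SR-I} combined with the identity $S_{a^{-1}}(\what{T}(\delta')) = \what{T}(\delta'/a)$ gives
\begin{equation*}
\diam_{\what{g}_{\co,a}}(\what{T}(\delta')) = a \cdot \diam_{\what{g}_{\co,1}}(\what{T}(\delta'/a)).
\end{equation*}
The asymptotically conical decay \ref{prop-SR-II} implies $\diam_{\what{g}_{\co,1}}(\what{T}(\rho)) \leq C \rho$ for $\rho \geq 1$, yielding the claimed bound. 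The bound on $D_0(\delta')$ follows directly from the cone structure of $g_{\co,0}$.

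\textbf{Step 2: Verification of the $\epsilon$-isometry conditions.} Given $\epsilon > 0$, choose $\delta' < \delta$ so that both tube diameters from Step 1 are less than $\epsilon/4$, and set $\Omega_{\delta'} := \what{X} \setminus \bigcup_i \what{T}(\delta')$. The surjectivity condition (ii) is immediate: any $p \in \what{T}(\delta')$ satisfies $\what{d}_{\FLY,a}(p, p_i) < \epsilon/4$, while every $p \in \Omega_{\delta'}$ lies in $\Psi_a((X_0)_{\mathrm{reg}})$. For the distance condition (i), given $x, x' \in X_0$, the tube bounds allow displacement into $X_0 \setminus \bigcup_i D_0(\delta')$ at the cost of $\epsilon/2$ in both metrics consistently (using that $\Psi_a$ is a biholomorphism on the boundary spheres $\partial D_0(\delta')$). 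The remaining comparison concerns points in $\Omega_{\delta'}$, where \ref{prop-FLY-II} provides uniform convergence $\what{g}_{\FLY,a} \to \what{g}_{\FLY,0}$ as $a \to 0$.

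\textbf{Main obstacle.} The subtle point is that distance-minimizing paths may take shortcuts through the tubes, and tube excursions in $(\what{X}, \what{g}_{\FLY,a})$ need not correspond under $\pi$ to shortcuts in $(X_0, g_{\FLY,0})$. This is resolved by Step 1: any such excursion contributes at most $\epsilon/4$ to path length, and can be replaced by a path traveling along $\{r = \delta'\}$, whose intrinsic diameter is also $\lesssim \delta'$ in both metrics. One may therefore restrict to competitor paths lying in $\Omega_{\delta'/2}$, on which the uniform metric convergence converts path-length integrals on $\what{X}$ to ones on $(X_0)_{\mathrm{reg}}$ up to $o(1)$ error as $a \to 0$. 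Combining these estimates yields $|d_{\FLY,0}(x, x') - \what{d}_{\FLY,a}(\Psi_a(x), \Psi_a(x'))| < \epsilon$ for all $x, x' \in X_0$ and $a$ sufficiently small, completing the proof.
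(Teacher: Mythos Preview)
Your approach is essentially the same as the paper's, and the overall architecture is correct: tube diameter bounds from scaling plus asymptotic conical decay, combined with uniform convergence on compact sets away from the exceptional curves, assembled into an $\epsilon$-isometry. The paper packages Step~2 and your ``Main obstacle'' into a general Main Lemma (Lemma~\ref{lem-SM-bigbox}) applied simultaneously to all four convergence statements, and uses the blowdown map $F=\pi\colon \what{X}\to X_0$ rather than your $\Psi_a\colon X_0\to\what{X}$, but by Remark~\ref{rmk-GH-symmetry} this is immaterial.

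There is one point in your ``Main obstacle'' paragraph that needs sharpening. You write that ``any such excursion contributes at most $\epsilon/4$'' and that one can replace it by a path along $\{r=\delta'\}$. The danger is that a minimizing geodesic may enter and exit a given tube \emph{many} times, and a per-excursion replacement cost gives no control on the total. The paper handles this via a curve reduction lemma (Lemma~\ref{lem-reduced-curve}): given disjoint closed sets $Q_1,\dots,Q_k$, one replaces $\gamma$ by a curve $\mu$ such that $\mu^{-1}(Q_i)$ is a \emph{single} interval $[a_i,b_i]$ (take $a_i$ to be the first entry and $b_i$ the last exit), at a total cost of $\sum_i\diam(Q_i)\le k\epsilon$. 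With this in hand, the comparison of $d_\alpha(p,q)$ and $d_0(F(p),F(q))$ goes through exactly as you sketch. Your proposal implicitly relies on this reduction, so you should make the first-entry/last-exit argument explicit; once that is done the proof is complete and matches the paper's.
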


\subsubsection{Hermitian--Yang--Mills Metrics on the Small Resolution}
\label{subsubsect-HYM-SR}

We review the relevant properties of the sequence of Hermitian--Yang--Mills metrics from \cite{CPY21}. Recall that our initial threefold $\what{X}$ is K\"{a}hler Calabi--Yau and simply connected. By dimensional considerations, an application of the de Rham Decomposition Theorem \cite{yau93} implies that $(\what{X}, \what{\omega}_{\rm CY})$ satisfies the stability condition
\begin{equation*}
    \frac{1}{\rank F} \int_{\what{X}} c_1(F) \wedge \what{\omega}^2_{\rm CY} < 0
\end{equation*}
for all torsion-free coherent proper subsheaves $F \subseteq T^{1,0}\what{X}$. The Fu--Li--Yau metric $\what{\omega}_{\FLY,a}$ and the Calabi--Yau metric $\what{\omega}_{\rm CY}$ have the same squared cohomology class and so
\begin{equation*}
    \frac{1}{\rank F} \int_{\what{X}} c_1(F) \wedge \what{\omega}^2_{\FLY,a} < 0.
\end{equation*}

It follows that $T^{1,0}\what{X}$ is stable with respect to each of the Fu--Li--Yau metrics. The Li--Yau \cite{liyau87} generalization of the Donaldson--Uhlenbeck--Yau Theorem \cite{donaldson1985, uhlenbeckyau86} to Gauduchon metrics yields a family of metrics $\what{H}_a$ satisfying 
\begin{equation} \label{hym-a-norma}
    F_{\what{H}_a} \wedge \what{\omega}^2_{\FLY,a} = 0, \quad  \int_{\hat{X}} \log \frac{\det \what{H}_a}{\det \what{g}_{\FLY,a}} \, d {\rm vol}_{g_{\FLY,a}} = 0.
\end{equation}
This sequence $\what{H}_a$ satisfies the following estimates:
\begin{propn}[Collins--Picard--Yau \cite{CPY21}]
\label{propn-HYM-ests-SR}
There exists constants $C, C_k > 0$ such that the Hermitian--Yang--Mills metrics $\what{H}_a$ satisfy
\begin{equation*}
    C^{-1} \cdot \what{g}_{\FLY,a} \leq \what{H}_a \leq C \cdot \what{g}_{\FLY,a},
\end{equation*}
\begin{equation*}
    |\nabla^k \what{H}_a|_{\what{g}_{\FLY,a}} \leq C_k r^{-k}.
\end{equation*}
\end{propn}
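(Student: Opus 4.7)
The plan is to treat this as a Donaldson--Uhlenbeck--Yau type theorem applied uniformly along the degeneration $a \to 0$, and then convert the compactness/regularity statements into pointwise estimates using the scaling symmetry of the local Candelas--de la Ossa model.

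First, I would recast the problem in terms of the relative endomorphism $h_a = \what{g}_{\FLY,a}^{-1} \what{H}_a \in \Gamma(\End\, T^{1,0}\what{X})$. The HYM condition $F_{\what{H}_a} \wedge \what{\omega}_{\FLY,a}^2 = 0$, together with the balanced condition $d\what{\omega}_{\FLY,a}^2=0$, yields an elliptic equation for $h_a$ whose principal part is $\sqrt{-1} \Lambda_{\what{\omega}_{\FLY,a}} \bar{\partial}(\partial h_a \cdot h_a^{-1})$, and whose inhomogeneous term is driven by the mean curvature of the reference metric $\what{g}_{\FLY,a}$. The determinant part is pinned down by the normalization \eqref{hym-a-norma}, so the real task is to control the traceless part. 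The usual Li--Yau/Uhlenbeck--Yau continuity method produces such an $h_a$ from the stability of $T^{1,0}\what{X}$ with respect to the Gauduchon class of $\what{\omega}_{\FLY,a}$, which holds uniformly in $a$ because of the fixed cohomology identity $[\what{\omega}_{\FLY,a}^2]=[\what{\omega}_{\rm CY}^2]$.

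For the $C^0$ estimate $C^{-1} \what{g}_{\FLY,a} \leq \what{H}_a \leq C \what{g}_{\FLY,a}$, I would follow the Donaldson functional / Moser iteration argument: the quantity $\log \tr h_a$ satisfies a subharmonic-type inequality with respect to $\Lambda_{\what{\omega}_{\FLY,a}}$, and one closes the estimate via a Uhlenbeck--Yau style contradiction argument using the slope inequality for sheaves. The crux is uniformity as $a \to 0$: one needs a Sobolev inequality on $(\what{X}, \what{\omega}_{\FLY,a})$ with a constant independent of $a$, a uniform $L^\infty$ or $L^p$ bound on the mean curvature $\Lambda_{\what{\omega}_{\FLY,a}} F_{\what{g}_{\FLY,a}}$, and a uniform slope-stability margin. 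The Sobolev constant can be handled by splitting: away from the exceptional curves property \ref{prop-FLY-II} gives uniform convergence, while inside $\{r<\delta\}$ one invokes property \ref{prop-SR-I} together with the fact that the Calabi--Yau cone metric $g_{\co,0}$ carries a scale-invariant Sobolev inequality that $\what{g}_{\co,a}$ inherits from the asymptotically conical decay \ref{prop-SR-II}. If the stability margin were to collapse as $a \to 0$, one would extract a destabilizing subsheaf in the limit contradicting stability of $T^{1,0}\what{X}$ with respect to $\what{\omega}_{\rm CY}$.

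For the higher derivative estimates $|\nabla^k \what{H}_a|_{\what{g}_{\FLY,a}} \leq C_k r^{-k}$, I would exploit the scaling symmetry \ref{prop-SR-I}: near the exceptional curves, the rescaled metric $a^{-2} S_a^* \what{g}_{\FLY,a}$ equals the fixed $\what{g}_{\co,1}$ locally, and the pullback $S_a^* \what{H}_a$ is HYM with respect to this fixed metric. Applying standard interior Schauder/elliptic regularity to the HYM equation on balls of fixed $\what{g}_{\co,1}$-size produces uniform $C^k$ bounds on $S_a^* \what{H}_a$, and undoing the scaling introduces exactly the factor $r^{-k}$ claimed, because $k$ derivatives in $g$ correspond to $k$ derivatives in $S_a^* g$ multiplied by the length scale $r$. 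Away from the exceptional curves one has a uniform background geometry by \ref{prop-FLY-II}, so standard bootstrapping on the HYM equation, seeded by the $C^0$ bound, gives the higher $C^k$ estimates directly.

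The main obstacle is the uniform $C^0$ bound, specifically the uniformity as $a \to 0$. The degeneration shrinks the exceptional curves to points and inflates the curvature of $\what{g}_{\FLY,a}$ there, so one must carefully combine the scale-invariant analysis on the cone model with the global stability hypothesis to keep both the Sobolev constant and the stability margin bounded below, uniformly in the degeneration parameter. Once this is achieved, the higher regularity and the $r^{-k}$ decay follow essentially by the scaling structure of the Candelas--de la Ossa ansatz.
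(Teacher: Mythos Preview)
The paper does not prove this proposition; it is quoted as a result of Collins--Picard--Yau \cite{CPY21} and no argument is given in the present manuscript. So there is nothing here to compare your proposal against.

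That said, your outline is broadly consistent with how such uniform HYM estimates are obtained: a uniform $C^0$ bound on $h_a = \what{g}_{\FLY,a}^{-1}\what{H}_a$ via the Uhlenbeck--Yau machinery (stability plus a Sobolev inequality and a bound on the mean curvature of the reference metric, all uniform in $a$), followed by local elliptic regularity upgraded to weighted estimates by rescaling. One point to sharpen in your higher-order step: the rescaling you invoke, $a^{-2} S_a^*\what{g}_{\FLY,a} = \what{g}_{\co,1}$, only normalizes the geometry on the innermost tube $\{r \lesssim a\}$. To extract the $r^{-k}$ bound on the full neck $\{a \lesssim r \lesssim \delta\}$ one instead rescales each annulus $\{R \leq r \leq 2R\}$ by the factor $R^{-1}$ to unit scale, uses that $\what{g}_{\co,a}$ is uniformly close to the cone metric there by \ref{prop-SR-II}, applies interior Schauder estimates for the HYM system seeded by the $C^0$ bound, and then rescales back; the factor $R^{-k}$ appears from undoing this $r$-rescaling, not the $a$-rescaling. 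Your sketch conflates these two dilations, but the mechanism you have in mind is the right one.
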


The metric $H_0$ on $(X_0)_{\mathrm{reg}}$ can be constructed as the limit of these metrics $\what{H}_a$. This was done in \cite{CPY21} by taking a subsequence of $\{ \what{H}_a \}$. In Appendix \ref{appendix-hym}, we will show that these estimates imply that the full sequence converges on compact sets. Therefore there exists a Hermitian--Yang--Mills metric $H_0$ over the singular space $X_0$ such that 
\begin{equation*}
    F_{H_0} \wedge \omega^2_{\mathrm{FLY,0}} = 0, \quad C^{-1} \cdot g_{\FLY,0} \leq H_0 \leq C \cdot g_{\FLY,0}
\end{equation*}
and for any compact set $K \subset \what{X} \backslash (E_1 \cup \dots \cup E_k)$, the sequence $\what{H}_a$ converges uniformly to $H_0$ on $K$ as $a \rightarrow 0$. Going beyond compact subsets, we will prove:

\begin{thm}
    \label{thm-HYM-SR-GH}
    The compact metric spaces $(\what{X},\what{d}_{\what{H}_a})$ converge to $(X_0,d_{H_0})$ in the Gromov--Hausdorff topology as $a \to 0$.
\end{thm}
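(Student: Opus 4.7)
My plan is to follow the strategy used for Theorem \ref{thm-FLY-SR-GH}: construct explicit $\eps$-isometries $f_a: (\what{X}, \what{d}_{\what{H}_a}) \to (X_0, d_{H_0})$ out of the contraction map $\pi$, handling the exceptional region and its complement separately. The two key inputs are Proposition \ref{propn-HYM-ests-SR}, which yields the uniform equivalence $C^{-1} \what{g}_{\FLY,a} \leq \what{H}_a \leq C \what{g}_{\FLY,a}$ (and in the limit $C^{-1} g_{\FLY,0} \leq H_0 \leq C g_{\FLY,0}$), together with the uniform convergence $\what{H}_a \to H_0$ on every compact subset of $\what{X} \setminus (E_1 \cup \cdots \cup E_k)$.

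The tensor equivalence gives a $\sqrt{C}$-bi-Lipschitz equivalence of the induced distance functions $\what{d}_{\FLY,a}$ and $\what{d}_{\what{H}_a}$. Consequently the $\what{d}_{\what{H}_a}$-diameter of the tube $T(\delta) = \{r \leq \delta\}$ is controlled by $\sqrt{C}$ times its $\what{d}_{\FLY,a}$-diameter, which tends to $0$ as $\delta \to 0$ uniformly in $a$ by the diameter bound already established en route to Theorem \ref{thm-FLY-SR-GH} (this diameter bound in turn comes from the local model property \ref{prop-FLY-I} combined with the scaling \ref{prop-SR-I} and asymptotically conical decay of the Candelas--de la Ossa metrics). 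The same argument bounds the $d_{H_0}$-diameter of $\pi(T(\delta))$. On the compact complement $K_\delta = \what{X} \setminus T(\delta) \simeq X_0 \setminus \pi(T(\delta))$, the uniform tensor convergence $\what{H}_a \to H_0$ yields uniform convergence of the intrinsic path distances $d^{K_\delta}_{\what{H}_a}$ to $d^{K_\delta}_{H_0}$.

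Given $\eps > 0$, I would first fix $\delta$ small so that the two tube diameters in question are less than $\eps/10$, and then take $a$ sufficiently small that the $K_\delta$-intrinsic distances agree to within $\eps/10$. The map $f_a(x) = \pi(x)$ for $x \in K_\delta$ and $f_a(x) = s_i$ for $x$ in the tube around $E_i$ is then proposed as the desired $\eps$-isometry; surjectivity up to $\eps$ is automatic since $\pi$ is a bijection off the exceptional locus. The main obstacle is relating the full-space distance $\what{d}_{\what{H}_a}(x,y)$ to the intrinsic distance $d^{K_\delta}_{\what{H}_a}(x,y)$: a minimizing path for $\what{d}_{\what{H}_a}$ may cross $T(\delta)$, so the two distances need not agree. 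The fix is a shortcut argument: any arc of a minimizing path entering $T(\delta)$ can be replaced by a detour along $\partial T(\delta)$ at a cost of at most $\diam_{\what{d}_{\what{H}_a}}(T(\delta)) < \eps/10$, and the analogous shortcut applies for $d_{H_0}$. Combining these estimates via the triangle inequality, and separately handling the cases where one or both of $x, y$ lie in a tube using the small tube diameters, shows that $f_a$ is an $\eps$-isometry for $a$ sufficiently small. I expect this shortcut step to be the only substantive piece beyond what already appears in the proof of Theorem \ref{thm-FLY-SR-GH}, since it is the shortcut lemma there whose hypotheses are inherited here via the uniform bi-Lipschitz equivalence.
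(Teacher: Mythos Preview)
Your proposal is correct and follows essentially the same approach as the paper. The paper packages the ``shortcut argument'' you describe into a general curve-reduction lemma (Lemma~\ref{lem-reduced-curve}) and a Main Lemma (Lemma~\ref{lem-SM-bigbox}) that handles all four convergence statements uniformly; the HYM case is then dispatched in a few lines by verifying the diameter hypotheses via the uniform equivalence $C^{-1}\what{g}_{\FLY,a}\leq\what{H}_a\leq C\what{g}_{\FLY,a}$, exactly as you suggest.
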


\subsection{Metrics on Smoothings}
\label{subsect-metrics-SM}

\subsubsection{Candelas--de la Ossa Metrics on the Local Model}
\label{sect-CdlO-SM}

Next, for $t \neq 0$, we consider the smooth submanifolds $V_t \subset \mathbb{C}^4$ defined by
\begin{equation*}
    V_t = \Big\{ z \in \bb{C}^4 \mid \sum_i z_i^2 = t \Big\}.
\end{equation*}
We have the usual norm $\|z\|^2$ on $V_t$ induced from $\bb{C}^4$ given by
\begin{equation*}
    \|z\|^2 = \sum_{i=1}^4 |z_i|^2.
\end{equation*}
Candelas--de la Ossa also constructed metrics $\omega_{\co,t}$ on the smoothings $V_t$ \cite{CdlO90}. The metrics are obtained by looking for potentials of the form 
\begin{equation*}
  \omega_{\co,t} = \sqrt{-1} \del \delbar \varphi_t, \quad   \varphi_t = f_t(\| z \|^2),
\end{equation*}
and imposing the Ricci-flat condition which reduces to solving a differential equation for $f_t$. These metrics $\omega_{\co,t}$ are asymptotic to the cone geometry $(V_0,\omega_{\co,0})$, and we will make this more precise below.

As we did in the previous section, we define a radius function $r: V_t \rightarrow (0,\infty)$ by
\begin{equation*}
   r(z) = \| z \|^{\frac{2}{3}}.
\end{equation*}
Note that the condition $\sum_{i=1}^4 z_i^2 = t$ implies that $r(z) \geq |t|^{\frac{1}{3}}$ for all $z \in V_t$.
 
When $R > 0$, we may also define scaling maps $S_R \: \bb{C}^4 \rarr \bb{C}^4$ by
\begin{equation*}
    S_R(z) = R^{\frac{3}{2}} \cdot z.
\end{equation*}
The scaling map $S_R$ sends $V_\rho$ to $V_{R^3 \cdot \rho}$ and satisfies
\begin{equation*}
    r \circ S_R = R \cdot r \quad \text{ and } \quad S_R^* (\omega_{\co,0}) = R^2 \cdot \omega_{\co,0}.
\end{equation*}

Unlike the case of the small resolutions, the metrics $\omega_{\co,t}$ and $\omega_{\co,0}$ all lie on different spaces. In order to compare them (and obtain an analog of Property \ref{prop-SR-II}), we use the map $\Phi \: \bb{C}^4 \bs \{0\} \rarr \bb{C}^4$ given by
\begin{equation*}
    \Phi(z) = z + \frac{\overline{z}}{2 \|z\|^2}.
\end{equation*}
Routine computations show that $\Phi$ maps $V_0 \bs \{0\}$ into $V_1$. This map is not injective in general, but is a diffeomorphism when restricted to the set $\{z \in V_0 \mid r(z) > (\frac{1}{2})^{\frac{1}{3}} \}$ with image $\{z \in V_1 \mid r(z) > 1 \}$. In the sequel, $\Phi$ will refer to this restricted map and we will use results involving the map $\Phi$ as required. More details regarding them can be found in Appendix \ref{app-Phi}.

By composing $\Phi$ with $S_R$ for appropriate choices of $R$, we get a map from $V_0 \bs \{0\}$ to $V_t$. More precisely, let
\begin{equation}
\label{eqn-Phi_t}
    \Phi_t = S_{t^{\frac{1}{3}}} \circ \Phi \circ S_{t^{-\frac{1}{3}}}.
\end{equation}
Here we used the scaling maps to scale by a root of a non-zero complex number. Note that although $S_{t^{\frac{1}{3}}}$ depends on a choice of branch, the map $\Phi_t$ does not. We can compute that for $z \in V_0$, then
\begin{equation*}
    r(\Phi_t(z)) = \Big( (r(z))^3 + \frac{|t|^2}{4 (r(z))^3} \Big)^{\frac{1}{3}}.
\end{equation*}
In view of this identity, it will be convenient later on (especially in  Section \ref{subsect-ests-SM}) to use the notation
\begin{equation*}
    \beta_{t,\rho} = \Big( \rho^3 + \frac{|t|^2}{4\rho^3} \Big)^{\frac{1}{3}}.
\end{equation*}
We also can check that $\Phi_t$ is a diffeomorphism from $\{z \in V_0 \mid r(z) > (\frac{|t|}{2})^{\frac{1}{3}} \}$ to $\{z \in V_t \mid r(z) > |t|^{\frac{1}{3}} \}$. We use $\Phi_t$ to refer to the restricted map.

Using the maps $S_R$ and $\Phi_t$, we have the analogous properties of the Candelas--de la Ossa metrics $g_{\co,t}$ on the smoothings:

\begin{enumerate}[label = (CO SM \Roman*), align = left]
    \item \label{prop-SM-I} \textit{Normalization:} For $t \neq 0$, we have
    \begin{equation*}
        g_{\co,t} = |t|^{\frac{2}{3}} \cdot S_{t^{-\frac{1}{3}}}^* (g_{\co,1}).
    \end{equation*}
    The metrics $g_{\co,t}$ do not depend on the branch choice of $t^{-\frac{1}{3}}$.
    \item \label{prop-SM-II} \textit{Asymptotically Conical Decay:} There exists $C > 0$ independent of $t$ such that for all $t \neq 0$,
    \begin{equation*}
        |(\Phi_t)^* (g_{\co,t}) - g_{\co,0}|_{g_{\co,0}} \leq C |t| r^{-3}.
    \end{equation*}
    A consequence of this is that the metrics $g_{\co,t}$ approach $g_{\co,0}$ on compact sets away from the cone singularity as $t \rarr 0$.
\end{enumerate}

The proof of the asymptotically conical decay estimate can be found in \eg \cite{conlonhein}, where the estimate is given on $(V_1,g_{\co,1})$:
\begin{equation}
    | \Phi^* g_{\co,1} - g_{\co,0}|_{g_{\co,0}} \leq C r^{-3},
\end{equation}
and the estimate for $(V_t, g_{\co,t})$ follows by pulling-back via $S_{t^{-\frac{1}{3}}}$.

Let
\begin{equation*}
    D_t(R) := \{r \leq R \} \subseteq V_t.
\end{equation*}
be ``discs" of radius $R$ in $V_t$.

As warm-up to our result on continuity of the global non-K\"ahler geometry, we will present the following well-known convergence of the local models:

\begin{thm}
\label{thm-CdlO-SM-GH}
    The spaces $\Big(D_t(\beta_{t,1}), d_{\co,t} \Big)$ converge in the Gromov--Hausdorff topology as $t \to 0$ to the space $\Big( D_0(1), d_{\co,0} \Big)$.
\end{thm}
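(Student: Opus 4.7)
The key tool is the diffeomorphism $\Phi_t \colon \{r > (|t|/2)^{1/3}\} \cap V_0 \to \{r > |t|^{1/3}\} \cap V_t$, which by construction sends $\{(|t|/2)^{1/3} < r \leq 1\} \subset V_0$ onto $\{|t|^{1/3} < r \leq \beta_{t,1}\} \subset V_t$. Combined with the asymptotically conical decay \ref{prop-SM-II} and the scaling property \ref{prop-SM-I}, I expect this to produce explicit $\psi(t)$-isometries with $\psi(t) \to 0$.

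Concretely, pick an auxiliary cutoff $\delta = \delta(t)$ with $|t|^{1/3} \ll \delta \ll 1$ (a convenient choice is $\delta = |t|^{1/4}$). Define $f_t \colon D_0(1) \to D_t(\beta_{t,1})$ by $f_t = \Phi_t$ on the annular region $\{(|t|/2)^{1/3} < r \leq 1\}$, and collapse the inner disc $\{r \leq (|t|/2)^{1/3}\}$ to a single base point $p_t$ on the vanishing cycle $\{r = |t|^{1/3}\} \subset V_t$. By Remark \ref{rmk-GH-symmetry}, a single $\epsilon$-isometry suffices.

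I would then establish the $\epsilon$-isometry property via three ingredients. First, on $\{r \geq \delta\}$, the AC decay $|\Phi_t^* g_{\co,t} - g_{\co,0}|_{g_{\co,0}} \leq C|t|r^{-3} \leq C|t|/\delta^3$ controls the length distortion of $\Phi_t$ by a factor $1 + O(|t|/\delta^3)$. Second, using \ref{prop-SM-I} and the fact that $S_{t^{-1/3}}$ maps $(D_t(|t|^{1/3}), g_{\co,t})$ isometrically (up to scale) to $(D_1(1), |t|^{-2/3} g_{\co,1})$, I get $\diam_{g_{\co,t}} D_t(|t|^{1/3}) = O(|t|^{1/3})$; combining with the AC comparison on the annulus $\{|t|^{1/3} \leq r \leq \delta\}$ yields $\diam_{g_{\co,t}} D_t(\delta) \leq O(\delta + |t|^{1/3})$, and the analogous bound $\diam_{g_{\co,0}} D_0(\delta) = O(\delta)$ is immediate from the cone formula \eqref{eqn-cone-metric}. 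Third, for any two points $x,y$ with $r(x), r(y) \geq \delta$, any geodesic realizing $d_{\co,0}(x,y)$ or $d_{\co,t}(\Phi_t(x),\Phi_t(y))$ that dips into $\{r < \delta\}$ can be replaced by a path along $\{r = \delta\}$ at the cost of at most $O(\delta + |t|^{1/3})$ in length, by the diameter bounds above. These three facts combine to give $|d_{\co,0}(x,y) - d_{\co,t}(f_t(x),f_t(y))| = O(|t|/\delta^3) + O(\delta + |t|^{1/3}) = O(|t|^{1/4})$ in the outer region; the mixed and inner cases follow by applying the diameter bounds once more. Surjectivity up to $O(|t|^{1/3})$ is clear since $D_t(\beta_{t,1}) \setminus f_t(D_0(1)) \subseteq D_t(|t|^{1/3})$.

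The main obstacle to watch is that geodesics in $V_t$ are not constrained to stay in $\Phi_t(\{r > (|t|/2)^{1/3}\})$, so a priori they can shortcut through the smooth vanishing cycle in a way that has no counterpart in the singular cone $V_0$. This is precisely what the diameter bound on $D_t(\delta)$ of Step 2 controls: because the shortcut can save at most the diameter of the excised region, the two distance functions differ by an error that vanishes with $t$. Essentially the same strategy, with $\Phi_t$ replaced by the inclusion-type map used in Theorem \ref{thm-CdlO-SR-GH}, will serve as the blueprint for the global balanced and Hermitian–Yang–Mills convergence theorems stated afterwards.
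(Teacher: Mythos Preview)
Your proposal is correct and follows essentially the same approach as the paper: the same diffeomorphism $\Phi_t$, the same diameter bounds obtained by splitting into an inner disc (handled by the scaling property \ref{prop-SM-I}) and an annulus (handled by the AC decay \ref{prop-SM-II}), and the same curve-rerouting idea to control geodesics that dip into the small-radius region. The only organizational difference is that the paper fixes $\delta$ depending on $\epsilon$ and then takes $|t|$ small, packaging the argument into a reusable main lemma (Lemma~\ref{lem-SM-bigbox}) together with a formal curve-reduction step (Lemma~\ref{lem-reduced-curve}), whereas you couple the scales via $\delta = |t|^{1/4}$ and argue directly.
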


\subsubsection{Balanced Metrics on the Smoothings}
\label{subsubsect-balanced-SM}

We return to the global setting, where we have a holomorphic family $\mu: \mathcal{X} \rightarrow \Delta$ with smooth fibers $X_t = \mu^{-1}(t)$ for $t \neq 0$ and central fiber $X_0$ with singularities $\{s_1, \dots , s_k \}$ which are locally of the form $0 \in V_0$. If $X_0$ comes from holomorphic contraction $\pi: \what{X} \rightarrow X_0$ of $(-1,-1)$-curves, the compact complex manifolds $X_t$ may not support any K\"ahler metric. Fu--Li--Yau \cite{FLY12} prove that $X_t$ admits balanced metrics.

In order to define metrics on the smoothings $X_t$, we first need to extend the local maps $r$ and $\Phi_t$ from the previous section to global objects. 

For this, we note that there are disjoint open sets $\mathcal{U}_i \subset \mathcal{X}$ containing each singular point $s_i$ such that $\mathcal{U}_i$ is identified with 
\begin{equation*}
0 \in \mathcal{U} \subset \Big\{ (z,t) \in \bb{C}^4 \times \bb{C} \mid \sum_i z_i^2 = t \Big\}.
\end{equation*}
We can then extend the local functions $r = \| z \|^{\frac{2}{3}}$ on $\mathbb{C}^4$ to a global function $r: \mathcal{X} \rightarrow [0,\infty)$ with $r^{-1}(0) = \{ s_1, \dots, s_k \}$.

Next, we can extend the local maps $\Phi_t$ to global diffeomorphisms. 
\begin{equation*}
    \Phi_t: X_0 \cap \Big\{ r(z) > \Big(\frac{|t|}{2} \Big)^{\frac{1}{3}} \Big\} \rightarrow X_t \cap \{ r(z) > |t|^{\frac{1}{3}} \}
\end{equation*}
such that $\Phi_t$ is the model smoothing \eqref{eqn-Phi_t} on the local sets $\mathcal{U}_i$. This can be done by taking a horizontal lift $\xi$ of the vector field $\frac{\partial}{\partial t}$ on $\Delta$ which agrees with the vector field generating the model smoothing. Then flowing by the lifted vector field $\xi$ on $\mathcal{X}$ gives $\Phi_t$.

The Fu--Li--Yau construction \cite{FLY12} leads to a sequence $\omega_{\FLY,t}$ of hermitian metrics on $X_t$ solving
\begin{equation*}
    d \omega_{\FLY,t}^2 = 0.
\end{equation*}
These are obtained by: 1) a pullback and gluing construction, followed by 2) a perturbation step to ensure the balanced condition. The metric in step 1 is denoted $g_t$ and the metric in step 2 is denoted $g_{\FLY,t}$.

\par $\bullet$ Step 1. The expression for $\omega_t$ from \cite{FLY12} is
    \begin{equation} \label{fly-pre-per}
    \begin{aligned}
        \omega_t^2 &= pr_t^{2,2} \Bigg[ (\Phi_t^{-1})^* \Big(\omega_{\FLY,0}^2 - \sqrt{-1} \del \delbar (\rho_0 \cdot f_0 (\|z\|^2) \cdot \sqrt{-1} \del \delbar f_0 (\|z\|^2)) \Big) \\
        &\qquad \qquad + \sqrt{-1} \del \delbar \Big( \rho_t \cdot f_t (\|z\|^2) \cdot \sqrt{-1} \del \delbar f_t (\|z\|^2) \Big) \Bigg]
    \end{aligned}
    \end{equation}
    where $pr_t^{2,2}$ denotes the projection onto the $(2,2)$ component with respect to the complex structure $J_t$ on $X_t$ and $\rho_0$ and $\rho_t$ are smooth cutoff functions. In the above, $f_t$ is the function
    \begin{equation*}
        f_t(x) = \Big( \frac{|t|^2}{2} \Big)^{\frac{1}{3}} \int_0^{\cosh^{-1} (x/|t|)} (\sinh (2y) - 2y)^{\frac{1}{3}} dy. 
    \end{equation*}
These functions are from the Calabi--Yau local model $\omega_{\co,t} = \sqrt{-1} \partial \bar{\partial} f_t$, so that in a neighborhood $\{r < \delta \}$ where the cutoff functions $\rho \equiv 1$ there holds $\omega^2_t = \omega^2_{\co,t}$. We take a square-root of \eqref{fly-pre-per} to obtain the metric $\omega_t$.

\par $\bullet$ Step 2. The Fu--Li--Yau metrics correct $\omega_t$ by
\begin{equation} \label{fly-defn}
\omega_{\FLY,t}^2 = \omega_t^2 +  \partial \bar{\partial}^\dagger \partial^\dagger \gamma_t -  \bar{\partial} \partial^\dagger \bar{\partial}^\dagger \bar{\gamma_t}
\end{equation}
where $\gamma_t \in \Lambda^{2,3}(X_t)$ solves
\[
E_t(\gamma_t) = \bar{\partial} \omega_t^2, \quad d \gamma_t = 0
\]
and $E_t$ is the Kodaira--Spencer operator \cite{Kodaira}, which is a 4th order elliptic operator which acts on $(2,3)$-forms by 
\[
E_t = \partial \bar{\partial} \bar{\partial}^\dagger \partial^\dagger + \partial^\dagger \bar{\partial} \bar{\partial}^\dagger \partial + \partial^\dagger \partial .
\]
The adjoints here are with respect to $\omega_t$. The construction is such that $d \omega_{\FLY,t}^2 = 0$, and the main part of the argument in \cite{FLY12} is to prove that $\omega_{\FLY,t} > 0$ for small enough $t$.

We will need the following two properties of the Fu--Li--Yau metrics. These properties can be extracted from the estimates in \cite{FLY12}, and we refer to \cite{CPY21} for further discussion.

\begin{enumerate}[label = (FLY SM \Roman*), align = left]
    \item \label{prop-FLY-Ism} \textit{Local Model:} Near each singular point $s_i \in \mathcal{X}$, there
     exists constants $c,C,\epsilon,\delta>0$ and such that for all $t \in \Delta_\epsilon$, we have
\begin{equation}
\label{eqn-FLY-t-est}
\sup_{ \{ r \leq \delta \}} | g_{\FLY,t} - c \cdot g_{\co,t}|_{g_{\co,t}} \leq C |t|^{\frac{2}{3}}.
\end{equation}
Here $\Delta_\epsilon = \{ t \in \mathbb{C} : |t|< \epsilon \}$.
    \item \label{prop-FLY-IIsm} \textit{Uniform Convergence:} For any compact set $K \subset (X_0)_{\mathrm{reg}}$, the sequence $\Phi_t^* g_{\FLY,t}$ converges uniformly to $g_{\FLY,0}$ as $t \rightarrow 0$ on $K$.
\end{enumerate}

Our main theorem on the smoothings takes the form:

\begin{thm}
    \label{thm-FLY-SM-GH}
    The compact metric spaces $(X_t, d_{\FLY,t})$ converge to $(X_0,d_{\FLY,0})$ in the Gromov--Hausdorff topology as $t \to 0$.
\end{thm}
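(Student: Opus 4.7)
The plan is to adapt the strategy of Theorem \ref{thm-FLY-SR-GH} to the smoothing setting. Fix a small $\delta>0$. I would construct an explicit map $f_{t,\delta}\colon X_t \to X_0$ that uses the smoothing diffeomorphism $\Phi_t^{-1}$ on the ``bulk'' $\{r>\delta\}\subset X_t$ and collapses each ``neck'' $\{r\le\delta\}$ near a vanishing cycle to the corresponding singular point $s_i \in X_0$. The claim to verify is that $f_{t,\delta}$ is an $\varepsilon(t,\delta)$-isometry in the sense of Definition \ref{defn-eps-isom} for some $\varepsilon(t,\delta)$ that can be driven to $0$ by first taking $t\to 0$ and then $\delta\to 0$.

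Approximate surjectivity is the easier condition: the bulk of $X_0$ lies in the image of $\Phi_t^{-1}$, the singular points $s_i$ lie in the image of the neck collapse by construction, and any point of $X_0$ within $g_{\FLY,0}$-distance $\delta$ of some $s_i$ is within $O(\delta)$ of $f_{t,\delta}(X_t)$ once we establish a diameter bound for small neighborhoods of $s_i$ in $X_0$ (which follows from the conical structure of $g_{\FLY,0}$ near the singularities inherited from $g_{\co,0}$).

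For the distance comparison, I would split into two regimes. On curves staying in the bulk $\{r > \delta\}$, Property \ref{prop-FLY-IIsm} yields $\Phi_t^* g_{\FLY,t} \to g_{\FLY,0}$ uniformly on compact sets, hence lengths differ by $o_t(1)$ as $t\to 0$ with $\delta$ fixed. For pairs of points whose shortest path enters the neck in $X_t$, the curve may be replaced by one that enters $\{r = \delta\}$, traverses the neck, and exits at $\{r=\delta\}$; this rerouting costs at most the $g_{\FLY,t}$-diameter of $\{r\le\delta\}$. On the target side, two points $f_{t,\delta}(x), f_{t,\delta}(y)$ whose shortest path would pass near $s_i$ can be joined through $s_i$ at analogous cost, using that $d_{\FLY,0}$ is the length metric of the conical completion of $((X_0)_{\mathrm{reg}}, g_{\FLY,0})$. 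Combining these two estimates matches $d_{\FLY,t}$ and $d_{\FLY,0}(f_{t,\delta}(\cdot),f_{t,\delta}(\cdot))$ up to an error that is $o_t(1) + O(\delta)$.

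The main obstacle, and the quantitative heart of the argument, is the uniform diameter bound $\mathrm{diam}_{g_{\FLY,t}}(\{r \le \delta\}) \le C\delta$. This requires combining Property \ref{prop-FLY-Ism}, which gives $g_{\FLY,t} = c\cdot g_{\co,t} + O(|t|^{2/3})$ on $\{r\le\delta\}$, with a uniform diameter bound for the Candelas--de la Ossa disc $(D_t(\delta), g_{\co,t})$. The latter is supplied by the normalization \ref{prop-SM-I} and the asymptotically conical decay \ref{prop-SM-II}: for $|t|$ small enough (depending on $\delta$), $g_{\co,t}$ is uniformly close to the cone $g_{\co,0}$ on $\{|t|^{1/3} \ll r \le \delta\}$, so this annular region has diameter $O(\delta)$, while the ``core'' $\{r\le C|t|^{1/3}\}$ has diameter $O(|t|^{1/3}) \ll \delta$ by scaling (this is essentially Theorem \ref{thm-CdlO-SM-GH}). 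Once the neck diameter bound is in hand, the error terms assemble to give $\varepsilon(t,\delta) \to 0$ in the iterated limit, completing the construction of the required $\varepsilon$-isometries.
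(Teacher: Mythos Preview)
Your proposal is correct and follows essentially the same route as the paper. The paper packages your argument into a general ``Main Lemma'' (Lemma~\ref{lem-SM-bigbox}) that handles both the small resolution and the smoothing simultaneously, together with a curve-reduction step (Lemma~\ref{lem-reduced-curve}) to deal with geodesics that enter and exit the bad regions $\{r\le\delta\}$ multiple times; your informal ``rerouting'' description is exactly this step. The diameter bound you identify as the quantitative heart is proved in the paper precisely as you outline: scaling for the core $\{r\le K|t|^{1/3}\}$ via \ref{prop-SM-I}, asymptotically conical decay \ref{prop-SM-II} on the annulus, and then \ref{prop-FLY-Ism} to pass from $g_{\co,t}$ to $g_{\FLY,t}$ (Lemma~\ref{lem-diam-FLY-t-bounds}).
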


\subsubsection{Hermitian--Yang--Mills Metrics on the Smoothing}
\label{subsubsect-HYM-SM}

In order to get approximate Hermitian--Yang--Mills solutions on the smoothings, Collins--Picard--Yau glued the pullback of the metric $H_0$ to the Candelas--de la Ossa metrics \cite{CPY21}. These approximate metrics were perturbed to obtain true solutions $H_t$ to the Hermitian--Yang--Mills equations. The resulting metrics $H_t$ on $X_t$ solve
\begin{equation*}
    F_{H_t} \wedge \omega^2_{\FLY,t} =0, \quad   \int_{X_t} \log \frac{\det H_t}{\det g_{\FLY,t}} \, d {\rm vol}_{g_{\FLY,t}} = 0.
\end{equation*}
The construction of \cite{CPY21} is such that
\begin{equation*}
H_t = H_{t,{\rm ref}} e^{u}, \quad   | u |_{H_{t, {\rm ref}}} + r |\nabla u|_{H_{t, {\rm ref}}} \leq C |t|^{(1/3) |\beta|}
\end{equation*}
for $|\beta| \in (0,1)$, and
\begin{equation*}
    H_{t,{\rm ref}} = \chi g_{\co,t} + (1-\chi) [(\Phi_t^{-1})^* H_0]^{1,1}
\end{equation*}
and $\chi(z) = \zeta( |t|^{-\alpha} \| z \|^2 )$ for $0<\alpha<1$ and $\zeta: [0,\infty) \rightarrow [0,1]$ with $\zeta \equiv 1$ on $[0,1]$ and $\zeta \equiv 0$ on $[2,\infty)$. 

The metrics $H_t$ are uniformly equivalent to $g_{\FLY,t}$, so that
\begin{equation} \label{hym-sm-est}
    C^{-1} \cdot g_{\FLY,t} \leq H_t \leq C \cdot g_{\FLY,t}.
\end{equation}
Furthermore, for any compact set $K \subset (X_0)_{\mathrm{reg}}$, the sequence $\Phi_t^* H_t$ converges uniformly to $H_0$ as $t \rightarrow 0$ on $K$. We will show the Gromov--Hausdorff convergence of the Yang--Mills metrics.

\begin{thm}
    \label{thm-HYM-SM-GH}
    The compact metric spaces $(X_t, d_{H_t})$ converge to $(X_0, d_{H_0})$ in the Gromov--Hausdorff topology as $t \to 0$.
\end{thm}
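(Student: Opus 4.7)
The plan is to mimic the proof of Theorem \ref{thm-FLY-SM-GH}, feeding in two additional inputs: the uniform two-sided bound $C^{-1} g_{\FLY,t} \leq H_t \leq C g_{\FLY,t}$ from \eqref{hym-sm-est} (which holds in particular at $t=0$), and the uniform convergence $\Phi_t^* H_t \to H_0$ on compact subsets of $(X_0)_{\rm reg}$. The bi-Lipschitz equivalence alone is not sufficient for Gromov--Hausdorff convergence of $(X_t, d_{H_t}) \to (X_0, d_{H_0})$, since it only controls distances up to a fixed multiplicative factor. The pointwise convergence on the regular set, combined with shrinking-diameter estimates on tubular neighborhoods of the singular set, is what produces genuine Gromov--Hausdorff convergence.

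Fix $\epsilon > 0$. The first step is a shrinking-tube diameter bound: I would show there exists $\delta > 0$ such that for all $|t|$ sufficiently small, the $d_{H_t}$-diameter of the tube $\{r \leq \delta\} \cap X_t$ is at most $\epsilon / 2$, and similarly for $t = 0$. To obtain this, I would invoke the analogous bound for $d_{\FLY,t}$ established en route to Theorem \ref{thm-FLY-SM-GH} -- which itself follows from the local model property \ref{prop-FLY-Ism} together with the explicit cone geometry of $g_{\co,t}$ -- and then upgrade to $H_t$ via \eqref{hym-sm-est}, paying a factor of $\sqrt{C}$. The second step is distance comparison on the bulk: with $\delta$ as above, set $K_\delta = \{r \geq \delta\} \cap X_0 \subset (X_0)_{\rm reg}$, a compact set on which $\Phi_t$ is a diffeomorphism for $|t|$ small and on which $\Phi_t^* H_t \to H_0$ uniformly. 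Because $H_0$ is nondegenerate on the compact set $K_\delta$, this pointwise convergence of positive-definite tensors upgrades to uniform convergence of the corresponding intrinsic distance functions on $K_\delta$, with error at most $\epsilon / 2$ for $|t|$ small.

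Given these two steps, I would define an $\epsilon$-isometry $F_t: X_t \to X_0$ by setting $F_t = \Phi_t^{-1}$ on $\{r \geq \delta\} \cap X_t$ and collapsing each connected component of the tube $\{r < \delta\} \cap X_t$ to the corresponding singular point $s_i \in X_0$. Case analysis based on whether $x, y \in X_t$ lie in the bulk or in the tubes, together with the triangle inequality and the two steps above, yields $|d_{H_t}(x,y) - d_{H_0}(F_t(x), F_t(y))| = O(\epsilon)$; the $\epsilon$-density condition is automatic since $F_t$ surjects onto $X_0$. The main subtlety, as in the proof of Theorem \ref{thm-FLY-SM-GH}, is in comparing the ambient distance $d_{H_t}(x,y)$ with the intrinsic distance within $K_\delta$ when the length-minimizing path between two bulk points dips into the tube; such excursions can be rerouted along the tube boundary at total cost bounded by the tube diameter from Step 1. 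I expect this rerouting argument, together with the shrinking-tube estimate, to be the main obstacle -- but both ingredients are inherited from the balanced case, so once Theorem \ref{thm-FLY-SM-GH} is in hand, the Hermitian--Yang--Mills version follows essentially by the bi-Lipschitz comparison together with the uniform convergence of $\Phi_t^* H_t$ on the regular part.
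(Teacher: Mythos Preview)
Your proposal is correct and follows essentially the same approach as the paper: the paper packages the shrinking-tube diameter estimate, the uniform convergence on the bulk, and the curve-rerouting argument into a general Main Lemma (Lemma~\ref{lem-SM-bigbox}) and then verifies its hypotheses for $H_t$ exactly as you describe, using \eqref{hym-sm-est} to transfer the Fu--Li--Yau diameter bounds (Lemma~\ref{lem-suff-small-discs-SM-model}) and the uniform convergence $\Phi_t^* H_t \to H_0$ on compact subsets of $(X_0)_{\rm reg}$. Your direct sketch is simply an unpacking of that lemma in this specific case.
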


\subsection{Notation and conventions}
\label{subsect-notn}

Before we continue with our study of conifold transitions, we establish a general notational guideline due to the sheer number of metrics involved:

When working with quantities related to metrics on small resolutions ($\what{X}$ and $\what{V}$), we will include a hat and a subscript to denote the metric being used. We will also use the parameters $a$ and $b$ for families of metrics on these spaces.

In a similar vein, analogous quantities on the smoothings ($X_t$ and $V_t$) and singular spaces ($X_0$ and $V_0$) will not have a hat, but will include an appropriate subscript. The parameters used for families of metrics here will be $s$ and $t$.

At times we will present lemmas and results that can be applied in more general settings encompassing both the small resolution and the smoothings. In this setting, we will not include the hat, but we will use the Greek letters $\alpha$ and $\beta$ as parameters.

For example:
\begin{center}
\begin{tabular}{| c | c |}
    \hline
    $\hat{g}_{\co,1}$ & Candelas--de la Ossa metric on the small resolution $\hat{V}$ at $a = 1$. \\
    \hline
    $g_{\co,1}$ & Candelas--de la Ossa metric on the smoothing $V_1$ at $t = 1$. \\
    \hline
    $\hat{d}_{\FLY,a}$ & Distance w.r.t the Fu--Li--Yau metric $\hat{g}_{\FLY,a}$ on the small resolution $\hat{X}$. \\
    \hline
    $d_{\FLY,t}$ & Distance w.r.t the Fu--Li--Yau metric ${g}_{\FLY,t}$ on the smoothing $X_t$. \\
    \hline
    $\hat{L}_{\hat{H}_a}(\gamma)$ & Length of a curve $\gamma$ w.r.t. the HYM metric $\hat{H}_a$ on the small resolution $\hat{X}$. \\
    \hline
    $L_{H_t}(\gamma)$ & Length of a curve $\gamma$ w.r.t. the HYM metric $H_t$ on the smoothing $X_t$. \\
    \hline
    $\diam_\alpha(S)$ & Diameter of a set $S$ w.r.t a metric $g_\alpha$ on a manifold $X$. \\
    \hline
\end{tabular}
\end{center}

In addition, we adopt the convention that $C$ denotes a generic positive constant that may change from line to line but do not depend on $a$ or $t$.

\section{Gromov--Hausdorff Continuity in the Regular Case}
\label{sect-GH-conv-reg}

In this section, we show Gromov--Hausdorff continuity of the geometries $(\what{X},\what{d}_{\FLY,a})$ and $(\what{X},\what{d}_{\what{H}_a})$ for $a > 0$, as well as $(X_t,d_{\FLY,t})$ and $(X_t,d_{H_t})$ for $t \neq 0$. That is, we show that the variations of geometry induced by a conifold transition is a continuous process in $\cal{M}$ away from the singular conifold. Namely, we have the following theorem:

\begin{thm}\label{thm-GH-cont-pos}
    The following paths $(0,1] \to \cal{M}$ are continuous in the Gromov--Hausdorff topology:
    \begin{enumerate}[label=(\roman*)]
        \item $a \mapsto (\what{X},\what{d}_{\FLY,a})$,
        \item $a \mapsto (\what{X},\what{d}_{H_a})$.
    \end{enumerate}
    Furthermore, the following maps $\Delta_\epsilon \setminus \{0\} \to \cal{M}$ are continuous in the Gromov--Hausdorff topology:
    \begin{enumerate}[label=(\roman*)]\addtocounter{enumi}{2}
            \item $t \mapsto (X_t,d_{\FLY,t})$,
        \item $t \mapsto (X_t,d_{H_t})$.
    \end{enumerate}
\end{thm}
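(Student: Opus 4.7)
The plan is to reduce each of the four statements to smooth (or at least uniform) convergence of the underlying Riemannian metric tensors on a fixed smooth compact manifold. Standard Riemannian geometry then gives uniform convergence of the induced distance functions, which makes the identity map (or the pullback by a trivializing diffeomorphism of the family) an $\epsilon$-isometry with $\epsilon \to 0$, and Gromov--Hausdorff continuity follows directly from Definition \ref{defn-GH-dist}.

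For (i) and (ii), the underlying manifold $\what{X}$ is literally fixed and only the metric varies, so I would argue smooth dependence of $\what{g}_{\FLY,a}$ and $\what{H}_a$ on $a\in(0,1]$. The pre-perturbation balanced metric of the Fu--Li--Yau construction depends smoothly on $a$, because by \ref{prop-SR-I} the Candelas--de la Ossa local model $\what{g}_{\co,a}$ is obtained by a smooth rescaling of $\what{g}_{\co,1}$, while the cutoffs and the background K\"ahler metric used in the gluing are $a$-independent. The Fu--Li--Yau correction is the solution of a parameter-dependent fourth-order linear elliptic equation built from the $a$-dependent Kodaira--Spencer-type operator on $\what{X}$, so parameter-dependent elliptic regularity delivers smooth dependence of the correction, and hence of $\what{g}_{\FLY,a}$, on $a$. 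Once the balanced metric varies smoothly, the HYM metric $\what{H}_a$ satisfying \eqref{hym-a-norma} is uniquely determined and the linearization of $F_{\what{H}} \wedge \what{\omega}_{\FLY,a}^2 = 0$ at $\what{H}_{a_0}$ is Fredholm with cokernel only in the scalars (killed by the log-determinant normalization), so the implicit function theorem produces a smooth family of solutions in a neighborhood of any $a_0 > 0$.

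For (iii) and (iv), the fibers $X_t$ themselves change with $t$, so the first step is to trivialize the family near a base point $t_0 \neq 0$. Since $\mu : \mathcal{X} \to \Delta$ is a proper holomorphic submersion over $\Delta\setminus\{0\}$, Ehresmann's theorem, implemented concretely via a global horizontal lift of $\partial_t$ of the kind used in Section \ref{subsubsect-balanced-SM} to define $\Phi_t$ but now carried out over all of $X_{t_0}$ rather than a punctured neighborhood, yields a smooth family of diffeomorphisms $\Psi_{t,t_0}: X_{t_0}\to X_t$ for $t$ close to $t_0$. Pulling back $g_{\FLY,t}$ and $H_t$ by $\Psi_{t,t_0}$ places everything on the fixed compact manifold $X_{t_0}$; the argument of the previous paragraph, applied to the $t$-family of balanced and HYM equations viewed through $\Psi_{t,t_0}$, then gives smooth dependence on $t$ and convergence to $g_{\FLY,t_0}$ and $H_{t_0}$ as $t \to t_0$. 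Because $\Psi_{t,t_0}$ is an isometry from $(X_{t_0},\Psi_{t,t_0}^* g_{\FLY,t})$ to $(X_t,g_{\FLY,t})$, the Gromov--Hausdorff continuity transfers back to the original spaces.

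The main obstacle is the verification of continuous parameter-dependence of the two PDE solutions, namely the Fu--Li--Yau correction $\what{\gamma}_a, \gamma_t$ and the HYM metrics $\what{H}_a, H_t$. In both cases this reduces to showing that the relevant linearized operators are Fredholm with trivial cokernel modulo the imposed normalizations; this is routine away from the singular parameter, because all operators are uniformly elliptic on a fixed smooth compact manifold, but the estimates must be organized carefully enough to carry not just existence of each individual solution but continuous dependence on the parameter. The genuinely delicate endpoints $a\to 0$ and $t\to 0$, where the manifold and the operators degenerate along the exceptional set, are precisely what the remaining sections of the paper address and are not touched by this theorem.
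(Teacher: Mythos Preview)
Your overall strategy---reduce to uniform convergence of Riemannian metrics on a fixed compact manifold (via Ehresmann trivialization for the smoothings), then invoke the elementary fact that $C^0$-close metrics have close distance functions---is exactly what the paper does (Proposition~\ref{propn-GH-convergence} and Corollary~\ref{cor-sm-GH}). The differences are in how parameter-dependence of the two PDE solutions is established.

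For the HYM metrics you propose the implicit function theorem; the paper instead runs a compactness-plus-uniqueness contradiction: assume $\what{H}_{a_i}\not\to\what{H}_b$, use the uniform estimates of Proposition~\ref{propn-HYM-ests-SR} and Arzel\`a--Ascoli to extract a convergent subsequence, and note the limit solves the same HYM equation with the same normalization, hence equals $\what{H}_b$. Your IFT route is legitimate and arguably cleaner, but requires checking that the linearization is invertible modulo scalars, which is equivalent to the stability input the paper uses via uniqueness. For the Fu--Li--Yau correction $\gamma_t$ on the smoothings the paper likewise argues by contradiction with Arzel\`a--Ascoli rather than abstract elliptic parameter-dependence.

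Two small corrections. First, on the small resolution $\what{X}$ there is \emph{no} Kodaira--Spencer correction step: the Fu--Li--Yau metric $\what{\omega}_{\FLY,a}$ is obtained by a direct gluing that is already balanced (see property~\ref{prop-FLY-I} and the proof of Lemma~\ref{lem-FLY-SR}), so case~(i) is simpler than you sketch. Second, calling the verification that $\ker E_t=\{0\}$ ``routine'' undersells it: the paper devotes a lemma to this, and the argument uses the cohomology vanishings $H^{1,3}(X_t)=0$ and $H^{3,2}(X_t)=0$, which in turn rely on the conifold-transition topology and results of Friedman and Fu--Li--Yau. Without this kernel computation your parameter-dependence argument for $\gamma_t$ does not go through, so you should either supply it or cite it explicitly.
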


Extending this continuity to the singular spaces obtained when $a=t=0$ is a more difficult task, which we will describe in Section~\ref{sect-GH-conv-sing}. Our final result will be that the maps $[0,1] \rightarrow \cal{M}$ and $\Delta_\epsilon \rightarrow \cal{M}$ are continuous, but in this current section we only consider the geometries away from $X_0$.

\subsection{Gromov--Hausdorff vs Uniform Convergence}

Since Riemannian manifolds exhibit more structure than that of a metric space, there is considerably more flexibility when defining notions of continuity of the geometry of a family of Riemannian manifolds than that of continuity in the Gromov--Hausdorff topology. In particular, a very natural way to define continuity of the geometry is through some continuity condition on a family of metrics. As the following well-known lemma shows, the Gromov--Hausdorff topology is weaker than the topology of uniform convergence of Riemannian metrics. Many similar results can be found in the literature, \cf Example 7.4.4 of \cite{BBI}, and we include the proof as a warm-up.

\begin{propn}
\label{propn-GH-convergence}
    Let $g_\alpha$ be a family of metrics on a connected, compact manifold $X$ of dimension $n$, where the parameter $\alpha \in U$ lies in an open set which we take to be either real or complex: $U \subset \mathbb{R}$ or $U \subset \mathbb{C}$. Fixing a parameter $\beta \in U$, suppose that $\alpha \mapsto g_{\alpha}$ is continuous at $\alpha = \beta$ in the $L^{\infty}$ norm with respect to $g_{\beta}$. Then $\alpha \mapsto (X,d_{g_{\alpha}})$ is continuous at $\alpha=\beta$ in the Gromov--Hausdorff topology.
\end{propn}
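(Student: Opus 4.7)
The plan is to use the identity map $\mathrm{id} \colon X \to X$ as an $\epsilon$-isometry between $(X, d_{g_\alpha})$ and $(X, d_{g_\beta})$ for $\alpha$ sufficiently close to $\beta$. Since $\mathrm{id}$ is automatically surjective, condition (ii) of Definition~\ref{defn-eps-isom} is trivial, and the only content is to show that distances are close: I need a uniform estimate of the form
\begin{equation*}
    \sup_{x,y \in X} |d_{g_\alpha}(x,y) - d_{g_\beta}(x,y)| \longrightarrow 0 \quad \text{as } \alpha \to \beta.
\end{equation*}

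First I would pass from the pointwise comparison of the metric tensors to a comparison of lengths of curves. Given $\eps > 0$, choose $\alpha$ close enough to $\beta$ that $\|g_\alpha - g_\beta\|_{L^\infty(X, g_\beta)} < \eps$. A standard linear-algebra computation shows that for any tangent vector $v$,
\begin{equation*}
    (1 - C\eps)\, |v|^2_{g_\beta} \leq |v|^2_{g_\alpha} \leq (1 + C\eps)\, |v|^2_{g_\beta},
\end{equation*}
where $C$ depends only on $n$. Integrating along any piecewise smooth curve $\gamma$ gives $|L_{g_\alpha}(\gamma) - L_{g_\beta}(\gamma)| \leq C' \eps \, L_{g_\beta}(\gamma)$ for $\eps$ small. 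Taking infimum over curves joining $x$ and $y$ then yields
\begin{equation*}
    |d_{g_\alpha}(x,y) - d_{g_\beta}(x,y)| \leq C' \eps \, d_{g_\beta}(x,y) \leq C' \eps \cdot \diam(X, g_\beta).
\end{equation*}
Since $X$ is compact, $\diam(X, g_\beta) < \infty$, so the right-hand side tends to zero uniformly in $x,y$ as $\alpha \to \beta$.

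Hence, for any $\eta > 0$, choosing $\alpha$ close enough to $\beta$ that $C' \eps \, \diam(X, g_\beta) < \eta$ makes $\mathrm{id}$ an $\eta$-isometry, and the same map in the other direction works as well. Therefore $d_{\mathrm{GH}}\bigl((X, d_{g_\alpha}), (X, d_{g_\beta})\bigr) \to 0$. The main (very minor) obstacle is simply verifying the elementary linear-algebra bound on $|v|_{g_\alpha}$ in terms of $|v|_{g_\beta}$ from an $L^\infty$ bound on $g_\alpha - g_\beta$; the rest is a direct application of the definition of Gromov--Hausdorff distance via $\eps$-isometries.
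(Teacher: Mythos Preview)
Your proposal is correct and follows essentially the same approach as the paper: use the identity map as an $\epsilon$-isometry, pass from an $L^\infty$ bound on $g_\alpha - g_\beta$ to a uniform equivalence of norms on tangent vectors, and then compare lengths of curves to control $|d_{g_\alpha}(x,y) - d_{g_\beta}(x,y)|$ by a constant times $\epsilon \cdot \diam_{g_\beta}(X)$. The paper carries out the last step by explicitly choosing minimizing geodesics in each metric and comparing, whereas you phrase it as taking an infimum over curves; these are equivalent and equally valid.
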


\begin{rmk}\label{rmk_L_inf}
    Since $X$ is compact, all metrics on $X$ are uniformly equivalent. That is, given two metrics $g,\tilde{g}$ on $X$, there is some $C > 1$ such that
    \[
    C^{-1} \cdot g < \tilde{g} < C \cdot g.
    \]
    Thus, the continuity assumption in Proposition~\ref{propn-GH-convergence} could be replaced by continuity of the family of metrics $g_{\alpha}$ in the $L^{\infty}$ norm with respect to any metric on $X$.
\end{rmk}

\begin{proof}
Let $0 < \epsilon < 1$ and fix a parameter $\beta$. Consider the identity map $(X,d_{g_{\alpha}}) \to (X,d_{g_{\beta}})$. This map is surjective, so it suffices to show that it is an $(C\eps)$-isometry when $|\alpha-\beta|$ is small, for some constant $C$ independent of $\alpha$.

The $L^\infty$ continuity of the metrics $g_\alpha$ at $\alpha=\beta$ implies that we may choose $\delta > 0$ sufficiently small so that if $|\alpha-\beta| < \delta$, then $\sup_{X} |g_{\alpha} - g_{\beta}|_{g_{\beta}} < \eps$. It follows that there exists some $\eps'=\eps'(\delta) \in \mathbb{R}$ such that for all $|\alpha-\beta|<\delta$, we have
\begin{equation*}
    (1-\eps') \cdot g_{\beta} \leq g_{\alpha} \leq (1+\eps') \cdot g_{\beta}.
\end{equation*}
Thus any $\alpha$ with $|\alpha-\beta|<\delta$, the length of a curve $\gamma$ satisfies $L_{\alpha}(\gamma) \leq (1+\eps') \cdot L_{\beta}(\gamma)$. It follows that $D := (1+\eps') \cdot \diam_\beta (X) \geq \diam_\alpha (X)$ for all $\alpha$ with $|\alpha-\beta|<\delta$. 

Pick points $p,q \in X$ and choose minimizing geodesics $\gamma_{\alpha}, \gamma_{\beta}: [0,1] \to X$ from $p$ to $q$ in the $g_\alpha$ and $g_{\beta}$ metrics, respectively. We have that $\gamma_{\alpha}(0) = \gamma_{\beta}(0) = p$ and $\gamma_{\alpha}(1) = \gamma_{\beta}(1) = q$, and furthermore $L_{\alpha}(\gamma_{\alpha}) = d_{\alpha} (p,q)$ and $L_{\beta}(\gamma_\beta) = d_{\beta}(p,q)$. Comparing the lengths of $\gamma_{\alpha}$ and $\gamma_{\beta}$ in the metrics $g_{\alpha}$ and $g_{\beta}$, we note that
\begin{align*}
\begin{split}
        |L_{\alpha}(\gamma_{\alpha}) - L_{\beta}(\gamma_{\alpha})| 
        & \leq \int_0^1 |g_{\alpha} - g_{\beta}|_{g_{\alpha}} \cdot|\dot{\gamma_{\alpha}}|_{g_{\alpha}} \, ds \\
        & \leq \left(\sup_{X} \, |g_{\alpha} - g_{\beta}|_{g_{\alpha}} \right) \cdot \int_0^1 |\dot{\gamma_{\alpha}}|_{g_{\alpha}}\,ds \\
        & < D \eps.
        \end{split}
    \end{align*}
Similarly, we have $|L_{\alpha}(\gamma_{\beta}) - L_{\beta}(\gamma_{\beta})| < D \eps$. Then we see that for $|{\alpha}-{\beta}| < \delta$, we have
    \begin{align*}
        d_{\alpha}(p,q) &\leq L_{\alpha}(\gamma_{\beta}) \leq L_\beta(\gamma_\beta) + D\eps \\
        &= d_{\beta}(p,q) + D \eps.
    \end{align*}
    Similarly, $d_{\beta}(p,q) < d_{\alpha}(p,q) + D\eps$, so that $|d_{\alpha}(p,q) - d_{\beta}(p,q)| < D\eps$ if $|{\alpha}-{\beta}| < \delta$. Since this choice of $\delta$ does not depend on the choice of $p,q \in X$, the identity map is a $D\eps$-isometry, completing the proof.
\end{proof}
%    The proof also works when $\beta = 1$, making the necessary adjustments for one-sided continuity.

When applying this statement to the geometry of the smoothings $X_t$, we will use the following variant:

\begin{cor} \label{cor-sm-GH}
    Let $\{ (X_{\alpha},g_\alpha) \}$ be a family of compact Riemannian manifolds parametrized by $\alpha \in U \subset \mathbb{C}$.  Fix $\beta \in U$, and suppose that for each $\alpha$ there is a diffeomorphism $F_\alpha: X_\beta \rightarrow X_\alpha$. Suppose $F_\alpha^* g_\alpha \rightarrow g_\beta$ in the $L^\infty$ norm with respect to $g_\beta$ as $\alpha \rightarrow \beta$. Then $(X_{\alpha},d_{g_{\alpha}}) \to (X_{\beta},d_{g_{\beta}})$ in the Gromov--Hausdorff topology as $\alpha \to \beta$.
\end{cor}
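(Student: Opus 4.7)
The plan is to pull everything back to a single manifold via the diffeomorphisms $F_\alpha$ and then invoke Proposition~\ref{propn-GH-convergence} directly. The point is that, because Gromov--Hausdorff distance only sees the isometry class of a metric space, we can replace each $(X_\alpha,d_{g_\alpha})$ with an isometric copy living on $X_\beta$.

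First I would introduce the pulled-back family of Riemannian metrics $\tilde g_\alpha := F_\alpha^* g_\alpha$ on the fixed compact manifold $X_\beta$. Since $F_\alpha$ is a diffeomorphism, it is by construction a smooth isometry from $(X_\beta,\tilde g_\alpha)$ to $(X_\alpha,g_\alpha)$, hence also a bijective isometry of the associated length metric spaces:
\[
d_{\tilde g_\alpha}(p,q) = d_{g_\alpha}(F_\alpha(p),F_\alpha(q)) \qquad \text{for all } p,q \in X_\beta.
\]
In particular $(X_\alpha,d_{g_\alpha})$ and $(X_\beta,d_{\tilde g_\alpha})$ represent the same point in $\mathcal{M}$, so $d_{\mathrm{GH}}\bigl((X_\alpha,d_{g_\alpha}),(X_\beta,d_{g_\beta})\bigr) = d_{\mathrm{GH}}\bigl((X_\beta,d_{\tilde g_\alpha}),(X_\beta,d_{g_\beta})\bigr)$.

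Next I would apply Proposition~\ref{propn-GH-convergence} to the family $\{\tilde g_\alpha\}_{\alpha \in U}$ on the fixed compact manifold $X_\beta$. The hypothesis of the corollary is exactly that $\tilde g_\alpha \to g_\beta$ in the $L^\infty$ norm with respect to $g_\beta$ as $\alpha \to \beta$, which is precisely the continuity assumption needed by the proposition. Therefore $(X_\beta, d_{\tilde g_\alpha}) \to (X_\beta, d_{g_\beta})$ in the Gromov--Hausdorff topology as $\alpha \to \beta$, and combining this with the identification of the previous paragraph yields $(X_\alpha,d_{g_\alpha}) \to (X_\beta,d_{g_\beta})$ in the Gromov--Hausdorff topology, as claimed.

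There is no real obstacle in this argument; the only thing to check carefully is that the pullback is compatible with the length-space structure, i.e.\ that $F_\alpha$ being a Riemannian isometry implies that it is a metric-space isometry for the induced distance functions, which is immediate from the definition of $d_{g}$ as infimum of $g$-lengths of curves and the fact that diffeomorphisms bijectively correspond curves. The essential content of the corollary is therefore packaged entirely into Proposition~\ref{propn-GH-convergence}; the corollary is simply the natural way to state that proposition when the underlying smooth manifold is allowed to vary with the parameter, as it will in our applications to the smoothings $X_t$ via the maps $\Phi_t$.
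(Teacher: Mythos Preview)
Your proposal is correct and is exactly the argument the paper gives: pull back by $F_\alpha$ to the fixed manifold $X_\beta$, apply Proposition~\ref{propn-GH-convergence} to the family $F_\alpha^* g_\alpha$, and then use that $(X_\beta,F_\alpha^* g_\alpha)$ is isometric to $(X_\alpha,g_\alpha)$. The paper's proof is just a terser version of what you wrote.
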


\begin{proof}
    This follows by applying the proposition above to the family of metrics $F_\alpha^* g_{\alpha}$ on the fixed manifold $X_{\beta}$. Thus $(X_{\beta}, F_\alpha^* g_{\alpha}) \to (X_{\beta}, g_{\beta})$ in the Gromov--Hausdorff topology as $\alpha \to \beta$. Since $(X_{\beta}, F_\alpha^* g_{\alpha})$ is isometric to $(X_{\alpha},g_{\alpha})$, the result follows.
\end{proof}

\subsection{Small Resolution Metrics \texorpdfstring{$\what{g}_{\FLY,a}$}{g}}
\label{subsect-GH-large-SR}

In order to prove Theorem~\ref{thm-GH-cont-pos}, it suffices by Proposition~\ref{propn-GH-convergence} to show that each of the families of metrics is continuous in the $L^{\infty}$ norm. To that end, we will show in this subsection that the family $\what{g}_{\FLY,a}$ is continuous in the $L^{\infty}$ norm.

\begin{lem}
\label{lem-FLY-SR}
    The Fu--Li--Yau metrics $\{\what{g}_{\FLY,a} \mid a \in (0,1]\}$ on $\what{X}$ satisfy the continuity condition of Proposition \ref{propn-GH-convergence}.
    %are continuous in the $L^{\infty}$ norm.
\end{lem}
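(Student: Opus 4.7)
The plan is to verify the $L^\infty$ continuity hypothesis of Proposition~\ref{propn-GH-convergence} directly, by splitting $\what{X}$ into the tube $\{r\le\delta\}$ around the exceptional curves $E_1,\dots,E_k$ (on which property \ref{prop-FLY-I} applies) and its compact complement (on which the geometry stays away from the local-model degeneration).

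On the tube, property \ref{prop-FLY-I} reduces everything to the Candelas--de la Ossa family via $\what{g}_{\FLY,a} = R\,\what{g}_{\co,a}$. Property \ref{prop-SR-I} expresses $\what{g}_{\co,a} = a^2\,S_{a^{-1}}^*\,\what{g}_{\co,1}$ as the pullback of the fixed smooth metric $\what{g}_{\co,1}$ on $\what{V}$ under the smooth family of diffeomorphisms $S_{a^{-1}}$, which depends smoothly on $a$ for $a>0$. Since $\{r\le\delta\}$ is compact, this gives smooth, and in particular $L^\infty$, convergence $\what{g}_{\co,a}\to\what{g}_{\co,\beta}$ as $a\to\beta>0$ on this piece.

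On the compact complement $K=\{r\ge\delta/2\}$, I would invoke smooth dependence on $a$ of the gluing-plus-perturbation construction of $\what{\omega}_{\FLY,a}$. The small-resolution analogue of \eqref{fly-pre-per}--\eqref{fly-defn} builds $\what{\omega}_{\FLY,a}^2$ as a gluing of the rescaled CdlO local model to a fixed Kähler $(2,2)$-form, corrected by a Kodaira--Spencer-type term of the shape $\partial\bar{\partial}^{\dagger}\partial^{\dagger}\what{\gamma}_a - \bar{\partial}\partial^{\dagger}\bar{\partial}^{\dagger}\overline{\what{\gamma}_a}$, with $\what{\gamma}_a$ solving an elliptic equation involving the Kodaira--Spencer operator. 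All of these ingredients---the cut-off functions, the CdlO data, and the operator itself---depend smoothly on $a$ over any compact subinterval of $(0,1]$, so standard elliptic parameter regularity yields smooth dependence of $\what{\gamma}_a$ on $a$ and hence $\what{g}_{\FLY,a}\to\what{g}_{\FLY,\beta}$ in $C^{\infty}(K)$ as $a\to\beta$. Combined with the uniform equivalence of $\what{g}_{\FLY,a}$ and $\what{g}_{\FLY,\beta}$ for $a$ near $\beta$ (see Remark~\ref{rmk_L_inf}), this completes the verification of the hypothesis of Proposition~\ref{propn-GH-convergence}.

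The only subtle step is the parameter-dependent elliptic analysis on $K$: rigorously, one needs uniform invertibility of the Kodaira--Spencer operator on the relevant complement of its kernel for $a$ in a small neighborhood of $\beta>0$. This, however, is precisely the invertibility used in \cite{FLY12} to produce the correction term $\what{\gamma}_a$ in the first place, so it is available. The work in this lemma is therefore almost entirely bookkeeping: decomposing $\what{X}$, identifying the tube with an explicit scaling family, and reading off smooth dependence elsewhere from the FLY construction.
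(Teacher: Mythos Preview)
Your overall conclusion is correct, but you have misread the Fu--Li--Yau construction on the small resolution side. There is \emph{no} Kodaira--Spencer perturbation step for $\what{\omega}_{\FLY,a}$: the gluing of the scaled Candelas--de la Ossa model to the ambient Calabi--Yau metric already yields $d\what{\omega}_{\FLY,a}^2=0$, so the correction term $\what{\gamma}_a$ you invoke simply does not exist. The elliptic parameter regularity you flag as ``the only subtle step'' is therefore solving a non-problem. The PDE correction via the Kodaira--Spencer operator enters only on the smoothing side (cf.\ \eqref{fly-defn}), where the pulled-back glued form is not balanced for the complex structure $J_t$.

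The paper's argument exploits exactly this: since the gluing region is fixed and the ambient Calabi--Yau metric is $a$-independent, the difference $\what{\omega}_{\FLY,a}^2-\what{\omega}_{\FLY,b}^2$ is supported in a fixed compact neighborhood of the exceptional curves and is given there by the explicit expression \eqref{eqn-omega^2-FLY-difference}, which is visibly smooth in $(a,p)$ for $a$ in a compact subinterval of $(0,1]$. Uniform convergence then follows by compactness (Arzel\`a--Ascoli), and the $(n-1)$-th root passes through. Your split into tube and complement is fine, but on the complement outside the gluing region the metrics are literally equal, and on the gluing region the dependence on $a$ is still explicit --- no elliptic theory is needed anywhere.
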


\begin{proof}
    Let $b \in (0,1]$. Recall that the Fu--Li--Yau metrics are obtained via a gluing construction which interpolates between a multiple of the Candelas--de la Ossa metrics (near the $(-1,-1)$-curves) and the ambient Calabi--Yau metric (away from the $(-1,-1)$-curves) \cite{CPY21, FLY12}. The gluing region is independent of the parameter $a$, and $\what{\omega}_{\FLY,a}^2 - \what{\omega}_{\FLY,b}^2$ is supported on open sets around each $(-1,-1)$ curve, and in particular we have the following expression on the local models with $\| z \|^2 < 1$:
    \begin{equation}
    \label{eqn-omega^2-FLY-difference}
    \begin{aligned}
        \what{\omega}_{\FLY,a}^2 - \what{\omega}_{\FLY,b}^2 &= C\frac{2R^{-1}}{3} \sqrt{-1} \del \delbar \Big( \chi \Big( \frac{2R^2}{3} f_a(\|z\|^2) \Big) (\sqrt{-1} \del \delbar f_a (\|z\|^2) + 8a^2 \pi^* \omega_{FS}) \Big) \\
        &\qquad -  C\frac{2R^{-1}}{3} \sqrt{-1} \del \delbar \Big( \chi \Big( \frac{2R^2}{3} f_b(\|z\|^2) \Big) (\sqrt{-1} \del \delbar f_b (\|z\|^2) + 8b^2 \pi^* \omega_{FS}) \Big),
    \end{aligned}
    \end{equation}
    where $C$ and $R$ are constants, $\chi$ is a smooth function, and $f_s$ are a family of smooth functions such that $f_s(x) = s^2 f_1 (\frac{x}{s^3})$ and
    \begin{equation*}
        (x f_1')^3 + 6(x f_1')^2 = x^2.
    \end{equation*}
        (see \cite{CPY21}).
    It follows that the function $|\what{\omega}_{\FLY,a}^2 - \what{\omega}_{\FLY,b}^2|^2_{g_{\FLY,b}}$ is smooth in $a$ and $p$.

    Since $b \neq 0$, we can pick some $h > 0$ such that $I = [b-h,b+h] \subset (0,1]$ (or $I = [1-h,1] \subseteq (0,1]$ in the case where $b = 1)$. One can check that in coordinates around a point $p \in \what{X}$, each component in \eqref{eqn-omega^2-FLY-difference} is smooth in $a$ and $p$. In particular, differentiating the function $f_a (\|z\|^2)$ involves uniform bounds since we have the expression $f_a(\|z\|^2) = a^2 f_1 (\frac{\|z\|^2}{a^3})$ and also because $a > 0$ in our interval so that $\frac{\|z\|^2}{a^3}$ lies in a compact set.
        
    It follows that the covariant derivative of $|\what{\omega}_{\FLY,a}^2 - \what{\omega}_{\FLY,b}^2|^2_{g_{\FLY,b}}$ is continuous on $I \times \what{X}$. By compactness, we obtain uniform boundedness of the covariant derivative on $I$. By a corollary of the Arzel\`{a}--Ascoli theorem, the pointwise convergence of the function $|\what{\omega}_{\FLY,a}^2 - \what{\omega}_{\FLY,b}^2|^2_{g_{\FLY,b}}$ is actually uniform. 
    
    A positive $(n-1,n-1)$-form has a unique $(n-1)$-th root and this is determined in a continuous fashion (see \eg \cite{Michelsohn}). It follows that $\sup |\what{g}_{\co,a} - \what{g}_{\co,b}|_{\what{g}_{\co,b}}$ approaches $0$ as $a \rarr b$.
\end{proof}

We can now apply Proposition \ref{propn-GH-convergence} to this path of spaces to obtain the desired Gromov--Hausdorff continuity.

\subsection{Smoothing Metrics \texorpdfstring{$g_{\FLY,t}$}{g}}

We prove the analogous results of Section \ref{subsect-GH-large-SR} for the smoothings.

Let $\mu: \mathcal{X} \rightarrow \Delta$ be a holomorphic smoothing of $X_0$ and let $X_t = \mu^{-1}(t)$. Fix $s \neq 0$ and consider the smoothings $X_t$ nearby $X_s$. As this is a smooth family of complex manifolds, by Ehresmann's lemma there exists a smoothly varying family of diffeomorphisms $F_t: X_s \rightarrow X_t$ such that $F_s$ is the identity map. 

Recall that the Fu--Li--Yau \cite{FLY12} metrics on the smoothings are obtained by: 1) pullback and a gluing construction leading to a pre-perturbed metric $g_t$ followed by 2) a perturbation to a balanced metric $g_{\FLY,t}$.

    We see that the expression \eqref{fly-pre-per} for $\omega^2_t$ is smooth in the parameter $t$ and can employ the method in the proof of Lemma \ref{lem-FLY-SR}. The metric $g_t$ on $X_t$ is extracted from $\omega_t$ via a square root construction (see \eg \cite{Michelsohn}), and since the dependence on $t$ is explicit here and $F_t: X_s \rightarrow X_t$ varies smoothly with $F_s=\id$, we can see that
    \[
\lim_{t \rightarrow s} \sup_{X_s} | F_t^* g_t - g_s |_{g_s} = 0.
    \]
Corollary \ref{cor-sm-GH} applies to $(X_t,g_t)$, however these are not the Fu--Li--Yau metrics as these do not satisfy $d \omega_t^2=0$. For this we need to estimate the correction term $\gamma_t$ appearing in \eqref{fly-defn}. As this term $\gamma_t$ comes from solving $E_t(\gamma_t) = \bar{\partial} \omega_t^2$, we need to deduce from the fact that the right-hand sides $\bar{\partial} \omega_t^2$ vary smoothly for $t \neq 0$ that the solutions $\gamma_t$ vary smoothly. We first need to study some properties of the Kodaira--Spencer operator $E_t$ (which in this case is determined with respect to the auxiliary metric $\omega_t$).

\begin{lem}
Let $\what{X} \rightarrow X_0 \rightsquigarrow X_t$ be a conifold transition from an initial K\"ahler Calabi--Yau threefold with finite fundamental group. Endow $X_t$ with the auxiliary Hermitian metric $\omega_t$ from the Fu--Li--Yau construction. Then $E_t: \Lambda^{2,3}(X_t) \rightarrow \Lambda^{2,3}(X_t)$ satisfies $\ker E_t = \{ 0 \}$ for all $0<|t| \ll 1$.
\end{lem}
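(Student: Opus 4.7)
The plan is to realize $E_t$ as a nonnegative self-adjoint operator whose kernel admits a cohomological description, and then to invoke the K\"ahler Calabi--Yau hypothesis on $\hat X$, together with the flat family structure of $\mathcal{X}\to\Delta$, to show that the relevant cohomology vanishes for $|t|$ sufficiently small.

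First I would observe that each summand of $E_t$ has the form $AA^*$ for the $L^2$ inner product induced by $\omega_t$, so integration by parts yields
\begin{equation*}
\langle E_t \gamma,\gamma\rangle = \|\bar\partial^\dagger\partial^\dagger\gamma\|^2 + \|\bar\partial^\dagger\partial\gamma\|^2 + \|\partial\gamma\|^2,
\end{equation*}
while $\bar\partial\gamma\in\Lambda^{2,4}(X_t)=0$ automatically by bidegree on a threefold. Thus $\gamma\in\ker E_t$ is equivalent to $\partial\gamma=0$ and $\bar\partial^\dagger\partial^\dagger\gamma=0$. The $(1,3)$-form $\partial^\dagger\gamma$ is then $\bar\partial$-closed (automatic since $\Lambda^{1,4}(X_t)=0$) and $\bar\partial^\dagger$-closed, hence $\bar\partial$-harmonic, and it represents a class in $H^{1,3}_{\bar\partial}(X_t)\cong H^3(X_t,\Omega^1_{X_t})$. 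Serre duality combined with the triviality of $K_{X_t}$ identifies this group with $H^0(X_t,T^{1,0}X_t)^*$, so the first task is to rule out holomorphic vector fields on $X_t$.

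For this, I would invoke the slope stability of $T^{1,0}X_t$ with respect to the balanced class $[\omega_{\FLY,t}^2]$, established in \cite{CPY21} by deforming the stability of $(\hat X,\hat\omega_{\rm CY})$ coming from the de Rham decomposition. A nonzero holomorphic vector field on $X_t$ generates a rank-one saturated subsheaf $\mathcal{F}\subset T^{1,0}X_t$, which on smooth $X_t$ is a line bundle $\mathcal{O}(D)$ for some effective divisor $D\geq 0$ admitting a nonzero section; since $\omega_{\FLY,t}^2$ is positive on complex subvarieties, $\deg_{\omega_{\FLY,t}}(\mathcal{F})=\int_D\omega_{\FLY,t}^2\geq 0$, contradicting $\mu(\mathcal{F})<\mu(T^{1,0}X_t)=0$. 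Hence $H^0(X_t,T^{1,0}X_t)=0$ and so $\partial^\dagger\gamma=0$.

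Combining $\partial\gamma=0$, $\bar\partial\gamma=0$, and $\partial^\dagger\gamma=0$, the form $\gamma$ is $\partial$-harmonic in bidegree $(2,3)$, so $\dim\mathcal{H}^{2,3}_\partial(X_t,\omega_t)=h^{3,2}_{\bar\partial}(X_t)$ by complex conjugation, and a second application of Serre duality gives $h^{3,2}_{\bar\partial}(X_t)=h^1(X_t,\mathcal{O}_{X_t})$. To show this final Hodge number vanishes, I would use that the ODPs of $X_0$ are rational singularities, so that the Leray spectral sequence for the small resolution $\pi:\hat X\to X_0$ yields $H^i(X_0,\mathcal{O}_{X_0})\cong H^i(\hat X,\mathcal{O}_{\hat X})$; the right-hand side vanishes for $i=1$ since $\hat X$ is K\"ahler Calabi--Yau with finite fundamental group (so $b_1(\hat X)=0$ and Hodge theory gives $h^{0,1}(\hat X)=0$). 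Grauert's upper semi-continuity theorem applied to the proper flat family $\mu:\mathcal{X}\to\Delta$ then gives $h^1(X_t,\mathcal{O}_{X_t})\leq h^1(X_0,\mathcal{O}_{X_0})=0$ for $|t|$ small, forcing $\gamma=0$. The main technical point I expect to be delicate is ensuring that Grauert's theorem and Serre duality propagate cleanly across the singular central fiber; this should be manageable because the ODPs are normal and Gorenstein and the relative dualizing sheaf of $\mathcal{X}/\Delta$ is trivial by \cite{Fri86, CPY23}.
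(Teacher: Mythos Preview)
Your proof is correct and follows the same skeleton as the paper: integrate by parts to reduce $\ker E_t=0$ to the two vanishing statements $H^{1,3}_{\bar\partial}(X_t)=0$ and $H^{3,2}_{\bar\partial}(X_t)=0$, then conjugate to land in $\bar\partial$-harmonic forms. The differences lie only in how you justify these two vanishings.

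For $H^0(X_t,T^{1,0}X_t)=0$, the paper cites the Hartogs-type argument from \cite{FLY12}: a holomorphic vector field on $X_t$ pulls back to one on $(X_0)_{\rm reg}\cong \hat X\setminus\cup E_i$, extends across the curves by Hartogs, and then vanishes since $H^0(\hat X,T\hat X)=0$ on the K\"ahler Calabi--Yau side. Your route via slope stability of $T^{1,0}X_t$ is valid but heavier: it presupposes stability (not just the polystability that follows from the existence of the HYM metric in \cite{CPY21}), so you are implicitly invoking an additional irreducibility input. The Hartogs argument is more elementary and self-contained here. For $H^{3,2}_{\bar\partial}(X_t)=0$, the paper quotes Friedman's Lemma~8.2 in \cite{Fri91} to pass from $H^2(\hat X,\mathcal O_{\hat X})=0$ to $H^2(X_t,\mathcal O_{X_t})=0$; your argument (rational ODP singularities $\Rightarrow H^i(X_0,\mathcal O_{X_0})\cong H^i(\hat X,\mathcal O_{\hat X})$, then Grauert semicontinuity) is essentially an unpacking of that lemma, phrased through $h^1(\mathcal O)$ rather than $h^2(\mathcal O)$, which are equal by Serre duality and triviality of $K_{X_t}$. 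Finally, where the paper writes $\partial^\dagger\chi=\bar\partial\beta$ and pairs against $\beta$, you instead observe directly that $\partial^\dagger\gamma$ is $\bar\partial$-harmonic and invoke Hodge theory; this is a cosmetic difference.
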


\begin{proof}
Let $\chi \in \Lambda^{2,3}(X_t)$ be such that $\chi \in \ker E_t$. Integrating by parts over the identity $\langle E_t \chi, \chi \rangle = 0$ implies
\[
\partial \chi = 0, \quad \bar{\partial}^\dagger \partial^\dagger \chi = 0.
\]
Next, we note that $\partial^\dagger \chi \in \Lambda^{1,3}(X_t)$ and so $\bar{\partial} (\partial^\dagger \chi) = 0$ by type consideration. It is noticed in \cite{FLY12} that
\[
H^{1,3}(X_t,\mathbb{C}) = H^0(X_t,T X_t) = 0
\]
by using $H^{1,3}(\what{X},\mathbb{C})=0$ on the small resolution together with Hartogs' lemma; we refer to \cite{FLY12} for the proof. Therefore
\[
\partial^\dagger \chi = \bar{\partial} \beta
\]
and so
\[
\langle \partial^\dagger \chi, \partial^\dagger \chi \rangle = \langle \bar{\partial}^\dagger \partial^\dagger \chi, \beta \rangle= 0.
\]
We conclude that if $\chi \in \Lambda^{2,3} \cap \ker E_t$, then $\partial \chi = \partial^\dagger \chi = 0$. It follows that $\psi = \bar{\chi} \in \Lambda^{3,2}(X_t)$ solves
\[
\Delta_{\bar{\partial}} \psi = 0, \quad \Delta_{\bar{\partial}} = \bar{\partial} \bar{\partial}^\dagger + \bar{\partial}^\dagger \bar{\partial}.
\]
By the Hodge theorem, this defines an element in Dolbeault cohomology, and since $X_t$ has trivial canonical bundle then $H^{3,2}(X_t,\mathbb{C}) = H^2(X_t,\Omega^3_{X_t}) = H^2(X_t,\mathcal{O}_{X_t})$. Lemma 8.2 in \cite{Fri91} states that if $H^2(\what{X},\mathcal{O}_{\what{X}}) = 0$ then $H^2(X_t,\mathcal{O}_{X_t}) =0$. Since 
\[
H^2(\what{X},\mathcal{O}_{\what{X}}) = H^{0,2}(\what{X},\mathbb{C})= H^{0,1}(\what{X},\mathbb{C})= 0
\]
on the initial K\"ahler Calabi--Yau threefold with finite fundamental group, we conclude that $H^{3,2}(X_t,\mathbb{C}) =0$ and so $\psi = 0$.
\end{proof}

We will also need some uniform estimates as $t \rightarrow s$. This is a standard argument given that the kernel of $E$ is trivial and $X_s$ is smooth.

\begin{lem} Fix $s>0$. There exists $\epsilon>0$ and $C>1$ such that
\begin{equation} \label{E-poincare}
\| \gamma_t \|_{C^{4,\alpha}(X_t)} \leq C \| E_t (\gamma_t) \|_{C^\alpha(X_t)} 
\end{equation}
for all $\gamma_t \in \Lambda^{2,3}(X_t)$ with $|t-s|<\epsilon$. Here each norm on $X_t$ is taken with respect to the auxiliary Hermitian metrics $\omega_t$ from the Fu--Li--Yau construction.
\end{lem}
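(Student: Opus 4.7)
The plan is to reduce everything to a fixed manifold and then run the standard blow-up argument built on the previous lemma $\ker E_t = \{0\}$. Since the family $\mu \colon \mathcal{X} \to \Delta$ is a proper submersion away from $t=0$, Ehresmann's lemma furnishes a smooth family of diffeomorphisms $F_t \colon X_s \to X_t$ defined for $t$ in a small disk around $s$, with $F_s = \mathrm{id}$. Pulling back via $F_t$ converts the triple $(J_t, \omega_t, E_t)$ on $X_t$ into a triple $(\tilde J_t, \tilde\omega_t, \tilde E_t)$ on the fixed compact manifold $X_s$. Because the complex structures $J_t$ vary smoothly in $t$ for $t \neq 0$ and because the Fu--Li--Yau pre-perturbed metrics $\omega_t$ are given by the explicit expression \eqref{fly-pre-per} which is smooth in $t$ for $t$ near $s \neq 0$, the coefficients of the fourth-order elliptic operators $\tilde E_t$ depend continuously on $t$ in $C^\infty$.

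With the problem transferred to $X_s$, I would first apply interior-and-global Schauder theory to the elliptic operator $\tilde E_t$ to obtain, for each fixed $t$, an estimate of the form
\begin{equation*}
\|\tilde\gamma\|_{C^{4,\alpha}(X_s)} \le C_t \bigl( \|\tilde E_t \tilde\gamma\|_{C^\alpha(X_s)} + \|\tilde\gamma\|_{C^0(X_s)} \bigr).
\end{equation*}
Since the coefficients of $\tilde E_t$ vary continuously in $t$ (with a uniform ellipticity constant for $t$ in a small neighbourhood of $s$), the Schauder constants may be chosen uniform in $t$ on some small disk $|t-s| < \epsilon$, so that the estimate above holds with a single constant $C$ replacing $C_t$.

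The remaining task is to upgrade this to a genuine Poincaré inequality, i.e. to drop the $\|\tilde\gamma\|_{C^0}$ term. Here the main (and only mildly technical) obstacle is to run a standard contradiction argument while controlling the $t$-dependence. Suppose no uniform constant $C$ exists; then there are sequences $t_n \to s$ and $\tilde\gamma_n \in \Lambda^{2,3}(X_s)$ normalized so that $\|\tilde\gamma_n\|_{C^{4,\alpha}} = 1$ but $\|\tilde E_{t_n} \tilde\gamma_n\|_{C^\alpha} \to 0$. The uniform Schauder inequality then forces $\|\tilde\gamma_n\|_{C^0}$ to be bounded below. By Arzel\`a--Ascoli, after extracting a subsequence $\tilde\gamma_n \to \tilde\gamma_\infty$ in $C^{4,\alpha'}$ for any $\alpha' < \alpha$, and $\tilde\gamma_\infty$ is nonzero. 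Passing $\tilde E_{t_n} \tilde\gamma_n \to 0$ to the limit using continuous $t$-dependence of the coefficients yields $\tilde E_s \tilde\gamma_\infty = 0$, so $\gamma_\infty := (F_s^{-1})^* \tilde\gamma_\infty$ is a nonzero element of $\ker E_s$, contradicting the previous lemma.

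Finally, transferring the estimate back to $X_t$ via $F_t$ costs only a constant factor because $F_t^* \omega_t \to \omega_s$ in $C^\infty(X_s)$ as $t \to s$, so the $C^{k,\alpha}$ norms on $X_t$ and $X_s$ are uniformly equivalent for $|t-s| < \epsilon$ after shrinking $\epsilon$ if necessary. This yields the claimed inequality \eqref{E-poincare}.
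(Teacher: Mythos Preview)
Your proposal is correct and follows essentially the same route as the paper: uniform Schauder estimates for the smoothly varying family $E_t$, followed by the standard blow-up/contradiction argument using Arzel\`a--Ascoli and the previous lemma $\ker E_s=\{0\}$. The only difference is that you are more explicit about pulling back to the fixed manifold $X_s$ via Ehresmann trivializations, which the paper leaves implicit when it says the Schauder estimates ``hold uniformly for all $t$ close to $s$.''
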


\begin{proof}
Since the compact manifolds $X_t$ deform smoothly to the compact manifold $X_s$, the Schauder estimates
\begin{equation} \label{E-schauder}
\| \gamma_t \|_{C^{4,\alpha}(X_t)} \leq C (\| \gamma_t \|_{C^0(X_t)} + \| E_t (\gamma_t) \|_{C^\alpha(X_t)} )
\end{equation}
hold uniformly for all $t$ close to $s$ where the norms on $X_t$ are taken with respect to $\omega_t$. We would like to upgrade this estimate to \eqref{E-poincare}. 

Suppose \eqref{E-poincare} is false, so that there exists a sequence $t_i \rightarrow s$ and constants $C_i \rightarrow \infty$ with
\[
\| \gamma_{t_i} \|_{C^{4,\alpha}(X_{t_i})} \geq C_i \| E_{t_i} (\gamma_{t_i}) \|_{C^\alpha(X_{t_i})}. 
\]
Consider $\tilde{\gamma}_i = \gamma_{t_i} / \| \gamma_{t_i} \|_{C^{4,\alpha}(X_{t_i})}$. As
\[
\| \tilde{\gamma}_i \|_{C^{4,\alpha}(X_{t_i})} = 1, \quad \| E_{t_i} (\tilde{\gamma}_i) \|_{C^\alpha(X_{t_i})} \leq C_i^{-1}
\]
we may apply the Arzel\`{a}--Ascoli theorem to extract a convergent subsequence to a limit $\gamma_\infty$ solving
\[
E_s(\gamma_\infty)=0.
\]
By the previous lemma, $\gamma_\infty = 0$. This contradicts estimate \eqref{E-schauder}, which implies
\[
1 \leq C (\| \tilde{\gamma}_i \|_{C^0(X_{t_i})} + \| E_t (\tilde{\gamma}_i) \|_{C^\alpha(X_{t_i})} )
\]
and so $\frac{1}{2C} \leq \| \tilde{\gamma}_i \|_{C^0(X_{t_i})}$ for all $t_i$ close to $s$ and thus $\| \gamma_\infty \|_{C^0(X_s)} > 0$.
\end{proof}

Returning to the construction of the metrics $\omega_{\FLY,t}$, we claim $F_t^* \gamma_t \rightarrow \gamma_s$ in $C^4(X_s)$ as $t \rightarrow s$. Suppose not, so that there exists $\epsilon>0$ with
\begin{equation} \label{gamma-seq}
\| F_t^* \gamma_t - \gamma_s \|_{C^4(X_s)} \geq \epsilon
\end{equation}
along a subsequence $t_i \rightarrow s$. The uniform elliptic estimate \eqref{E-poincare} implies $\| \gamma_t \|_{C^{4,\alpha}(X_t)} \leq C$, and so $F_t^* \gamma_t$ is also bounded on $(X_s,g_s)$. Applying the Arzel\`{a}--Ascoli theorem, there is a subsequence converging to a limit $\gamma_\infty$ on $X_s$ solving
\[
E_s(\gamma_\infty) = \bar{\partial} \omega_s^2.
\]
It follows that
\[
E_s (\gamma_\infty-\gamma_s) = 0
\]
and since $\ker E_s = \{ 0 \}$, we conclude $\gamma_\infty = \gamma_s$, which contradicts \eqref{gamma-seq}.

Using that $F_t^* \gamma_t \rightarrow \gamma_s$, taking a square root of \eqref{fly-defn} gives a family of metrics $\omega_{\FLY,t}$ varying continuously as $t \rightarrow s$, and so
   \[
\lim_{t \rightarrow s} \sup_{X_s} | F_t^* g_{\FLY,t} - g_{\FLY,s} |_{g_s} = 0.
    \]
By Remark \ref{rmk_L_inf}, this convergence also holds with respect to the Fu--Li--Yau metrics $g_{\FLY,s}$ and thus Corollary \ref{cor-sm-GH} applies to $(X_t,g_{\FLY,t})$. This proves that $(X_t,d_{\FLY,t}) \rightarrow (X_s,d_{\FLY,s})$ in the Gromov--Hausdorff sense as $t \rightarrow s$.

\subsection{Small Resolution Metrics  \texorpdfstring{$\what{H}_a$}{H}}
We return to the small resolution $\what{X} \rightarrow X_0$, where there is a family of metrics $\what{H}_a$ satisfying the Hermitian--Yang--Mills equation
\[
F_{\what{H}_a} \wedge \what{\omega}_{\FLY,a}^2 = 0.
\]
We will show that for $b>0$ fixed, then 
\begin{equation} \label{HYM-a->b}
\lim_{a \rightarrow b}  \sup_{\what{X}}  \| \what{H}_a - \what{H}_b \|_{\what{H}_b} \rightarrow 0.
\end{equation}
Suppose this is false. Then there exists $\epsilon>0$ and a sequence $a_i \rightarrow b$ such that
\begin{equation*}
\| \what{H}_{a_i} - \what{H}_b \|_{\what{H}_b} \geq \epsilon, \quad \sqrt{-1} \Lambda_{\what{\omega}_{\FLY,a_i}} F_{\what{H}_{a_i}} = 0 
\end{equation*}
for all $a_i$. By the estimates in Proposition \ref{propn-HYM-ests-SR}, we have
\begin{equation*}
    C^{-1} \cdot \what{g}_{\FLY, b} \leq \what{H}_a \leq C \cdot \what{g}_{\FLY,b}.
\end{equation*}
Standard estimates for the Hermitian--Yang--Mills equations then give
\begin{equation} \label{hym-std-est}
|\nabla \what{H}_a|_{\what{g}_b} + |\nabla^2 \what{H}_a|_{\what{g}_b} \leq C.
\end{equation}
For a proof of these standard estimates, see \eg Proposition 3.9 with $r \equiv 1$ in \cite{CPY23} and the higher order estimates which follow after, or in the K\"ahler case Appendix C of \cite{jacobwalpuski}.
By the Arzel\`{a}--Ascoli theorem, we may extract a subsequence $\what{H}_{a_{i_k}}$ converging to a limit $H_\infty$ such that 
\begin{equation} \label{a->bcontra}
\| H_{\infty} - \what{H}_b \|_{\what{H}_b} \geq \epsilon, \quad \sqrt{-1} \Lambda_{\what{\omega}_{\FLY,b}} F_{H_\infty} = 0.
\end{equation}
This uses that $\what{\omega}_{\FLY, a_{i_k}} \rightarrow \what{\omega}_{\FLY,b}$ as $a_{i_k} \rightarrow b$, which holds by properties of the Fu--Li--Yau metrics. We now have two Hermitian--Yang--Mills metrics $H_\infty$ and $\what{H}_b$ with respect to $\what{\omega}_{\FLY, b}$. Consider $H_\infty= e^u \what{H}_b$. A classic calculation (see \eg \cite{uhlenbeckyau86}, or \eqref{HYM-int-est} below) implies
\begin{equation} \label{uniqueness-ab}
\Delta_{\what{\omega}_{\FLY, b}} |u|^2_{\what{H}_b} \geq C^{-1} |\bar{\partial} u|^2_{\what{\omega}_{\FLY, b},\what{H}_b}.
\end{equation}
It follows by the maximum principle that $\bar{\partial} u = 0$ and hence $u$ is a holomorphic endomorphism of $T^{1,0} \what{X}$. It is a well-known corollary of Yau’s theorem \cite{yau78} that on a Calabi-Yau threefold $\what{X}$ with finite fundamental group, there holds
\begin{equation} \label{no-endos}
H^0(\what{X},{\rm End} \, T^{1,0} \what{X}) = \{ c \, {\rm id} : c \in \mathbb{C} \}.
\end{equation}
Therefore $H_\infty = \lambda H_b$. By the normalization condition \eqref{hym-a-norma}, it follows that $\lambda=1$. This contradicts \eqref{a->bcontra}, and thus \eqref{HYM-a->b} is proved, and we conclude that $(\what{X},\what{d}_{\what{H}_a}) \rightarrow (\what{X},\what{d}_{\what{H}_b})$ in the Gromov--Hausdorff sense as $a \rightarrow b$.

Here is one way to see \eqref{no-endos}. By Yau’s theorem, $\what{X}$ admits a K\"ahler Ricci-flat metric. A holomorphic endomorphism of $T^{1,0} \what{X}$ is parallel with respect to the Levi-Civita connection by a Bochner argument. This parallel endomorphism defines a ${\rm Hol}_p$-linear map on $T_p \what{X}$, and hence by Schur’s lemma must be a scalar multiple of the identity at each point $p \in \what{X}$; this scalar factor is the same at all points by holomorphy. Here we used that the restricted holonomy group ${\rm Hol}_p = {\rm SU}(3)$ and acts irreducibly on $T_p \what{X}$ since $\what{X}$ has finite fundamental group by the de Rham splitting theorem.

\subsection{Smoothing Metrics  \texorpdfstring{$H_t$}{H}}
Fix $s \neq 0$ and consider the smoothings $X_t$ near the smooth fiber $X_s$ with smoothly varying family of diffeomorphisms $F_t: X_s \rightarrow X_t$ with $F_s$ the identity. The metrics $H_t$ also satisfy continuity of the form
\begin{equation}
    \lim_{t \rightarrow s} \| F_t^* H_t - H_s \|_{H_s} =0.
\end{equation}
The proof is similar to the arguments given before: suppose $F_t^* H_t$ does not converge to $H_s$ as $t \rightarrow s$ and extract a converging subsequence via the estimates \eqref{hym-sm-est} and \eqref{hym-std-est}. The limit solves the Hermitian--Yang--Mills equation, and by uniqueness and normalization then this limit must be $H_s$, which is a contradiction.

The main difference compared to the argument given above in \eqref{uniqueness-ab} is the step which establishes
\begin{equation} \label{no-endos2}
H^0(X_t,{\rm End} \, T^{1,0} X_t) = \{ c \, {\rm id} : c \in \mathbb{C} \},
\end{equation}
for all $t$ small enough. This follows from a well-known argument as soon as we show that $T^{1,0} X_t$ is a stable bundle: stable vector bundles are simple. Since we know that $T^{1,0} X_t$ admits a Hermitian-Yang-Mills metric, this bundle is polystable, and hence it remains to show that it cannot split holomorphically as $T^{1,0} X_t=A_t \oplus B_t$. Suppose by contradiction that there exists a sequence $t_i \rightarrow 0$ such that the tangent bundle splits, and after a subsequence and relabeling we may assume ${\rm rk} A_{t_i} = 1$. Define from such data a sequence of holomorphic endomorphisms $h_i \in H^0(X_{t_i}, {\rm End} \, T^{1,0} X_{t_i})$ by
\[
h_{t_i} =
\begin{cases}
  \mathrm{id} & \text{on } A_{t_i}, \\[4pt]
  0 & \text{on } B_{t_i}.
\end{cases}
\]
By compactness of holomorphic functions, on compact sets away from the singular set we may take a subsequential limit uniformly. By an exhaustion argument, a subsequence of the $t_i$ converges to a limiting endomorphism
\[
h_\infty \in H^0(X_{0,{\rm reg}}, {\rm End} \, T^{1,0} X_{0, {\rm reg}}).
\]
Taking pointwise limits of ${\rm Tr} \, h_{t_i}$ and $\det \, h_{t_i}$ gives
\begin{equation} \label{ti-limit-nosplit}
{\rm Tr} \, h_\infty = 1, \quad \det h_\infty = 0.
\end{equation}
This defines a holomorphic endomorphism of the tangent bundle on $\what{X} \backslash E$, and by Hartogs’ theorem this endomorphism extends to
\[
h_\infty \in H^0 (\what{X}, {\rm End} \, T^{1,0} \what{X}).
\]
We conclude by \eqref{no-endos} that $h_\infty = \lambda {\rm id}$, which contradicts \eqref{ti-limit-nosplit}.

\section{Gromov--Hausdorff Convergence in the Singular Case}
\label{sect-GH-conv-sing}

In this section, we extend Theorem~\ref{thm-GH-cont-pos}, and show that conifold transitions with the Fu--Li--Yau metrics and the Hermitian--Yang--Mills metrics are continuous in the Gromov--Hausdorff topology through the singular conifold at $t=a=0$. That is, we show that:

\begin{thm}
\label{thm-GH-sing}
The following four convergences hold in the Gromov--Hausdorff topology:
\begin{equation*}
\begin{matrix}
    \text{\underline{As $a \to 0$:}} & \text{\underline{As $t \to 0:$}}\\
    % (\what{V}, \what{g}_{\mathrm{co},a}) \rightarrow (V_0, g_{\mathrm{co},0}), & \quad (V_t,g_{\co,t}) \rightarrow (V_0,g_{\co,0})\\
    (\what{X}, \what{d}_{\FLY,a}) \rightarrow (X_0, d_{\FLY,0}), & (X_t, d_{\FLY,t}) \rightarrow (X_0, d_{\FLY,0})\\
    (\what{X},\what{d}_{\what{H}_a}) \rightarrow (X_0,d_{H_0}), & \quad (X_t,d_{H_t}) \rightarrow (X_0,d_{H_0}).
\end{matrix}
\end{equation*}
Therefore the maps $[0,1] \to \cal{M}$ given by
    \begin{enumerate}[label=(\roman*)]
        \item $a \mapsto (\what{X},\what{d}_{\FLY,a})$,
        \item $a \mapsto (\what{X},\what{d}_{H_a})$,
    \end{enumerate}
    and the maps $\Delta_\epsilon \to \cal{M}$ given by
    \begin{enumerate}[label=(\roman*)]\addtocounter{enumi}{2}
            \item $t \mapsto (X_t,d_{\FLY,t})$,
        \item $t \mapsto (X_t,d_{H_t})$,
    \end{enumerate}
    are continuous and agree at $a=t=0$.
\end{thm}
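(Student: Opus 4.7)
The plan is to reduce each of the four Gromov--Hausdorff convergences at $a=t=0$ to an explicit $\epsilon$-isometry obtained by ``collapsing the singular tube''. Given $\epsilon>0$, fix a small $R=R(\epsilon)$ and define $F\colon \what{X}\to X_0$ by the blow-down $\pi$ on $\{r\geq R\}$ and by sending each connected component of $\what{T}(R) = \{r<R\}$ to the singular point $s_i$ it contains; define $F_t\colon X_t\to X_0$ analogously with $\Phi_t^{-1}$ in place of $\pi$. Since continuity on the punctured parameter intervals is already Theorem~\ref{thm-GH-cont-pos}, only convergence to the singular fiber needs proof; the continuity of the maps $[0,1]\to\mathcal{M}$ and $\Delta_\epsilon\to\mathcal{M}$ will then follow.

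\textbf{Tube diameter collapse.} The central lemma to prove is that, for each fixed $R$,
\begin{equation*}
 \diam_{\FLY,a}(\what{T}(R)) \longrightarrow \diam_{g_{\co,0}}(D_0(R)) \text{ as } a\to 0, \qquad \diam_{\FLY,t}(T_t(R)) \longrightarrow \diam_{g_{\co,0}}(D_0(R)) \text{ as } t\to 0,
\end{equation*}
together with the analogous statements for the Hermitian--Yang--Mills metrics $\what{H}_a$, $H_t$ and $H_0$. For the balanced metric on the small resolution, Property~\ref{prop-FLY-I} identifies $\what{\omega}_{\FLY,a}$ with a fixed multiple of $\what{\omega}_{\co,a}$ on $\{r<\delta\}$, so the scaling normalization~\ref{prop-SR-I} and the asymptotic decay~\ref{prop-SR-II} deliver this estimate essentially by the warm-up Theorem~\ref{thm-CdlO-SR-GH}. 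The smoothing case is parallel using Properties~\ref{prop-FLY-Ism}, \ref{prop-SM-I}, \ref{prop-SM-II} and Theorem~\ref{thm-CdlO-SM-GH}, after absorbing the $C|t|^{2/3}$ correction in \eqref{eqn-FLY-t-est}. For the HYM metrics, the uniform equivalences $C^{-1}g_{\FLY}\leq H\leq Cg_{\FLY}$ from Proposition~\ref{propn-HYM-ests-SR} and \eqref{hym-sm-est} transfer the bounds directly, and the same equivalence holds for $H_0$ on $(X_0)_{\mathrm{reg}}$, so the $d_{H_0}$-diameter of $D_0(R)\setminus\{s_i\}$ also shrinks to zero with $R$.

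\textbf{Assembling the $\epsilon$-isometry.} With the diameter lemma in hand, choose $R'\in(R/2,R)$ so that the uniform convergences~\ref{prop-FLY-II}, \ref{prop-FLY-IIsm}, and the full-sequence HYM convergence from Appendix~\ref{appendix-hym} guarantee $\sup_{\{R'\leq r\leq 2R\}}|g_\alpha - g_0|_{g_0}<\epsilon$ for the parameter sufficiently small. For $p,q$ in the source, a shortest curve either stays in the complement $\{r\geq R'\}$, where the argument of Proposition~\ref{propn-GH-convergence} applied on this compact annulus yields $|d_{\mathrm{src}}(p,q)-d_0(F(p),F(q))|<C\epsilon$, or it enters $\what{T}(R')$, in which case the tube-collapse lemma lets one splice in a curve of length $O(\epsilon)$ joining the entry and exit points on $\partial\what{T}(R')$. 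The reverse comparison for curves in $X_0$ is symmetric, and surjectivity of $F$ onto $X_0$ is automatic; hence $F$ (resp.\ $F_t$) is a $(C\epsilon)$-isometry for $a$ (resp.\ $|t|$) sufficiently small.

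\textbf{Main obstacle.} The delicate step is the tube diameter collapse itself. A priori, distances inside $\what{T}(R)$ with respect to $\what{g}_{\FLY,a}$ could blow up as the exceptional curves contract; what rescues the situation is the explicit match of $\what{\omega}_{\FLY,a}$ with a multiple of the Candelas--de la Ossa metric on a fixed neighborhood of the singular set (Property~\ref{prop-FLY-I}) combined with the scaling \ref{prop-SR-I}, both of which are $a$-independent. Once the diameter collapse is proved for the balanced metrics on both sides of the transition, the HYM cases are a consequence of the uniform equivalence with the balanced metrics, so that the HYM diameters of the tubes are at most a constant multiple of the balanced ones and shrink with $R$. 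This is precisely the Song--Weinkove-style ``exceptional set contraction'' mentioned in the introduction.
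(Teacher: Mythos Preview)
Your proposal is correct and follows essentially the same Song--Weinkove-style strategy as the paper: establish diameter collapse of the tubes around the singular/exceptional set, use uniform convergence of the metrics on the complement, and splice curves to build an $\epsilon$-isometry. The paper packages these ingredients into a general ``Main Lemma'' (Lemma~\ref{lem-SM-bigbox}) preceded by a curve-reduction lemma (Lemma~\ref{lem-reduced-curve}) that handles curves passing through multiple tubes; your sketch compresses this into the dichotomy ``stays outside / enters the tube'', which is the same idea with the multi-tube bookkeeping left implicit.

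Two minor differences are worth noting. First, your map $F$ collapses the entire tube $\what{T}(R)$ to the singular point, whereas the paper uses the actual blow-down $\pi$ (resp.\ $\Phi_t^{-1}$), which is a diffeomorphism off the exceptional curves; both choices work, but the paper's choice makes the ``uniform convergence on the complement'' hypothesis cleaner since the complement is independent of the small parameter. Second, your central lemma is phrased as a convergence $\diam_{\FLY,a}(\what{T}(R))\to\diam_{g_{\co,0}}(D_0(R))$ for each fixed $R$; the paper instead proves (and only needs) the one-sided uniform bound $\diam_{\co,a}(\what{T}(\delta))\leq C\delta$ for all small $a$ (Lemma~\ref{lem-vol-diam-a-bounds}). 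Your stated convergence is stronger than required and would need a matching lower bound you do not supply, but since only the upper bound enters the $\epsilon$-isometry argument this is a cosmetic imprecision rather than a gap.
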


Before starting the proofs, we discuss how to interpret the limiting spaces $(X_0,d_{\FLY,0})$ and $(X_0,d_{H_0})$.

A \textbf{length structure} on a topological space $X$ consists of a class $A$ of curves in $X$, and a \emph{length} function $L:A \to [0,\infty)$. The curves in $A$ are called \textbf{admissible curves}. Both $A$ and $L$ have to satisfy a number of very natural conditions (see Definition~2.1.1 of \cite{BBI}). The length structure determines a distance function $d$ on $X$ so that for all $p,q \in X$, the distance $d(p,q)$ is the infimum of the lengths of all admissible curves from $p$ to $q$. A \textbf{length space} is a topological space $X$ together with a length structure on $X$.

\begin{rmk}
\label{rmk-metric-ext}

The spaces that we consider in this paper are all smooth Riemannian manifolds, with the exception of the cone $(V_0,g_{\mathrm{co},0})$ and the conifold $(X_0,g_{\mathrm{FLY},0})$ or $(X_0,H_0)$, where there are isolated singular points. Thus on each space considered, there is a natural length structure whose admissible curves are the piecewise differentiable curves, and whose length function is the standard Riemannian length -- \ie the integral of the speed of the curve with respect to the metric. The metric is defined everywhere except at the isolated singular points, so the Riemannian length is well defined.

% Given a cone $V_0$, and a cone metric $g_0$ on $(V_0)_{\mathrm{reg}}$, one can define a distance function $d_0$ on all of $V_0$ by extending $g_0$ to all of $V_0$, taking $g_0|_s = dr^2$ at the singularity $s$. This extension $g_0$ is continuous, but fails to be positive-definite. However, since the singularity is a point, this will have no effect on the lengths of curves, so one can define $d_0$ in the usual way without a fuss.
%Alternatively, one can take the metric space completion of $\left((V_0)_{\mathrm{reg}},g_0\right)$, noting that the completion is homeomorphic to $V_0$, and in this way obtain a distance function on $V_0$.

% This idea can be extended to a conifold $X_0$ equipped with a smooth metric $g_0$ on $(X_0)_{\rm reg}$ satisfying $g_0 \leq C (dr^2 + r^2 \cdot g_L)$ in a neighborhood of each isolated singularity $s_i$ (recall the notation in \eqref{eqn-cone-metric}). We first extend $g_0$ arbitrarily to all of $X_0$, \eg by setting $g_0=0$ at each singular point $s_i$. The distance function $d_0$ on $X_0$ is then defined by integration of curves, and the distance between two points in $X_0$ is finite. The result of this construction is that $X_0$ is a compact length space, whose admissible curves are exactly the piecewise differentiable curves on $X_0$. A similar construction is done in \cite{SW13, SW14}.

% In our case, we can apply this to $X_0$ endowed with either the Fu--Li--Yau metric $g_{\FLY,0}$ or the Hermitian--Yang--Mills metric $H_0$ to obtain two distance functions on $X_0$.
\end{rmk}

We will make use of the following theorem (Theorem~2.5.23 of \cite{BBI}):
\begin{thm}
\label{thm-min-curve}
    Let $X$ be a complete, locally compact length space. Then given any $p,q \in X$, there exists an admissible curve $\gamma: [0,1] \to X$ such that $\gamma(0)=p$ and $\gamma(1)=q$, with $L(\gamma)=d(p,q)$.
\end{thm}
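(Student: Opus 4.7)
The plan is to follow the classical Hopf--Rinow--Cohn--Vossen strategy for length spaces, namely: reduce the problem to Arzel\`a--Ascoli, where the necessary precompactness of the ambient region comes from the interplay of completeness and local compactness.

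The first step is to upgrade local compactness to the statement that every closed bounded set in $X$ is compact. This is the length-space version of Hopf--Rinow. The main observation is that in a length space one has approximate midpoints: for any $x,y \in X$ and any $\eps > 0$, there exists $m \in X$ with $d(x,m), d(y,m) \leq \tfrac{1}{2}d(x,y) + \eps$. Completeness turns Cauchy sequences of approximate midpoints into genuine midpoints, and local compactness is then iteratively propagated along geodesic-like chains to show $\overline{B(p,R)}$ is compact for every $R > 0$. This step is where the hypotheses are truly used, and it is the main obstacle in the argument.

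Once closed balls are compact, the remainder is routine. Let $D = d(p,q)$ and pick admissible curves $\gamma_n : [0,1] \to X$ with $\gamma_n(0) = p$, $\gamma_n(1) = q$, and $L(\gamma_n) \to D$. After reparametrizing by a constant multiple of arclength, we may assume $L(\gamma_n |_{[s,t]}) = (t-s) L(\gamma_n)$ for all $0 \le s \le t \le 1$. For $n$ large enough that $L(\gamma_n) \leq D + 1$, this yields the uniform Lipschitz bound
\begin{equation*}
    d(\gamma_n(s), \gamma_n(t)) \leq L(\gamma_n |_{[s,t]}) \leq (D+1)|t-s|,
\end{equation*}
so the family $\{\gamma_n\}$ is equicontinuous, and each image lies inside the compact set $\overline{B(p, D+1)}$. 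Arzel\`a--Ascoli now produces a subsequence converging uniformly to a Lipschitz curve $\gamma : [0,1] \to X$ with $\gamma(0) = p$ and $\gamma(1) = q$.

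Finally, one invokes lower semicontinuity of length with respect to uniform convergence on a metric space, a general property of the length functional defined as the supremum over partitions of sums $\sum d(\gamma(t_i), \gamma(t_{i+1}))$. This gives
\begin{equation*}
    L(\gamma) \leq \liminf_{n \to \infty} L(\gamma_n) = D,
\end{equation*}
while the triangle-inequality bound $L(\gamma) \geq d(\gamma(0), \gamma(1)) = D$ forces equality, so $\gamma$ is the desired minimizing admissible curve. The hardest ingredient is the Hopf--Rinow step; everything downstream is a direct application of Arzel\`a--Ascoli together with the standard lower semicontinuity of length.
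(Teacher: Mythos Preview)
The paper does not prove this statement at all: it is quoted verbatim as Theorem~2.5.23 of \cite{BBI} (Burago--Burago--Ivanov) and used as a black box. Your proposal is the standard Hopf--Rinow--Cohn--Vossen argument for length spaces, which is precisely the proof given in that reference, and the outline is correct as written.
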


Lastly, we adopt the convention that the \textbf{diameter} of a set $Q$ is understood to mean the \emph{intrinsic diameter}, as we explain in the following definition:
\begin{defn}
\label{def-diameter}
    Let $Q$ be a bounded, path connected set in a length space $X$. Given two points $p,q \in Q$, the \textbf{intrinsic distance} from $p$ to $q$ is defined as $d_{\mathrm{int}}(p,q) := \inf L(\gamma)$, where the infimum is taken over all admissible curves $\gamma$ from $p$ to $q$ contained in $Q$. The \textbf{diameter} of $Q$ is defined by
\[
\diam(Q) := \sup\limits_{p,q \in Q} d_{\mathrm{int}}(p,q).
\]
Note that this is a non-standard definition of diameter, since many authors take the diameter of $Q$ to be the supremum of the distance (in $X$) between pairs of points in $Q$.
\end{defn}

\subsection{Reduction of a Curve}
\label{subsect-curve-reduction-SR}

In order to prove our main lemma (Lemma \ref{lem-SM-bigbox}), we will first need the following \emph{curve reduction lemma}:

\begin{lem}
\label{lem-reduced-curve}
Suppose $Q_1, \ldots, Q_k$ are disjoint, closed, path-connected, bounded sets in a complete, locally compact length space $X$, and let $\gamma: [0,1] \to X$ be an admissible curve. Then there exists an admissible curve $\mu:[0,1] \to X$ such that
\begin{enumerate}[label=(\roman*)]
    \item $\mu(0)=\gamma(0)$ and $\mu(1)=\gamma(1)$,
    \item For all $i \in \{1,\ldots,k\}$, the set $\mu^{-1}(Q_i) \subset [0,1]$ is either empty or a single closed subinterval of $[0,1]$, and:
    \item We have the estimate (noting Definition~\ref{def-diameter})
    \begin{equation*}
        L(\mu) \leq L(\gamma) + \sum\limits_{i=1}^k \diam(Q_i).
    \end{equation*}
\end{enumerate}
\end{lem}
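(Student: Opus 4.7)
The plan is to construct $\mu$ by an iterative greedy procedure, handling the sets $Q_1,\ldots,Q_k$ one at a time; at stage $i$ we collapse all visits of the current curve to $Q_i$ into a single detour inside $Q_i$. Fix $\epsilon > 0$ and set $\mu^{(0)} = \gamma$. For $i = 1, \ldots, k$: if $(\mu^{(i-1)})^{-1}(Q_i) = \emptyset$, set $\mu^{(i)} = \mu^{(i-1)}$; otherwise let $a_i = \inf (\mu^{(i-1)})^{-1}(Q_i)$ and $b_i = \sup (\mu^{(i-1)})^{-1}(Q_i)$, both of which lie in $Q_i$ by closedness. Since $Q_i$ is path-connected, the intrinsic distance $d_{\mathrm{int}}(\mu^{(i-1)}(a_i),\mu^{(i-1)}(b_i))$ is at most $\diam(Q_i)$, so the infimum definition of $d_{\mathrm{int}}$ supplies an admissible curve $\sigma_i$ in $Q_i$ from $\mu^{(i-1)}(a_i)$ to $\mu^{(i-1)}(b_i)$ with $L(\sigma_i) \le \diam(Q_i) + \epsilon/k$. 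Define $\mu^{(i)}$ to agree with $\mu^{(i-1)}$ outside $[a_i,b_i]$ and to trace out $\sigma_i$ on $[a_i,b_i]$ (after the obvious reparametrization), and set $\mu := \mu^{(k)}$. Condition (i) is automatic since every replacement takes place on an interior subinterval.

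The heart of the proof is an inductive verification that $(\mu^{(i)})^{-1}(Q_j)$ is either empty or a single closed subinterval of $[0,1]$ for every $j \le i$. For $j = i$ the set equals $[a_i, b_i]$ because $\sigma_i \subset Q_i$, while outside $[a_i,b_i]$ the curve coincides with $\mu^{(i-1)}$, which does not enter $Q_i$ by the definitions of $a_i$ and $b_i$ and the closedness of $Q_i$. For $j < i$ the inductive hypothesis gives $(\mu^{(i-1)})^{-1}(Q_j) = [a'_j, b'_j]$ (or empty); since $\sigma_i \subset Q_i$ is disjoint from $Q_j$, we have
\[
(\mu^{(i)})^{-1}(Q_j) \;=\; [a'_j, b'_j] \,\cap\, \bigl([0, a_i)\cup(b_i, 1]\bigr).
\]
The crucial observation is that $\mu^{(i-1)}(a_i),\mu^{(i-1)}(b_i) \in Q_i$ cannot lie in the disjoint set $Q_j$, so $a_i, b_i \notin [a'_j, b'_j]$. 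This rules out any partial overlap: $[a'_j, b'_j]$ must lie entirely in $[0, a_i)$, entirely in $(b_i, 1]$, or entirely inside $(a_i, b_i)$, and in these three cases the intersection above equals $[a'_j, b'_j]$, $[a'_j, b'_j]$, or $\emptyset$ respectively, each satisfying the required dichotomy.

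The length bound is then a direct computation: each step obeys $L(\mu^{(i)}) \le L(\mu^{(i-1)}) + \diam(Q_i) + \epsilon/k$ because the discarded portion contributes nonnegative length, and iterating yields $L(\mu) \le L(\gamma) + \sum_{i=1}^k \diam(Q_i) + \epsilon$, matching the stated estimate up to the arbitrary slack $\epsilon$ intrinsic to the infimum definition of $d_{\mathrm{int}}$. I expect the main subtle point to be the partial-overlap dichotomy in the inductive step: it is precisely the hypothesis $Q_i \cap Q_j = \emptyset$ that forces the single-interval property to be preserved under each replacement rather than fragmenting into two components, and without this disjointness the greedy procedure would need a substantially more elaborate bookkeeping to handle interleaved visits.
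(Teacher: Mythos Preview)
Your proof is correct and takes a genuinely different route from the paper's. The paper processes the $Q_i$ in the order they are first encountered along $\gamma$: it sets $a_1 = \inf\{s : \gamma(s) \in \cup_j Q_j\}$, relabels so that $\gamma(a_1) \in Q_1$, takes $b_1 = \sup\{s : \gamma(s) \in Q_1\}$, and then looks for the next $a_2 > b_1$ at which $\gamma$ meets some remaining $Q_j$. Because $b_1$ is the last time $\gamma$ hits $Q_1$, the intervals $[a_i,b_i]$ produced this way are automatically disjoint and ordered along $[0,1]$, so property (ii) follows without any inductive bookkeeping. Your construction instead fixes the labelling $Q_1,\ldots,Q_k$ in advance and iteratively modifies the current curve, which forces the nontrivial inductive check that earlier single-interval properties survive each subsequent replacement; you handle this cleanly via the observation that $a_i,b_i \notin [a'_j,b'_j]$ by disjointness. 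The paper's version is slicker in that it sidesteps this induction; yours is more transparent about exactly where the disjointness hypothesis is used.

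Two minor points. First, your justification of (i) (``every replacement takes place on an interior subinterval'') is not quite right: $a_i$ could equal $0$ if $\gamma(0)\in Q_i$. This is harmless, since in that case $\sigma_i$ begins at $\mu^{(i-1)}(a_i)=\mu^{(i-1)}(0)$ and the endpoint is preserved anyway. Second, your final bound carries the slack $+\epsilon$, whereas the lemma as stated has none. The paper removes the slack by invoking Theorem~\ref{thm-min-curve} to obtain a genuine minimizing curve inside $Q_i$; strictly speaking this requires $Q_i$ with its intrinsic metric to itself be a complete locally compact length space, which is not verified in general, so your $\epsilon$-approximation is arguably the more careful argument. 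For the application in Lemma~\ref{lem-SM-bigbox} the extra $\epsilon$ is absorbed into the final $C\epsilon$ and is harmless.
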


\begin{proof}
    We will construct the curve $\mu$ in the following way:

    Define $a_1 \in [0,1]$ as $a_1 := \inf\left\{s \in [0,1] \mid \gamma(s) \in \cup_{i=1}^k Q_i\right\}$.
    Relabeling the sets $Q_i$ if necessary, we can say that $\gamma(a_1) \in Q_1$. Now, define a time $b_1 \in [0,1]$ by $b_1 := \sup\left\{s \in [0,1] \mid \gamma(s) \in Q_1\right\}$. Using Theorem~\ref{thm-min-curve}, take $\mu \mid_{[a_1,b_1]}$ to be any admissible curve such that $\mu\left([a_1,b_1]\right) \subset Q_1$, the endpoints satisfy $\mu(a_1)=\gamma(a_1)$ and $\mu(b_1)=\gamma(b_1)$, and furthermore
    \[
    L\left(\mu\mid_{[a_1,b_1]}\right) \leq \diam(Q_1).
    \]
     Now, for $i>1$ define $a_i$ to be $a_i := \inf\left\{s \in (b_{i-1},1] \mid \gamma(s) \in \cup_{i=1}^k Q_i\right\}$,
     and relabel the sets so that $\gamma(a_i) \in Q_i \neq Q_1, \ldots, Q_{i-1}$. Take $b_i$ to be the time $b_i := \sup \left\{s \in [0,1] \mid \gamma(s) \in Q_i\right\}$. Once again, choose $\mu \mid_{[a_i,b_i]}$ to be an admissible curve where $\mu\left([a_i,b_i]\right) \subset Q_i$, the endpoints are $\mu(a_i)=\gamma(a_i)$ and $\mu(b_i)=\gamma(b_i)$, and the length satisfies
     \[
    L\left(\mu\mid_{[a_i,b_i]}\right) \leq \diam(Q_i).
    \]
    Eventually, after $\ell \leq k$ iterations, there will not exist an $a_{\ell+1}$.

    At this point, we have constructed the curve $\mu$ on the set $A = \bigcup_{i=1}^{\ell} [a_i,b_i]$. For $s \in A':= [0,1]\, \setminus\,A$, set $\mu(s)=\gamma(s)$.

    Since the class of admissible curves is closed under restrictions and concatenations (see Definition 2.1.1 of \cite{BBI}), we see by construction that $\mu$ is admissible. Furthermore $\mu^{-1}(Q_i)=[a_i,b_i]$ for $1 \leq i \leq \ell$, and $\mu^{-1}(Q_i)=\varnothing$ otherwise. Finally, note that
    \begin{align*}
        L(\mu) &= L\left(\mu \mid_{A'} \right) + \sum\limits_{i=1}^{\ell} L\left(\mu \mid_{[a_i,b_i]}\right) \leq L\left(\gamma\mid_{A'}\right) + \sum\limits_{i=1}^{\ell} \diam(Q_i)\\
        &\leq L(\gamma) + \sum\limits_{i=1}^k \diam(Q_i),
 \end{align*}
 completing the proof.
\end{proof}

\subsection{The Main Lemma}

Gromov--Hausdorff convergence of the various metrics on both the small resolution and the smoothing will follow by applying the following general lemma. A similar strategy is used in \cite{SW13}. With this lemma in place, it will remain to verify its hypothesis in our geometric setups.

\begin{lem}
\label{lem-SM-bigbox}
    Let $X_{\alpha}$ be a family of connected compact smooth manifolds where the parameter $\alpha$ lies in either $\alpha \in (0,1]$ or $\alpha \in \Delta \backslash \{ 0 \} \subset \mathbb{C}$. Let $X_0$ be a compact analytic space with $X_0 = (X_0)_{\rm reg} \cup (X_0)_{\rm sing}$ where $(X_0)_{\rm reg}$ is a connected smooth manifold and there are finitely many ODP singular points $(X_0)_{\rm sing} = \{s_1, \ldots, s_k\}$, meaning that each $s_i \in X_0$ is contained in a neighborhood $U_i \subset X_0$ which can be identified with a neighborhood of the origin in $V_0 \subset \mathbb{C}^4$. 
    
    For each $\alpha$, let $K_{i,\alpha} \subseteq X_0$ and $C_{i,\alpha} \subseteq X_\alpha$ be disjoint compact sets with $s_i \in K_{i,\alpha}$, for $i \in \{1,\ldots,k\}$. Suppose further that we have a family of maps $F_{\alpha} \: X_{\alpha} \rarr X_0$ such that
    \begin{itemize}
        \item The restriction $F_{\alpha} \: X_{\alpha} \bs \bigcup_i C_{i,\alpha} \to X_0 \bs \bigcup_i K_{i,\alpha}$ is a diffeomorphism, and
        \item For each $i \in \{1,\ldots,k\}$, we have $F_{\alpha}(C_{i,\alpha}) \subset K_{i,\alpha}$.
    \end{itemize} 
    
  Let $g_\alpha$ be a Riemannian metric on $X_\alpha$ for each $\alpha$. Let $g_0$ be a smooth Riemannian metric on $(X_0)_{\rm reg}$ satisfying the bound $g_0 \leq C (dr^2 + r^2 \cdot g_L)$ in a neighborhood $U_i$ of the singular points $s_i$. Let $d_0$ be the distance function induced by $g_0$ on $X_0$ (see Remark \ref{rmk-metric-ext}).
    
    Now, let $\epsilon >0$, and suppose that there exist disjoint open sets $G_1, \ldots, G_k \subset X_0$ and $\alpha_0 > 0$ such that each $G_i$  satisfies
    \begin{enumerate}[label=(\roman*)]
        \item $K_{i,\alpha} \subset G_i$ for all $|\alpha|<\alpha_0$,

        \item $(F_\alpha^{-1})^* g_\alpha$ converges uniformly to $g_0$ on the compact set $X_0 \backslash \bigcup_i G_i$ as $\alpha \to 0$,
    
        \item $\diam_0 (G_i) < \epsilon$, and
        
        \item $\diam_{\alpha} (F_{\alpha}^{-1}(G_i)) < \epsilon$ whenever $|\alpha| < \alpha_0$.
    \end{enumerate}

    Then there exists $\alpha_1 > 0$ and a constant $C > 0$ independent of $\alpha$ such that 
    \begin{equation*}
        F_{\alpha} \: (X_{\alpha}, d_{\alpha}) \rarr (X_0, d_0)
    \end{equation*}
    is a $C\eps$-isometry for all $|\alpha| < \alpha_1$. 
\end{lem}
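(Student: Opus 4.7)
The plan is to verify both parts of Definition~\ref{defn-eps-isom} for $F_\alpha$ with $\eps$ replaced by a uniform multiple $C\eps$, so that $F_\alpha$ is a $C\eps$-isometry for small $|\alpha|$. The $\eps$-surjectivity is straightforward: since $F_\alpha$ restricts to a diffeomorphism onto $X_0 \setminus \bigcup_i K_{i,\alpha}$, the only points potentially not in $F_\alpha(X_\alpha)$ lie in some $K_{i,\alpha}$. As $X_0$ is connected and each $K_{i,\alpha}$ is a compact, proper subset of the open set $G_i$, one can pick $y' \in G_i \setminus K_{i,\alpha} \subset F_\alpha(X_\alpha)$, and any $y \in K_{i,\alpha}$ then lies within intrinsic distance less than $\eps$ of $y'$ by hypothesis (iii), so $X_0 \subseteq B_\eps(F_\alpha(X_\alpha))$.

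For the distance comparison, I would prove the two inequalities $|d_\alpha(x,x') - d_0(F_\alpha(x), F_\alpha(x'))| \leq C\eps$ by essentially symmetric arguments, so let me sketch the forward direction $d_0(F_\alpha(x), F_\alpha(x')) \leq d_\alpha(x,x') + C\eps$. Given $x, x' \in X_\alpha$, take a near-minimizing curve $\nu: [0,1] \to X_\alpha$ with $L_\alpha(\nu) < d_\alpha(x,x') + \eps$. Shrinking the $G_i$ slightly so that the $\overline{G_i}$ remain pairwise disjoint, I apply the Curve Reduction Lemma (Lemma~\ref{lem-reduced-curve}) to $\nu$ with the closed sets $Q_{i,\alpha} := \overline{F_\alpha^{-1}(G_i)}$, whose intrinsic diameters are at most $\eps$ by hypothesis (iv). This produces a curve $\tilde\nu$ entering each $Q_{i,\alpha}$ in at most one closed interval $[a_i, b_i]$, with $L_\alpha(\tilde\nu) \leq L_\alpha(\nu) + k\eps$. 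I then build $\gamma: [0,1] \to X_0$ from $F_\alpha(x)$ to $F_\alpha(x')$ by setting $\gamma = F_\alpha \circ \tilde\nu$ on the complement of the $[a_i, b_i]$, where $\tilde\nu$ avoids all $C_{j,\alpha}$ and so $F_\alpha$ is a diffeomorphism carrying it into the compact region $X_0 \setminus \bigcup_j G_j$, and on each $[a_i, b_i]$ I replace $\gamma$ by a minimizing shortcut in $\overline{G_i}$ joining $F_\alpha(\tilde\nu(a_i))$ and $F_\alpha(\tilde\nu(b_i))$, of length at most $\eps$ by hypothesis (iii) and Theorem~\ref{thm-min-curve}. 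For the outside pieces, hypothesis (ii) lets me choose $\alpha_1$ so that $|(F_\alpha^{-1})^* g_\alpha - g_0|_{g_0} < \eta$ on $X_0 \setminus \bigcup_j G_j$ for $|\alpha| < \alpha_1$, giving length distortion at most $(1+C\eta)$ there. Assembling all pieces and using that $\diam_{\alpha}(X_\alpha)$ is uniformly bounded in $\alpha$ (from (ii) and (iv)), this yields $L_0(\gamma) \leq L_\alpha(\nu) + C\eps$, hence the claimed inequality.

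The reverse direction $d_\alpha(x,x') \leq d_0(F_\alpha(x), F_\alpha(x')) + C\eps$ proceeds symmetrically: starting from a near-minimizing $\gamma$ in $X_0$, apply Lemma~\ref{lem-reduced-curve} with the disjoint closed sets $\overline{G_i}$, pull back by $F_\alpha^{-1}$ outside these sets (where $F_\alpha^{-1}$ is defined since $\partial G_i$ is disjoint from $K_{i,\alpha}$), and insert shortcuts in $X_\alpha$ inside each interval. A subtle point is that the entry/exit points $\tilde\gamma(\tilde a_i), \tilde\gamma(\tilde b_i)$ lie on $\partial G_i$ rather than in $G_i$, so their preimages under $F_\alpha^{-1}$ lie in $\overline{F_\alpha^{-1}(G_i)}$ rather than $F_\alpha^{-1}(G_i)$; however an Arzel\`{a}--Ascoli argument in the complete compact length space $(X_\alpha, d_\alpha)$ shows that the intrinsic diameter of the closure still equals that of the interior, so the shortcuts have length at most $\eps$ as well. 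Combining both inequalities with the $\eps$-surjectivity then establishes the $C\eps$-isometry property. The main obstacle, in my view, is the careful bookkeeping at the boundaries of the degeneration regions --- arranging that all curve-reduction sets are closed, path-connected, and pairwise disjoint, verifying that the $F_\alpha^{-1}$-preimages of near-boundary points remain in a set of intrinsic diameter $O(\eps)$, and systematically ensuring that the $F_\alpha \circ \tilde\nu$ pieces lie in the region $X_0 \setminus \bigcup_i G_i$ where the uniform metric convergence (ii) can be invoked.
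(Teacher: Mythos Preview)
Your proposal is correct and follows essentially the same approach as the paper's proof: verify $\eps$-density of $F_\alpha(X_\alpha)$ using the diameter bound (iii), then prove the two-sided distance comparison by applying the curve-reduction Lemma~\ref{lem-reduced-curve} to a (near-)minimizing curve, transferring the ``good'' pieces across via $F_\alpha^{\pm 1}$ using the uniform convergence (ii), and bounding the ``bad'' pieces using the diameter hypotheses (iii)--(iv). The paper swaps the order in which the two inequalities are proved and uses exact minimizers via Theorem~\ref{thm-min-curve} rather than near-minimizers, and it applies the triangle inequality directly across the bad intervals rather than explicitly inserting shortcut curves, but these are cosmetic differences; your extra care about disjoint closures and boundary points (which the paper handles implicitly by working with $\overline{G_i}$ throughout) is fine, though invoking Arzel\`a--Ascoli for the closure-diameter equality is not the right tool---a simple limiting argument suffices.
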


\begin{proof}
    Let $\eps > 0$. We first prove that the image of each $F_{\alpha}$ is $\eps$-dense in $X_0$. By our assumptions, the only points in $X_0$ not in $F_{\alpha}(X_{\alpha})$ must lie in some $K_{i,{\alpha}}$. For each $i$, we can choose some $p \in \overline{G_i} \bs K_{i,\alpha}$ which is in the image of $F_{\alpha}$.  Since $\diam_{g_0} (\overline{G_i}) < \epsilon$, we have that $F_{\alpha}(X_{\alpha})$ is $\eps$-dense in $X_0$ with respect to $d_0$ for sufficiently small $\alpha$.

    It remains to prove that there exists some $C,\alpha_1>0$ such that for all $|\alpha|<\alpha_1$ then
    \begin{equation} \label{mainlemineq}
        |d_{\alpha}(p,q) - d_0(F_{\alpha}(p),F_{\alpha}(q))| < C\eps
    \end{equation}
    for each $p,q \in X_{\alpha}$. 

    Let $p,q \in X_{\alpha}$. Using Theorem~\ref{thm-min-curve}, pick a curve $\gamma \: [0,1] \rarr X_0$ such that $\gamma(0) = F_{\alpha}(p)$ and $\gamma(1) = F_{\alpha}(q)$ and
    \begin{equation} \label{step1}
        L_0(\gamma) = d_0(F_{\alpha}(p),F_{\alpha}(q)).
    \end{equation}
We will replace this curve $\gamma$ with a curve $\mu$ on $X_0$ passing through the bad sets $\overline{G_i}$ at most $k$ times using Lemma \ref{lem-reduced-curve}. The new curve $\mu$ is piecewise differentiable with $\mu(0) = F_{\alpha}(p)$, $\mu(1) = F_{\alpha}(q)$,
    \begin{equation} \label{step2}
        L_0(\mu) \leq L_0(\gamma) + \sum_{i=1}^k \diam_0(\overline{G_i}) \leq L_0(\gamma) + k \eps,
    \end{equation}
and the construction of Lemma \ref{lem-reduced-curve} provides an integer $\ell \leq k$ and a sequence
    \begin{equation*}
        0 \leq a_1 \leq b_1 < \ldots < a_\ell \leq b_\ell \leq 1,
    \end{equation*}
    such that (by relabelling $s_i$ if necessary) we have $\mu^{-1}(\overline{G_i}) = [a_i,b_i]$ for $1 \leq i \leq \ell$ and $\mu^{-1}(\overline{G_i}) = \emptyset$ for $\ell + 1 \leq i \leq k$. Set $A_i = [a_i,b_i]$ and $A' = [0,1]\, \bs\, \bigcup_{i=1}^\ell A_i$.

   Over the closed time intervals $\overline{A'}$, the curve $\mu$ does not enter any $K_{i,\alpha}$, and can be identified with a curve on $X_\alpha$ by the diffeomorphism $F_\alpha$. Define a curve $\mu_{\alpha}: \overline{A'} \rightarrow X_\alpha$ on $X_{\alpha}$ by $\mu_{\alpha}(s) = F_{\alpha}^{-1} \circ \mu(s)$.
   
    %\begin{itemize}
    %    \item on $A'$, let $\mu_{\alpha}(s) = F_{\alpha}^{-1} \circ \mu(s)$,

     %   \item on each $A_i$, let $\mu_{\alpha}$ be an arbitrary curve in $F_{\alpha}^{-1}(\overline{G_i})$ such that $\mu_{\alpha}(a_i) = F_{\alpha}^{-1} \circ \mu(a_i)$ and $\mu_{\alpha}(b_i) = F_{\alpha}^{-1} \circ \mu(b_i)$. (If $a_1 = 0$, then set $\mu_{\alpha}(0) = p$ and if $b_\ell = 1$, set $\mu_{\alpha}(1) = q$.)
    %\end{itemize}
    
    By the triangle inequality and the diameter estimate $\diam_{\alpha} (F_{\alpha}^{-1}(\overline{G_i})) < \eps$, we have that
    \begin{equation} 
    \begin{aligned}
        d_{\alpha}(p,q) &\leq d(p,\mu_{\alpha}(a_1))+\sum\limits_{i=1}^{\ell} d_{\alpha}(\mu_{\alpha}(a_i),\mu_{\alpha}(b_i)) + \sum\limits_{i=2}^{\ell} d_{\alpha}(\mu_{\alpha}(b_{i-1}),\mu_{\alpha}(a_i)) + d_{\alpha}(\mu_{\alpha}(b_{\ell},q)\\
        &\leq L_{\alpha}({\mu}_{\alpha}|_{[0,a_1]}) + \sum\limits_{i=2}^{\ell} L_{\alpha}(\mu_{\alpha} \mid_{[b_{i-1},a_i]}) + L_{\alpha}(\mu_{\alpha} \mid_{[b_{\ell},1]})+k\eps \\
        &\leq \int_{A'} |\dot{\mu}_{\alpha}(s)|_{g_{\alpha}}\, ds + k \eps = \int_{A'} |(F_{\alpha}^{-1})_* \dot{\mu}(s) |_{g_{\alpha}}\, ds + k \eps. \label{dpq1}
    \end{aligned}
    \end{equation}
    The set $A'$ is defined such that $\mu|_{A'} \in X_0 \bs \bigcup_i G_i$. The uniform convergence of the metrics $(F_{\alpha}^{-1})^*g_{\alpha}$ to $g_0$ on this region gives that
    \begin{equation*} 
        \int_{A'} |(F_{\alpha}^{-1})_* \dot{\mu}(s) |_{g_{\alpha}}\, ds \leq (1+ \delta) \int_{A'} |\dot{\mu}(s) |_{g_0}\, ds \leq (1+\delta) L_0(\mu),
    \end{equation*}
    and $\delta$ can be made arbitrarily small for sufficiently small $\alpha$. We can next apply \eqref{step1}, \eqref{step2} to obtain 
      \begin{equation} \label{dpq2}
        \int_{A'} |(F_{\alpha}^{-1})_* \dot{\mu}(s) |_{g_{\alpha}}\, ds \leq L_0(\mu) + \delta \, (\diam_{0}(X_0)+k \epsilon).
    \end{equation}
     Note that $\diam_{0}(X_0)< \infty$ since it is a union of a smooth geometry on a compact manifold $X_0 \bs \bigcup_i G_i$ with sets $\overline{G_i}$ of bounded diameter that have non-trivial intersection with $X_0 \bs \bigcup_i G_i$. Combining \eqref{dpq1} and \eqref{dpq2} and choosing $\delta$ small enough gives
    \begin{equation*}
        d_\alpha(p,q) \leq L_0(\mu) + (k+1) \epsilon.
    \end{equation*}
    Applying \eqref{step1} and \eqref{step2}, we then have
    \begin{equation} \label{mainlemineq2}
        d_{\alpha}(p,q) \leq d_0(F(p),F(q)) + (2k+1) \eps.
    \end{equation}
    We now need to obtain the other side of the desired inequality \eqref{mainlemineq}, and the argument is similar. Let $\eta_{\alpha} \: [0,1] \rarr X_{\alpha}$ be a curve such that $\eta_{\alpha}(0) = p$ and $\eta_{\alpha}(1) = q$, and
    \begin{equation*}
        L_{\alpha}(\eta_{\alpha}) = d_{\alpha}(p,q).
    \end{equation*}
   As before, we use Lemma \ref{lem-reduced-curve} to replace $\eta_\alpha$ with a curve $\nu_\alpha$ passing through the bad sets $F_\alpha^{-1}(\overline{G_i})$ at most $k$-times. The replacement curve $\eta_\alpha:[0,1] \rightarrow X_\alpha$ has the same endpoints $\nu_{\alpha}$ with $\nu_{\alpha}(0) = p$, $\nu_{\alpha}(1) = q$ and satisfies the length estimate
    \begin{equation} \label{FpFq3}
        L_{\alpha}(\nu_{\alpha}) \leq L_{\alpha}(\eta_{\alpha}) + \sum_{i=1}^k \diam_{\alpha}(F_{\alpha}^{-1}(\overline{G_i})) \leq L_{\alpha}(\eta_{\alpha}) + k \eps.
    \end{equation}
The time interval can be broken into $[0,1] = A_{\alpha} \cup A'_{\alpha}$ as before where $\nu_\alpha|_{A'_\alpha} \in X_\alpha \backslash \bigcup F_\alpha^{-1}(G_i)$.
    
    %Again, we use Lemma \ref{lem-reduced-curve} to replace $\eta_\alpha$ with a curve $\nu_{\alpha}$ with $\nu_{\alpha}(0) = p$, $\nu_{\alpha}(1) = q$, 
    %\begin{equation*}
    %    L_{\alpha}(\nu_{\alpha}) \leq L_{\alpha}(\eta_{\alpha}) + \sum_{i=1}^k \diam_{\alpha}(F_{\alpha}^{-1}(\overline{G_i})) \leq L_{\alpha}(\eta_{\alpha}) + k \eps,
    %\end{equation*}
    %together with a sequence
    %\begin{equation*}
%        0 \leq a_{1,\alpha} \leq b_{1,\alpha} < \ldots < a_{\ell_{\alpha},\alpha} \leq b_{\ell_{\alpha},\alpha} \leq 1, \quad \ell_{\alpha} \leq k,
%    \end{equation*}
%    such that $\nu_{\alpha}^{-1}(F_{\alpha}^{-1}(\overline{G_i})) = [a_{i,\alpha}, b_{i,\alpha}]$ for $1 \leq i \leq \ell_{\alpha}$ and $\nu_{\alpha}^{-1}(F_{\alpha}^{-1}(\overline{G_i})) = \emptyset$ for $\ell_{\alpha} + 1 \leq i \leq k$. As before, set $A_{i,\alpha} = [a_{i,\alpha},b_{i,\alpha}]$ and $A_{\alpha}' = [0,1]\, \bs\, \bigcup_{i=1}^{\ell_\alpha} A_{i,\alpha}$.

   We now move onto the space $(X_0,d_0)$. Define a curve $\nu: \overline{A'_\alpha} \rightarrow X_0$ by $\nu(s) = F_{\alpha} \circ \nu_{\alpha}(s)$, and apply the triangle inequality as in \eqref{dpq1} to obtain
    \begin{equation} \label{FpFq1}
        d_0(F(p),F(q)) \leq \int_{A'_\alpha} |\dot{\nu}|_{g_0} ds + k \epsilon = \int_{A'_\alpha} |\dot{\nu}_\alpha |_{F_\alpha^* g_0} ds + k \epsilon.
    \end{equation}
   
    %\begin{itemize}
    %    \item on $A_{\alpha}'$, let $\nu(s) = F_{\alpha} \circ \nu_{\alpha}(s)$,

     %   \item on each $A_{i,\alpha}$, let $\nu$ be an arbitrary curve in $\overline{G_i}$ such that $\nu(a_{i,\alpha}) = F_{\alpha} \circ \nu_{\alpha}(a_{i,\alpha})$ and $\nu(b_{i,\alpha}) = F_{\alpha} \circ \nu_{\alpha}(b_{i,\alpha})$ (If $a_1 = 0$, then set $\nu(0) = F_{\alpha}(p)$ and if $b_{\ell_{\alpha}} = 1$, set $\nu(1) = F_{\alpha}(q)$.)
    %\end{itemize}
   % By the triangle inequality, we have
   % \begin{equation*}
   % \begin{aligned}
%        d_0(F(p),F(q)) &\leq d(F(p),\nu(a_{1,\alpha}))+\sum\limits_{i=1}^{\ell_{\alpha}} d_0(\nu(a_{i,\alpha}),\nu(b_{i,\alpha})) + \sum\limits_{i=2}^{\ell_{\alpha}} d_0(\nu(b_{i-1,\alpha}),\nu(a_{i,\alpha})) + d_0(\nu(b_{\ell,\alpha},F(q))\\
 %       &\leq L_0({\nu}|_{[0,a_{1,\alpha}]}) + \sum\limits_{i=2}^{\ell_{\alpha}} L_0(\nu \mid_{[b_{i-1,\alpha},a_{i,\alpha}]}) + L_0(\nu \mid_{[b_{\ell,\alpha},1]})+k\eps \\
  %      &\leq \int_{A'} |\dot{\nu}(s)|_{g_0}\, ds + k \eps = \int_{A_{\alpha}'} |(F_{\alpha})_* \dot{\nu_{\alpha}}(s)|_{g_0}\,ds + k \eps.
   % \end{aligned}
   % \end{equation*}
The convergence of the metrics $(F_{\alpha}^{-1})^* g_{\alpha}$ to $g_0$ on $X_0 \backslash \bigcup_i G_i$ and the fact that $\nu_\alpha|_{A'_\alpha}$ stays within $X_\alpha \backslash \bigcup F_\alpha^{-1}(G_i)$ implies that
    \begin{equation} \label{FpFq2}
        \int_{A'_\alpha} | \dot{\nu}_{\alpha}(s) |_{F_\alpha^* g_0}\, ds \leq  \int_{A'_\alpha} (1+|F_\alpha^* g_0 - g_\alpha|_{g_\alpha} )|\dot{\nu}_\alpha|_{g_\alpha} ds \leq L_\alpha(\nu_\alpha) + \delta \, (\diam_{\alpha}(X_\alpha)+1),
    \end{equation}
   where $\delta$ is small for sufficiently small $\alpha$. We can bound uniformly in $\alpha$ the diameter
   \begin{equation*}
       \diam_{\alpha}(X_\alpha) \leq C.
   \end{equation*}
   For this, note that $(X_\alpha \backslash \bigcup_i F_\alpha^{-1}(G_i), g_\alpha)$ is isometric to $(X_0 \backslash \bigcup_i G_i, (F_\alpha^{-1})^*g_\alpha)$ which has bounded diameter since $(F_\alpha^{-1})^* g_\alpha \rightarrow g_0$ smoothly uniformly on this region. The remaining piece of the geometry $(X_\alpha,g_\alpha)$, namely the sets $\bigcup_i F_\alpha^{-1}(\overline{G_i})$, also have bounded diameter.

  From here, we can combine \eqref{FpFq1}, \eqref{FpFq2} and \eqref{FpFq3} and choose $\delta$ small enough to establish
\begin{equation*}
    d_0(F(p),F(q)) \leq d_\alpha(p,q) + (2k+1) \epsilon.
\end{equation*}
    Combining this together with \eqref{mainlemineq2}, we obtain \eqref{mainlemineq} and the lemma holds for the uniform constant $C = 2k+1$.
\end{proof}

\subsection{Estimates on the Small Resolution}
\label{subsect-ests-SR}

In this subsection, we will show how Lemma~\ref{lem-SM-bigbox} gives convergence of the families of metrics on the small resolution. In the small resolution case, the maps $F_\alpha$ are simply the blowdown map $F: \what{X} \to X_0$, while the sets $C_{i,\alpha} \subset \what{X}$ are the $(-1,-1)$-curves $E_i \simeq \mathbb{P}^1$, and the sets $K_{i,\alpha} \subset X_0$ are the singletons $K_{i,\alpha} = \{s_i\}$ containing the conifold singularities. At this point, we must check that the diameter estimates (ii) and (iii) appearing in Lemma~\ref{lem-SM-bigbox} apply for the small resolution metrics. Since these are local estimates around the $(-1,-1)$-curves and around the singularities, we work on the local model $(\what{V},\what{g}_{\co,a})$. In order to get a handle on bounds pertaining to the ``tube" $\what{T}(1) = \{ r \leq 1 \}$, we split it up into a smaller ``tube" $\what{T}(aK)$ and an ``annulus" $\what{T}(1)\, \bs\, \what{T}(aK)$.

\subsubsection{Tubular Bounds}
\label{subsubsect-tubular-SR}

Recall the Asymptotically Conical Decay Property \ref{prop-SR-II}. We may fix a constant $K$ such that
\begin{equation*}
    |(\pi^{-1})^* (\what{g}_{\co,a}) - g_{\co,0}|_{g_{\co,0}} \leq \frac{1}{2} 
\end{equation*}
when $r > aK$. We start with uniform bounds on the spaces $(\what{T}(aK), \what{g}_{\co,a})$. These will be obtained using the Scaling Property \ref{prop-SR-I} and the compactness of the set $(\what{T}(K), \what{g}_{\co,1})$.

To estimate the diameter, we consider a curve $\gamma \: [0,1] \rarr \what{T}(aK)$. The length of this curve with respect to the metric $\what{g}_{\co,a}$ is given by
\begin{equation*}
\begin{aligned}
    \what{L}_{\co,a} (\gamma) &= \int_0^1 \sqrt{\what{g}_{\co,a} \Big( \dot{\gamma}(s), \dot{\gamma}(s) \Big)} \, ds \\
    &= \int_0^1 \sqrt{ a^2 \cdot S^*_{a^{-1}} (\what{g}_1) \Big( \dot{\gamma}(s), \dot{\gamma}(s) \Big)} \, ds \\
    &= a \cdot \int_0^1 \sqrt{ \what{g}_{\co,1} \Big( (S_{a^{-1}})_* \dot{\gamma}(s), (S_{a^{-1}})_* \dot{\gamma}(s) \Big)} \, ds \\
    &= a \cdot \what{L}_{\co,1} ( S_{a^{-1}} \circ \gamma ),
\end{aligned}
\end{equation*}
where we have used \ref{prop-SR-I}. Since there is a one-to-one correspondence between curves in $\what{T}(aK)$ and curves in $\what{T}(K)$ given by composition with $S_{a^{-1}}$, it follows that
\begin{equation}
\label{eqn-tubular-diam-a-bound}
    \what{\diam}_{\co,a} (\what{T}(aK)) = a \cdot \what{\diam}_{\co,1} (\what{T}(K)).
\end{equation}

To obtain a volume bound, we note that
\begin{equation*}
    \what{\Vol}_{\co,a} (\what{T}(aK)) = \int_{\what{T}(aK)} \what{\omega}_{\co,a}^3 = \int_{\what{T}(aK)} a^6 \cdot S^*_{a^{-1}} (\what{\omega}_{\co,1}^3).
\end{equation*}
Using the change of variables formula, this becomes
\begin{equation*}
    \what{\Vol}_{\co,a} (\what{T}(aK)) = a^6 \cdot \int_{S_{a^{-1}}(\what{T}(aK))} \what{\omega}_{\co,1}^3 = a^6 \cdot \int_{\what{T}(K)} \what{\omega}_{\co,1}^3 .
\end{equation*}
Therefore
\begin{equation}
\label{eqn-tubular-Vol_{co,a}-bound}
    \what{\Vol}_{\co,a} (\what{T}(aK)) = a^6 \cdot \what{\Vol}_{\co,1} (\what{T}(K)).
\end{equation}

\subsubsection{Annular Bounds}
\label{subsubsect-annular-SR}

Let $\delta > 0$. We will obtain diameter and volume bounds on the annular region $\what{T}(\delta) \bs \what{T}(aK)$ for $0 < a \leq \frac{\delta}{K}$. These are derived using the Asymptotically Conical Decay Property \ref{prop-SR-II}.

Fix a point $p = (\lambda, u_0, v_0) \in \what{T}(\delta) \bs \what{T}(aK)$, and denote $\rho = r(p)$. Then $\rho \in (aK,\delta]$. 

Consider the curve $\what{\gamma}: [\frac{a}{\rho}K, 1] \rightarrow \what{T}(\delta)$ given by
\begin{equation}
    \what{\gamma}(s) = (\lambda_0, s^{\frac{3}{2}} u_0, s^{\frac{3}{2}} v_0).
\end{equation}
This path begins in $\what{T}(aK)$ and moves along the fiber over $\lambda_0$ to arrive at $p=\what{\gamma}(1)$. 

Using the blowdown map $\pi: \what{V} \rightarrow V_0$ \eqref{pi-defn}, it can be directly checked that this curve is sent to the  curve $\gamma = \pi \circ \what{\gamma}$ in $V_0$ given by:
\begin{equation*}
    \gamma(s) = s^{\frac{3}{2}} \cdot \pi(\lambda_0, u_0, v_0).
\end{equation*}
It follows that
\begin{equation*}
    r (\gamma (s)) = s \cdot \rho.
\end{equation*}

\begin{lem} \label{lem-cone-ball}
The path $\gamma(s)$ on $V_0$ given above has speed $|\dot{\gamma}|_{g_{\co,0}} = \rho$, and length $L_{\co,0}(\gamma)=\rho-aK$.
\end{lem}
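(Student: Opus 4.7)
The proof amounts to identifying $\gamma$ as a radial curve on the cone $V_0$ and invoking the cone decomposition of $g_{\co,0}$.

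First I would observe that the curve $\gamma(s) = s^{3/2} z_0$, where $z_0 = \pi(\lambda_0, u_0, v_0)$, is a radial ray in $\bb{C}^4$: it is just the straight line through the origin and $z_0$, reparametrized by $s^{3/2}$. Since $V_0 = \{\sum_i z_i^2 = 0\}$ is invariant under scalar multiplication, $\gamma$ indeed lies in $V_0$. Because $\|\gamma(s)\| = s^{3/2}\|z_0\|$, the radius function is $r(\gamma(s)) = \|\gamma(s)\|^{2/3} = s \cdot \|z_0\|^{2/3} = s\rho$.

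Next I would use the cone decomposition \eqref{eqn-cone-metric}, namely $g_{\co,0} = dr \otimes dr + r^2 \cdot g_L$, and check that $\gamma$ moves only in the radial direction. Writing $\gamma(s) = (s\rho)^{3/2}\cdot(z_0/\rho^{3/2})$ and noting that $\|z_0/\rho^{3/2}\| = \|z_0\|/\rho^{3/2} = 1$, the point $z_0/\rho^{3/2}$ lies on the link $L = \{z \in V_0 : r(z) = 1\}$ and is independent of $s$. Thus $\gamma$ has constant projection to the link, so its velocity has no $g_L$-component. Consequently
\begin{equation*}
|\dot\gamma(s)|_{g_{\co,0}}^2 = \Big(\tfrac{d}{ds} r(\gamma(s))\Big)^2 + r(\gamma(s))^2 \cdot |\dot L|_{g_L}^2 = \rho^2 + 0 = \rho^2,
\end{equation*}
so $|\dot\gamma|_{g_{\co,0}} = \rho$ as claimed.

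Finally, integrating the constant speed over the parameter interval gives
\begin{equation*}
L_{\co,0}(\gamma) = \int_{aK/\rho}^1 \rho\,ds = \rho\left(1 - \tfrac{aK}{\rho}\right) = \rho - aK,
\end{equation*}
which completes the proof. There is no real obstacle here; the only point requiring any care is the verification that $\gamma$ has no transverse motion in the link, and this is immediate from the fact that $\gamma$ is a pure scaling of the fixed point $z_0$.
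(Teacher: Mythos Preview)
Your proof is correct and follows essentially the same approach as the paper: both arguments use the cone decomposition $g_{\co,0} = dr^2 + r^2 g_L$, observe that the projection of $\gamma$ to the link is constant (since $\gamma$ is a pure scaling of a fixed point), deduce $|\dot\gamma|_{g_{\co,0}} = \tfrac{d}{ds}r(\gamma(s)) = \rho$, and integrate over $[aK/\rho,1]$.
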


\begin{proof}
The cone metric can be written as $g_{\co,0} = dr^2 + r^2 \cdot {\rm pr}_1^* g_L$, where $g_L$ is a metric on the link $L = \{ r = 1\}$ and ${\rm pr}_1: V_0 \rightarrow L$ is the projection to the link ${\rm pr}_1(z) = \frac{z}{\| z \|}$. We can then compute
\begin{equation*}
dr (\dot{\gamma}) = \frac{d}{ds} (r \circ \gamma) = \rho,
\end{equation*}
and
\begin{equation*}
    ({\rm pr}_1)_* \dot{\gamma} = \frac{d}{ds} ({\rm pr}_1 \circ \gamma) = 0.
\end{equation*}
Therefore
\begin{equation*}
    g_{\co,0}(\dot{\gamma},\dot{\gamma}) = \rho^2.
\end{equation*}
This gives the speed of $\gamma$, and integration gives the length.
\end{proof}

We now compare the length of the curve $(\what{\gamma}, \what{g}_{\co,a})$ to the length of the curve $(\gamma, g_{\co,0})$.
\begin{equation*}
\begin{aligned}
    |\what{L}_{\co,a}(\what{\gamma}) - L_{\co,0} (\gamma)|
    &= \Bigg| \int_{\frac{a}{\rho}K}^1 |\dot{\what{\gamma}}|_{\what{g}_{\co,a}} - |\dot{\gamma}|_{g_{\co,0}} \, ds \Bigg| \\
    &\leq \int_{\frac{a}{\rho}K}^1 \bigg| |\dot{\what{\gamma}}|_{\what{g}_{\co,a}} - |\dot{\gamma}|_{g_{\co,0}} \bigg| \, ds \\
    &= \int_{\frac{a}{\rho}K}^1 \bigg| |\dot{\gamma}|_{(\pi^{-1})* (\what{g}_{\co,a})} - |\dot{\gamma}|_{g_{\co,0}} \bigg| \, ds \\
    &= \int_{\frac{a}{\rho}K}^1 \Bigg| \frac{|\dot{\gamma}|^2_{(\pi^{-1})* (\what{g}_{\co,a})} - |\dot{\gamma}|^2_{g_{\co,0}}}{|\dot{\gamma}|_{(\pi^{-1})* (\what{g}_{\co,a})} + |\dot{\gamma}|_{g_{\co,0}}} \Bigg| \, ds.
\end{aligned}
\end{equation*}
We then obtain the estimate
\begin{equation*}
 \begin{aligned}
|\what{L}_{\co,a}(\what{\gamma}) - L_{\co,0} (\gamma)| &\leq \int_{\frac{a}{\rho}K}^1 \frac{| (\pi^{-1})^* \what{g}_{\co,a} - g_{\co,0}|_{g_{\co,0}} \, |\dot{\gamma}|_{g_{\co,0}}^2}{|\dot{\gamma}|_{(\pi^{-1})* (\what{g}_{\co,a})} + |\dot{\gamma}|_{g_{\co,0}}} \, ds \\
&\leq \int_{\frac{a}{\rho}K}^1 | (\pi^{-1})^* \what{g}_{\co,a} - g_{\co,0}|_{g_{\co,0}} \, |\dot{\gamma}|_{g_{\co,0}} \, ds.
 \end{aligned}   
\end{equation*}
We now use $|\dot{\gamma}|_{g_{\co,0}}=\rho$, $r(\gamma(s)) = s \cdot \rho$, and \ref{prop-SR-II} to obtain
\begin{equation*}
|\what{L}_{\co,a}(\what{\gamma}) - L_{\co,0} (\gamma)| 
\leq \int_{\frac{a}{\rho}K}^1 \frac{C a^2 \rho}{r^2} \, ds = \frac{Ca^2}{\rho} \cdot  \int_{\frac{a}{\rho}K}^1 \frac{1}{s^2} \, ds = Ca \cdot \Big( \frac{1}{K} - \frac{a}{\rho} \Big).
\end{equation*}
Therefore
\begin{equation*}
    \what{L}_{\co,a}(\what{\gamma}) \leq |\what{L}_{\co,a}(\what{\gamma}) - L_{\co,0} (\gamma)| + |L_{\co,0}(\gamma)| \leq Ca \cdot \Big( \frac{1}{K} - \frac{a}{\rho} \Big) + (\rho - aK).
\end{equation*}
For fixed $0 < a \leq \frac{\delta}{K}$, this is maximized when $\rho = \delta$, giving
\begin{equation*}
    \what{L}_{\co,a}(\what{\gamma}) \leq  C \cdot \left(\frac{a}{K\delta} + 1\right) \cdot (\delta - aK) \leq C \cdot (\delta - aK)
\end{equation*}
As such, we get
\begin{equation*}
    \what{d}_{\co,a} (p, \what{T}(aK)) \leq  C \cdot (\delta - aK).
\end{equation*}

In tandem with our diameter bound for $\what{T}(aK)$ \eqref{eqn-tubular-diam-a-bound}, we get
\begin{equation*}
    \what{\diam}_{\co,a} (\what{T}(\delta)) \leq a \cdot \what{\diam}_{\co,1} (\what{T}(K)) + 2C \cdot  (\delta - aK),
\end{equation*}
which for fixed $\delta$ and $K$, is uniformly bounded for $0 < a \leq \frac{\delta}{K}$.

In the appendix, we will need a volume estimate on tubes around the exceptional divisor. This volume estimate follows from the diameter estimate and the Bishop--Gromov Comparison Theorem. Indeed, for $0 < a \leq \frac{\delta}{K}$, the diameter estimate tells us that
\[
\what{\diam}_{\co,a} (\what{T}(\delta)) \leq C \delta.
\]
Thus the tube $\what{T}(\delta)$ is contained in the ball $\what{B}(p,C\delta)$ for any point $p \in \what{T}(\delta)$. Therefore
\begin{align*}
    \what{\Vol}_{\co,a}(\what{T}(\delta)) &\leq \what{\Vol} \, \what{B}(p,C\delta) \\
    &\leq \Vol_{\mathrm{Euc}} \,B(0,C\delta),
\end{align*}
where $\Vol_{\mathrm{Euc}} \, B(0,C\delta)$ denotes the Euclidean volume of the ball of radius $C\delta$ in $\mathbb{R}^6$. This second inequality is by Bishop--Gromov, together with the fact that the metrics $\what{g}_{\co,a}$ are Ricci-flat. Thus, $ \what{\Vol}_{\co,a}(\what{T}(\delta)) \leq C \delta^6$.
We record these diameter and volume bounds for future reference. The diameter bound is used in the next section, and the volume bound is used in the appendix.
\begin{lem}
\label{lem-vol-diam-a-bounds}
    For $\delta > 0$, we have
    \begin{equation}
    \label{eqn-diam-a-bound}
        \what{\diam}_{\co,a} (\what{T}(\delta)) \leq C \delta,
    \end{equation}
    and
        \begin{equation}
    \label{eqn-vol-a-bound}
        \what{{\rm Vol}}_{\co,a}(\hat{T}(\delta)) \leq C \delta^6.
 \end{equation}
 %    \begin{equation}
 %    \label{eqn-vol-a-bound}
 %        \what{{\rm Vol}}_{\co,a}(\hat{T}(\delta)) \leq a^6 \cdot \what{\Vol}_{\co,1}(\what{T}(K)) +  C \cdot \Big[ (\delta^6 - a^6K^6) + a^2 \cdot (\delta^4 - a^4K^4) \Big].
 % \end{equation}
    for any $0 < a \leq \frac{\delta}{K}$.
\end{lem}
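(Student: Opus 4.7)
The plan is to decompose the tube $\what{T}(\delta)$ into the inner tube $\what{T}(aK)$ and the annular region $\what{T}(\delta) \setminus \what{T}(aK)$, then estimate each piece by a different mechanism: the Normalization Property \ref{prop-SR-I} for the inner tube where the metric is strongly rescaled, and the Asymptotically Conical Decay Property \ref{prop-SR-II} for the outer annulus where the metric is close to the cone.

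For the diameter bound, the scaling map $S_{a^{-1}}$ carries $(\what{T}(aK), \what{g}_{\co,a})$ isometrically (up to the factor $a^2$ in the metric) to $(\what{T}(K), \what{g}_{\co,1})$, so lengths scale by $a$ and I obtain $\what{\diam}_{\co,a}(\what{T}(aK)) = a \cdot \what{\diam}_{\co,1}(\what{T}(K))$. For a point $p$ in the annulus with $r(p) = \rho \in (aK,\delta]$, I would construct an explicit radial curve from $p$ to the boundary $\partial \what{T}(aK)$ by rescaling fiber coordinates, push it forward to the cone under $\pi$, and compare lengths using $|(\pi^{-1})^* \what{g}_{\co,a} - g_{\co,0}|_{g_{\co,0}} \leq Ca^2 r^{-2}$. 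The cone length of the rescaled curve is easily computed (speed $\rho$, length $\rho - aK$), and the metric comparison contributes an error of size $\int_{aK/\rho}^1 (Ca^2/\rho) s^{-2}\, ds = O(a)$. Maximizing over $\rho \in (aK, \delta]$ gives $\what{d}_{\co,a}(p, \what{T}(aK)) \leq C(\delta - aK)$, and the triangle inequality then combines the two pieces.

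For the volume bound, I would integrate the volume form $\what{\omega}_{\co,a}^3$ over each region. On the inner tube, pulling back by $S_{a^{-1}}$ and using $S_{a^{-1}}^* \what{\omega}_{\co,a}^3 = a^{-6} \what{\omega}_{\co,1}^3$ under Property \ref{prop-SR-I} gives the $a^6$ factor. On the annulus, write $(\pi^{-1})^* d{\rm vol}_{g_{\co,a}} - d{\rm vol}_{g_{\co,0}} = \tfrac{1}{2} \int_0^1 \Tr_{g_s}(g_{\co,0} - (\pi^{-1})^* g_{\co,a}) \, d{\rm vol}_{g_s}$ along the linear path $g_s = s (\pi^{-1})^* g_{\co,a} + (1-s) g_{\co,0}$; the choice of $K$ guarantees uniform equivalence $C^{-1} g_{\co,0} \leq g_s \leq C g_{\co,0}$ in this region, and the decay estimate reduces the integrand to $O(a^2 r^{-2})$ times the cone volume form. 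The resulting radial integral $\int_{aK}^\delta (r^5 + a^2 r^3)\, dr$ yields the two polynomial terms in the bound.

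The main technical point is ensuring all constants $C$ are uniform in $a$, and this is handled automatically by systematically transporting estimates to the fixed model $(\what{V}, \what{g}_{\co,1})$ via $S_{a^{-1}}$ and applying the $a=1$ asymptotically conical decay there. No compactness issues arise because the annular region $\{aK < r \leq \delta\}$ has bounded cone volume uniformly in $a$, so each integral estimate above is finite and controlled.
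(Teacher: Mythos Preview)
Your proposal is correct and follows essentially the same approach as the paper: the decomposition into the inner tube $\what{T}(aK)$ handled by the scaling Property~\ref{prop-SR-I} and the annulus handled by the Asymptotically Conical Decay Property~\ref{prop-SR-II}, the explicit fiber-rescaling radial curve with cone speed $\rho$ and length $\rho - aK$, the length comparison yielding the $O(a)$ error, and the linear interpolation $g_s$ of volume forms on the annulus producing the $\int_{aK}^{\delta}(r^5 + a^2 r^3)\,dr$ integral are all exactly as in the paper's argument. The only cosmetic difference is that the paper records the intermediate statement $\what{\diam}_{\co,a}(\what{T}(aK)) = a\cdot\what{\diam}_{\co,1}(\what{T}(K))$ and Lemma~\ref{lem-cone-ball} separately before assembling them.
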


\subsubsection{Applying the Main Lemma}

Our diameter estimates will enable us to prove the following useful lemma akin to that of Song--Weinkove \cite{SW13}:

\begin{lem}
\label{lem-suff-small-discs-SR-model}
For $0< \eps < 1$, there exists $\delta > 0$ and $0 < a_0$ such that for $0 < a < a_0$
\begin{enumerate}[label = \roman*)]

    \item $\diam_{\co,0} (D_0(\delta)) < \eps$, and

    \item $\what{\diam}_{\co,a} (\pi^{-1} (D_0(\delta))) < \eps$.
\end{enumerate}
\end{lem}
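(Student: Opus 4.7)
The plan is to prove (i) and (ii) separately, with (ii) being an essentially immediate consequence of Lemma~\ref{lem-vol-diam-a-bounds}, and (i) following from the cone structure of $(V_0, g_{\co,0})$.

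For (i), I will exploit the fact that $g_{\co,0} = dr^2 + r^2 g_L$ is a metric cone with apex $0 \in V_0$ (see \eqref{eqn-cone-metric}). Since $0 \in D_0(\delta)$, any point $p \in D_0(\delta)$ may be connected to the apex by the radial curve $s \mapsto s \cdot p$, $s \in [0,1]$, which has $g_{\co,0}$-length exactly $r(p) \leq \delta$ and stays inside $D_0(\delta)$. Thus for any two points $p,q \in D_0(\delta)$, the concatenation $p \to 0 \to q$ is an admissible curve in $D_0(\delta)$ of length at most $2\delta$, giving $\diam_{\co,0}(D_0(\delta)) \leq 2\delta$. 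Choosing $\delta < \epsilon/2$ handles (i).

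For (ii), I note that under the identification $\pi \colon \what{V} \setminus E \to V_0 \setminus \{0\}$, the preimage $\pi^{-1}(D_0(\delta))$ is precisely the tube $\what{T}(\delta)$ (with the zero section $E$ attached in place of the apex). I then invoke the diameter estimate \eqref{eqn-diam-a-bound} from Lemma~\ref{lem-vol-diam-a-bounds}:
\begin{equation*}
    \what{\diam}_{\co,a}(\what{T}(\delta)) \leq a \cdot \what{\diam}_{\co,1}(\what{T}(K)) + C (\delta - aK),
\end{equation*}
valid for $0 < a \leq \delta/K$, where $K$ and $C$ are fixed constants independent of $a$ and $\delta$. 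Since $\what{\diam}_{\co,1}(\what{T}(K))$ is a finite fixed number and $C(\delta - aK) \leq C\delta$, I will first shrink $\delta$ so that $C\delta < \epsilon/2$ (and also $2\delta < \epsilon$ to retain (i)), then choose $a_0 \leq \delta/K$ small enough that $a_0 \cdot \what{\diam}_{\co,1}(\what{T}(K)) < \epsilon/2$. For every $0 < a < a_0$ both contributions are then bounded by $\epsilon/2$, yielding $\what{\diam}_{\co,a}(\what{T}(\delta)) < \epsilon$.

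There is no real obstacle here, since the heavy lifting has already been done in Lemma~\ref{lem-vol-diam-a-bounds} via the decomposition $\what{T}(\delta) = \what{T}(aK) \cup (\what{T}(\delta) \setminus \what{T}(aK))$ and the scaling property \ref{prop-SR-I} together with the asymptotic decay \ref{prop-SR-II}. The only subtlety worth stating explicitly in the write-up is that we need $\delta$ chosen first and $a_0$ chosen in terms of $\delta$ (not the other way around), so that the hypothesis $a \leq \delta/K$ of Lemma~\ref{lem-vol-diam-a-bounds} is met throughout the range $0 < a < a_0$.
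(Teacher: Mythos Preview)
Your proposal is correct and follows essentially the same approach as the paper: both use the cone structure to get $\diam_{\co,0}(D_0(\delta)) \leq 2\delta$ for (i), and both invoke the diameter bound \eqref{eqn-diam-a-bound} from Lemma~\ref{lem-vol-diam-a-bounds} on $\what{T}(\delta) = \pi^{-1}(D_0(\delta))$ for (ii). The only cosmetic difference is that the paper absorbs the term $a \cdot \what{\diam}_{\co,1}(\what{T}(K))$ into $C\delta$ directly (since $a \leq \delta/K$), taking $a_0 = \delta/K$, whereas you shrink $a_0$ separately to control that term; both are fine.
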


\begin{proof}
    We have that $D_0(\delta)$ is a closed disc of radius $\delta$ with respect to a cone metric $g_{\co,0} = dr^2 + r^2 \cdot g_L$. Standard arguments from Riemannian geometry give the diameter of $D_0(\delta)$ to be $2 \delta$. We can then take $\delta < \frac{\epsilon}{2}$ to satisfy the first condition.
    
    Next, we consider the second condition, and note $\pi^{-1}(D_0(\delta)) = \what{T}(\delta)$. Using \eqref{eqn-diam-a-bound}, we see that the result follows once we choose $\delta$ small enough such that $\delta < C^{-1} \epsilon$, $\delta < \frac{\epsilon}{2}$, and $a_0 = \frac{\delta}{K}$.

    %we can see that for all $0 < a \leq \frac{\delta}{K}$, we can estimate
    %\begin{equation*}
    %    \what{\diam}_{\co,a} (\what{T}(\delta)) \leq C \delta, 
    %\end{equation*}
    %for a uniform constant $C > 0$. 
    
   % As such, we choose $\delta$ small enough such that $\delta < C^{-1} \epsilon$, $\delta < \frac{\epsilon}{2}$, and $a_0 = \frac{\delta}{K}$ and the result follows.
\end{proof}

We can now apply Lemma \ref{lem-SM-bigbox} to prove convergence of the three classes of metrics on the small resolution:

\begin{itemize}
    \item Theorem \ref{thm-CdlO-SR-GH} - Convergence of the local models $(\what{T}(1),\what{d}_{\co,a}) \rightarrow (D_0(1), d_{\co,0})$:

    In this case, we have only one ODP singularity $s$. By the diameter estimate of $g_{\co,a}$ (Lemma \ref{lem-suff-small-discs-SR-model}), we see that for each $\eps > 0$, we can pick the set $\overline{G} = D_0(\delta)$ for an appropriately small $\delta > 0$ such that Lemma \ref{lem-SM-bigbox} applies.

    \item Theorem \ref{thm-FLY-SR-GH} - Convergence of the global balanced metrics $(\what{X}, \what{d}_{\FLY,a}) \rightarrow (X_0, d_{\FLY,0})$: 

    Here we use the fact that the Fu--Li--Yau metrics are, up to scaling, just the Candelas--de la Ossa metrics in a compact set around the $(-1,-1)$-curves $E_i$ and the ODP singularities $s_i$. For $\eps > 0$, we can pick $\overline{G_i} = D_0(\delta_i)$ for appropriately small $\delta_i$ around each singular point $s_i$.  Coupling this with the smooth convergence of the Fu--Li--Yau metrics on compact sets away from the $(-1,-1)$-curves, we may apply Lemma \ref{lem-SM-bigbox}.

    \item Theorem \ref{thm-HYM-SM-GH} - Convergence of the global HYM metrics $(\what{X},\what{d}_{\what{H}_a}) \rightarrow (X_0,d_{H_0})$:

    By Proposition \ref{propn-HYM-ests-SR}, we have the estimate 
    \begin{equation*}
        C^{-1} \cdot g_{\co,a} \leq \what{H}_a \leq C \cdot g_{\co,a}    
    \end{equation*}
    on the local sets $D_0(\delta_i)$ around each singularity $s_i$ where the Fu--Li--Yau metrics are a scaling of the Candelas--de la Ossa metrics. Lemma \ref{lem-suff-small-discs-SR-model} implies that for $\eps > 0$, there exists $\delta_i > 0$ and $a_0 > 0$ such that for all $0 < a < a_0$ then
    \begin{equation*}
        \mathrm{diam}_{H_0}(D_0(\delta_i)) < \epsilon, \quad \what{\diam}_{\what{H}_a}(\pi^{-1} (D_0(\delta_i))) < \epsilon.   
    \end{equation*}
We may therefore apply Lemma \ref{lem-SM-bigbox}.
    
\end{itemize}

\subsection{Estimates on the Smoothing}

\label{subsect-ests-SM}

We now prove the analogous statements on the smoothings. We will derive diameter bounds on $D_t(\beta_{t,\delta}) \subset V_t$ with respect to $g_{\co,t}$. Volume bounds can also be obtained in a similar way as for the small resolution, and we omit the details as they are not needed in the current work.

We recall that $\beta_{t,\rho}$ is defined by
\begin{equation*}
    \beta_{t,\rho} = \Big( \rho^3 + \frac{|t|^2}{4 \rho^3} \Big)^{\frac{1}{3}}
\end{equation*}
and the role of $\beta_{t,\rho}$ is so that the set $\{ r = \beta_{t,\rho} \} \subset V_t$ on the smoothing is identified with the set $\{ r = \rho \} \subset V_0$ on the cone via the map $\Phi_t$. 

The method we will use is analogous to that of the small resolutions. First, we use the Asymptotically Conical Decay Property \ref{prop-SM-II} to set $K > 0$ such that 
\begin{equation*}
    |(\Phi_t)^* (g_{\co,t}) - g_{\co,0}| \leq \frac{1}{2}
\end{equation*}
when $r > |t|^{\frac{1}{3}}K$. 

We will then split our region of interest $D_t(\beta_{t,\delta})$ into a ``disc" $D_t(\beta_{t,|t|^{\frac{1}{3}}K})$ and an ``annulus" $D_t(\beta_{t,\delta})\, \bs\, D_t(\beta_{t,|t|^{\frac{1}{3}}K})$.

\subsubsection{Bounds on the Disc}
\label{subsubsect-disc-SM}
We start by estimating the geometry of the disc $D_t(\beta_{t, |t|^{1/3}K})$. For this, we note that
\begin{equation*}
  S_{t^{-\frac{1}{3}}}:  \bigg( D_t(\beta_{t,|t|^{\frac{1}{3}}K}),g_{\co,t} \bigg) \rightarrow \bigg(D_1(\beta_{1,K}), |t|^{\frac{2}{3}} g_{\co,1} \bigg)
\end{equation*}
is an isometry. This is due to the scaling property $g_{\co,t} = |t|^{2/3} S^*_{t^{-1/3}}(g_{\co,1})$. It follows that
\begin{equation}
\label{eqn-diam-sphere}
    \diam_{\co,t} (D_t(\beta_{t,|t|^{\frac{1}{3}}K})) = |t|^{\frac{1}{3}} \cdot \diam_{\co,1} (D_1(\beta_{1,K})).
\end{equation}

\subsubsection{Annular Bounds}
\label{subsubsect-annular-SM}

Let $\delta > 0$. We now compute diameter bounds on the ``annular" region \\ $D_t(\beta_{t,\delta}) \, \bs\, D_t(\beta_{t,|t|^{\frac{1}{3}}K})$ when $0 < |t| \leq (\frac{\delta}{K})^3$. As before, this relies on \ref{prop-SM-II}.

Let $q \in D_t(\beta_{t,\delta}) \, \bs\, D_t(\beta_{t, |t|^{\frac{1}{3}}K})$ be an arbitrary point in the annular region. We will construct a curve $\tilde{\gamma}$ from $D_t(\beta_{t, |t|^{\frac{1}{3}}K})$ to $q$ and estimate its length $L_{\co,t}(\wtilde{\gamma})$. To do this, we will bring the setup back to the cone $V_0$ and use a radial ray.

Since $\Phi_t$ is a diffeomorphism on the annular region, we can write $q = \Phi_t(p)$ for $p \in V_0$. We note that $\beta$ is defined such that $|t|^{\frac{1}{3}}K < r(p) \leq \delta$ and we define $\rho>0$ by $r(p) = \rho$. Hence $|t|^{1/3} K < \rho \leq \delta$.

We can define a path $\gamma \: [|t|^{\frac{1}{3}} \frac{K}{\rho}, 1] \rarr V_0$ by
\begin{equation}
\label{eqn-gamma}
    \gamma(s) = s^{\frac{3}{2}} \cdot p.
\end{equation}
This path is chosen such that it begins in $D_0(|t|^{\frac{1}{3}}K)$ and moves outward along a ray emanating from $0$ to reach $\gamma(1) = p$. It can be checked that
\begin{equation*}
    \dot{\gamma}(s) = \rho \cdot \frac{\del}{\del r} \text{ and } r(\gamma(s)) = \rho \cdot s.
\end{equation*}
The corresponding path in $V_t$ is $\tilde{\gamma} = \Phi_t \circ \gamma$, and our goal is to estimate its length. We start with
\begin{equation*}
\begin{aligned}
    |L_{\co,t}(\wtilde{\gamma}) - L_{\co,0}(\gamma)| &\leq \int_{|t|^{1/3} \frac{K}{\rho}}^1 \Bigg| |\dot{\gamma}|_{(\Phi_t)^* (g_{\co,t})} - |\dot{\gamma}|_{g_{\co,0}} \Bigg| \, ds \\
    &\leq \int_{|t|^{1/3} \frac{K}{\rho}}^1  |(\Phi_t)^* g_{\co,t} - g_{\co,0}|_{g_{\co,0}} |\dot{\gamma}|_{g_{\co,0}} \, ds.
\end{aligned}
\end{equation*}
Using $|\dot{\gamma}|_{g_{\co,0}} = \rho$, $r(s) = s \cdot \rho$, and \ref{prop-SR-II}, we get
\begin{equation*}
    |L_{\co,t}(\wtilde{\gamma}) - L_{\co,0}(\gamma)| \leq C \int_{|t|^{1/3} \frac{K}{\rho}}^1  |t| r^{-3}(s) \rho \, ds \leq C \bigg( \frac{|t|^{1/3}}{K^2} - \frac{|t|}{\rho^2} \bigg).
\end{equation*}
We can also check that
\begin{equation*}
    L_{\co,0} (\gamma) = \int_{|t|^{1/3} \frac{K}{\rho}}^1 \rho \, ds =  \rho - |t|^{1/3} K.
\end{equation*}
By the triangle inequality and the above estimates, we conclude
\begin{equation*}
    L_{\co,t}(\wtilde{\gamma}) \leq C( \rho + |t|^{1/3} K^{-2}).
\end{equation*}
Since $|t| \leq \delta^3 K^{-3}$ and $\rho \leq \delta$, we conclude that
\begin{equation} \label{eqn-d_{co,t}}
    d_{\co,t} \Big(q, D_t(\beta_{t,|t|^{\frac{1}{3}}K}) \Big) \leq C \delta.
\end{equation}
Combining this with our diameter bound \eqref{eqn-diam-sphere} for $D_t(\beta_{t,|t|^{\frac{1}{3}}K})$, we get
\begin{equation} \label{eqn-diam-t-bound}
 \diam_{\co,t} \Big( D_t(\beta_{t,\delta}) \Big) \leq C \delta, \quad 0 < |t| \leq \frac{\delta^3}{K^3},
\end{equation}
which is the desired diameter bound for the Calabi-Yau metrics on the local model $(V_t,g_{\co,t})$.

\subsubsection{Bounds for the Fu--Li--Yau Metrics}
\label{subsubsect-FLY-bounds}

On the smoothings, the Fu--Li--Yau metrics are only close to scaled Candelas--de la Ossa metrics instead of being exactly equal to them. Due to this, we require a version of the diameter bound \eqref{eqn-diam-t-bound} for the Fu--Li--Yau metric. This will follow by virtue of the estimate \eqref{eqn-FLY-t-est}.

Consider a curve $\gamma$ on the disc $D_t(\beta_{t,\delta})$. We compare the length of this path $\gamma$ with respect to the Fu--Li--Yau metric and to a scaled Candelas--de la Ossa metric.
\begin{equation*}
    |L_{\FLY,t}(\gamma) - \sqrt{c} \cdot L_{\co,t}(\gamma)| \leq \frac{1}{\sqrt{c}} \int_{0}^1 |g_{\FLY,t} - c \cdot g_{\co,t}|_{g_{\co,t}} \cdot |\dot{\gamma}|_{g_{\co,t}} \, ds.
\end{equation*}
Using \eqref{eqn-FLY-t-est}, and recognizing that the $0<\delta<1$ and $K \gg 1$ appearing here can be chosen such that $\beta_{t,\delta}$ is smaller than the $\delta$ appearing in \eqref{eqn-FLY-t-est} for all $0 < |t| \leq  (\frac{\delta}{K})^3$, we have
\begin{equation*}
    |L_{\FLY,t}(\gamma) - \sqrt{c} \cdot L_{\co,t}(\gamma)| \leq C |t|^{\frac{2}{3}} \, L_{\co,t}(\gamma).
\end{equation*}
Thus
\begin{equation*}
    L_{\FLY,t}(\gamma) \leq C(|t|^{\frac{2}{3}}+1) \, L_{\co,t}(\gamma).
\end{equation*}
It then follows that
\begin{equation*}
    \diam_{\FLY,t} (D_t(\beta_{t,\delta})) \leq C (|t|^{\frac{2}{3}} +1) \diam_{\co,t} \Big( D_t(\beta_{t,\delta}) \Big).
\end{equation*}
Combining this with \eqref{eqn-diam-t-bound}, we have
\begin{lem}
\label{lem-diam-FLY-t-bounds}
    For $\delta > 0$, we have
    \begin{equation}
    \label{eqn-diam-FLY-t-bound}
        \diam_{\FLY,t} (D_t(\beta_{t,\delta})) \leq C \delta (|t|^{\frac{2}{3}}+1),
    \end{equation}
    for all $0 < |t| \leq (\frac{\delta}{K})^3$.
\end{lem}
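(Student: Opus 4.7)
The plan is to upgrade the pointwise comparison \ref{prop-FLY-Ism} between $g_{\FLY,t}$ and the scaled Candelas--de la Ossa metric $c \cdot g_{\co,t}$ into a comparison of lengths of curves on $D_t(\beta_{t,\delta})$, and then combine with the diameter bound \eqref{eqn-diam-t-bound} already established for $g_{\co,t}$.

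First I would verify that $D_t(\beta_{t,\delta})$ sits inside the neighbourhood of the singular point on which \eqref{eqn-FLY-t-est} holds. Since $\beta_{t,\delta} = (\delta^3 + |t|^2/(4\delta^3))^{1/3}$ and we are in the regime $|t| \leq (\delta/K)^3$ with $K$ large, a direct computation shows $\beta_{t,\delta}$ is bounded by a uniform multiple of $\delta$; choosing $\delta$ small relative to the parameter $\delta$ appearing in \ref{prop-FLY-Ism} then guarantees $D_t(\beta_{t,\delta})$ is contained in the region where the comparison estimate applies.

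The main computation is then purely integral: for a piecewise $C^1$ curve $\gamma$ in $D_t(\beta_{t,\delta})$, I would use the elementary inequality $|\sqrt{A}-\sqrt{B}| \leq |A-B|/\sqrt{B}$ with $A = g_{\FLY,t}(\dot\gamma,\dot\gamma)$ and $B = c\cdot g_{\co,t}(\dot\gamma,\dot\gamma)$, together with \eqref{eqn-FLY-t-est}, to estimate
\begin{equation*}
    |L_{\FLY,t}(\gamma) - \sqrt{c}\, L_{\co,t}(\gamma)| \leq \frac{1}{\sqrt{c}} \int_0^1 |g_{\FLY,t} - c\cdot g_{\co,t}|_{g_{\co,t}}\, |\dot\gamma|_{g_{\co,t}}\, ds \leq C|t|^{2/3}\, L_{\co,t}(\gamma).
\end{equation*}
This rearranges to $L_{\FLY,t}(\gamma) \leq C(1 + |t|^{2/3})\, L_{\co,t}(\gamma)$. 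Taking infima over admissible curves joining two points of $D_t(\beta_{t,\delta})$, and then supremum over pairs of points, promotes this length comparison to $\diam_{\FLY,t}(D_t(\beta_{t,\delta})) \leq C(1+|t|^{2/3})\, \diam_{\co,t}(D_t(\beta_{t,\delta}))$; inserting \eqref{eqn-diam-t-bound} closes the argument.

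The only non-routine point is the radius bookkeeping in the first step, since $\beta_{t,\delta}$ and $\delta$ play slightly different roles on the two sides. Beyond this, no deeper obstacle arises, because the heavy lifting (the pointwise metric comparison \ref{prop-FLY-Ism} and the Calabi--Yau diameter estimate \eqref{eqn-diam-t-bound}) is already in hand.
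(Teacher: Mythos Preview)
Your proposal is correct and follows essentially the same argument as the paper: both first confine $D_t(\beta_{t,\delta})$ to the region where \eqref{eqn-FLY-t-est} holds, then use the pointwise comparison to get $|L_{\FLY,t}(\gamma) - \sqrt{c}\,L_{\co,t}(\gamma)| \leq C|t|^{2/3} L_{\co,t}(\gamma)$ for curves in the disc, and finally feed in the Candelas--de la Ossa diameter bound \eqref{eqn-diam-t-bound}. Your remark that the only care needed is the $\beta_{t,\delta}$ versus $\delta$ bookkeeping matches exactly the caveat the paper records.
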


\subsubsection{Applying the Main Lemma}
Using the diameter estimates, we prove an analog of Lemma \ref{lem-suff-small-discs-SR-model} in the case of the smoothings for the Candelas--de la Ossa and Fu--Li--Yau metrics.

\begin{lem}
\label{lem-suff-small-discs-SM-model}
    For $0 < \eps < 1$, there exists $\delta > 0$ and $0 < t_0$ such that for $0 < |t| < t_0$,
    
    \begin{enumerate}[label = \roman*)]
        \item $\diam_{\co,0} (D_0(\delta)) < \eps$, and
        
        \item $\diam_{\co,t} (D_t(\beta_{t,\delta})) < \eps$.
    \end{enumerate}

    The result also holds when taking diameters with respect to the Fu--Li--Yau metrics $g_{\FLY,0}$ and $g_{\FLY,t}$ instead of the Candelas--de la Ossa metrics $g_{\co,0}$ and $g_{\co,t}$.
\end{lem}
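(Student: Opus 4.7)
The plan is to mirror the strategy of Lemma \ref{lem-suff-small-discs-SR-model}, using the diameter estimates already established for the annular regions on the smoothings in place of those on the small resolution. All the hard work is already in the preceding subsections; what remains is to package the estimates.

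First, I would handle (i) exactly as in the small resolution case: since $g_{\co,0} = dr^2 + r^2 \cdot g_L$ is a cone metric on $V_0$, a standard radial argument shows $\diam_{\co,0}(D_0(\delta)) \leq 2\delta$. Choosing $\delta < \eps/2$ disposes of this part.

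Next, for (ii), I would apply the diameter estimate \eqref{eqn-diam-t-bound} derived in Section \ref{subsubsect-annular-SM}, namely
\[
\diam_{\co,t}\bigl(D_t(\beta_{t,\delta})\bigr) \leq C\delta \quad \text{for all } 0 < |t| \leq (\delta/K)^3,
\]
and simply require $\delta$ to be small enough that $C\delta < \eps$. Then setting $t_0 = (\delta/K)^3$ gives the required threshold on $|t|$.

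For the Fu--Li--Yau version of the lemma, I would argue as follows. Near each ODP singularity $s_i \in X_0$, Property \ref{prop-FLY-I} (taken at $a=0$) gives $\omega_{\FLY,0}|_{\{r<\delta\}} = R \cdot \omega_{\co,0}$ for some fixed constant $R>0$, so lengths scale by $\sqrt{R}$ and hence
\[
\diam_{\FLY,0}(D_0(\delta)) \leq \sqrt{R} \cdot \diam_{\co,0}(D_0(\delta)) \leq 2\sqrt{R}\,\delta.
\]
On the smoothing side, Lemma \ref{lem-diam-FLY-t-bounds} gives
\[
\diam_{\FLY,t}\bigl(D_t(\beta_{t,\delta})\bigr) \leq C\delta\,(|t|^{\frac{2}{3}}+1),
\]
for $0 < |t| \leq (\delta/K)^3$. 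Shrinking $t_0$ further if needed so that $|t|^{2/3} \leq 1$, both Fu--Li--Yau diameters are bounded by $C'\delta$ for a uniform $C'$, and choosing $\delta$ so that $C'\delta < \eps$ concludes the proof. The whole argument is essentially a bookkeeping step — the real content is in the asymptotically conical decay estimate \ref{prop-SM-II} that feeds into \eqref{eqn-diam-t-bound} and in the local model property \ref{prop-FLY-Ism} that allows the comparison between $g_{\FLY,t}$ and $g_{\co,t}$ — so no genuine obstacle remains here.
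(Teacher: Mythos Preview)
Your proposal is correct and follows essentially the same approach as the paper's proof: both dispose of (i) via the cone-metric bound $\diam_{\co,0}(D_0(\delta)) \leq 2\delta$, invoke \eqref{eqn-diam-t-bound} (resp.\ \eqref{eqn-diam-FLY-t-bound}) for (ii), and then choose $\delta$ small enough and $t_0 = (\delta/K)^3$. You supply slightly more detail on the Fu--Li--Yau case for (i) --- namely that $g_{\FLY,0}$ is a fixed multiple of $g_{\co,0}$ near the singularities --- which the paper leaves implicit, but the argument is the same.
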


%As usual $\Phi_t$ here denotes its restriction to the set $\{r > (\frac{t}{2})^{\frac{1}{3}}\}$ where it is a diffeomorphism.

%For the second condition, we note that $\Phi_t( D_0(\delta)) = D_t(\beta_{t,\delta})$. In either case, 

\begin{proof}
    As was the case in Lemma \ref{lem-suff-small-discs-SR-model}, the first condition holds as long as $\delta < \frac{\eps}{2}$. Using \eqref{eqn-diam-t-bound} or \eqref{eqn-diam-FLY-t-bound}, we see that for all $0 < |t| \leq (\frac{\delta}{K})^3$, we have the estimate
    \begin{equation}
        \diam_{\co,t} \Big( D_t(\beta_{t,\delta})\Big) \leq C \delta
    \end{equation}
    for some uniform constant $C > 0$. The result follows.
\end{proof}

Applying our lemma then gives convergence of our metrics on the smoothings:

\begin{itemize}
    \item Convergence of the local models $(D_t(\beta_{t,1}), d_{\co,t}) \rarr (D_0(1), d_{\co,0})$:

    We have only one ODP singularity $s$ for this case. Using our diameter estimate \eqref{eqn-diam-t-bound}, we have that for $\eps > 0$, we can pick $\overline{G} = D_0(\delta)$, $K_t = D_0((\frac{|t|}{2})^{\frac{1}{3}})$, and $C_t = D_t(|t|^{\frac{1}{3}})$ for sufficiently small $\delta > 0$ and $t$ such that Lemma \ref{lem-SM-bigbox} applies with the maps $\Phi_t$.

    \item Convergence of the global balanced metrics $(X_t,d_{\FLY,t}) \rarr (X_0, d_{\FLY,0})$:

    Here, we use the diameter estimate \eqref{eqn-diam-FLY-t-bound} instead. For $\eps > 0$, we can again pick $\overline{G_i} = D_0(\delta_i)$, $K_{i,t} = D_0((\frac{|t|}{2})^{\frac{1}{3}})$, and $C_{i,t} = D_t(|t|^{\frac{1}{3}})$ for sufficiently small $\delta_i > 0$ and $t$ around each singularity $s_i$. As such, we can apply the lemma with the maps $\Phi_t$.

    \item Convergence of the global HYM metrics $(X_t,d_{H_t}) \rarr (X_0,d_{H_0})$:

    Similarly to the case of the small resolutions, we use estimate \eqref{hym-sm-est}, which is,
    \begin{equation}
        C^{-1} \cdot g_{\FLY,t} \leq H_t \leq C \cdot g_{{\rm FLY,t}}
    \end{equation}
    on the local sets $D_0(\delta_i)$ around the singularities $s_i$. Set $K_{i,t} = D_0((\frac{|t|}{2})^{\frac{1}{3}})$ and $C_{i,t} = D_t(|t|^{\frac{1}{3}})$. Lemma \ref{lem-suff-small-discs-SM-model} then gives that for $\eps > 0$, there exists $\delta_i > 0$ and $t_0 > 0$ such that for all $0 < |t| < t_0$,
    \begin{equation}
        \diam_{H_0} (D_0(\delta_i)) < \eps, \quad \diam_{H_t} (D_t(\beta_{t,\delta_i})) < \eps.
    \end{equation}
    Applying Lemma \ref{lem-SM-bigbox} using the maps $\Phi_t$ proves the result.
\end{itemize}

Combining these results with the analogous results for the small resolution found at the end of section~\ref{subsubsect-annular-SR}, we obtain Theorem~\ref{thm-GH-sing}.

\begin{appendix}

\section{Hermitian--Yang--Mills Metrics on the Resolution} \label{appendix-hym}

The presentation in \cite{CPY21} only uses convergence of Hermitian--Yang--Mills metrics along a subsequence of the Fu--Li--Yau metrics $\what{\omega}_{\FLY,a_k}$ as $a_k \rightarrow 0$. Convergence along the full sequence $a \rightarrow 0$ also follows from the estimates in \cite{CPY21}, and in this section we provide the full details.
\smallskip
\par We start by establishing notation. We denote the components of a Hermitian metric $H$ on $T^{1,0}X$ by $H_{\alpha \bar{\beta}}$, and this convention is such that the associated inner product on $T^{1,0}X$ is given by
\begin{equation*}
\langle u, v \rangle = u^\alpha H_{\alpha \bar{\beta}} \overline{v^\beta} \quad u, v \in \Gamma(T^{1,0}X).
\end{equation*}
The components of the inverse of $H$ are denoted $H^{\bar{\mu} \nu}$ so that $H_{\mu \bar{\nu}} H^{\bar{\nu} \kappa} = \delta_\mu{}^\kappa$. The Chern connection associated to $H$ will be denoted $\nabla$, so that
\begin{equation*}
\nabla_k u^\alpha = \partial_k u^\alpha + u^\beta (\partial_k H_{\beta \bar{\gamma}} H^{\bar{\gamma} \alpha}), \quad \nabla_{\bar{k}} u^\alpha = \partial_{\bar{k}} u^\alpha.    
\end{equation*}
The Chern curvature of $H$ will be denoted by $F \in \Lambda^{1,1}({\rm End} \, T^{1,0}X)$ with conventions
\begin{equation*}
   F_\beta{}^\alpha{}_{j \bar{k}} = -\partial_{\bar{k}} (\partial_j H_{\beta \bar{\gamma}} H^{\bar{\gamma} \alpha}). 
\end{equation*}
We often omit the endomorphism indices and write $F_{j \bar{k}} = - \partial_{\bar{k}}(\partial_j H H^{-1})$. Given two metrics $H$ and $\hat{H}$, the difference in curvature tensors is
\begin{equation} \label{diff-curv}
    (F_H)_{j \bar{k}} - (F_{\hat{H}})_{j \bar{k}} = - \partial_{\bar{k}} (\hat{\nabla}_j h h^{-1}), \quad h = H \hat{H}^{-1},
\end{equation}
where $\hat{\nabla} h = \partial h + [h,\partial \hat{H} \hat{H}^{-1}]$ is the induced connection on ${\rm End} \, T^{1,0}X$.

Let $\omega_a = i (g_a)_{j \bar{k}} dz^j \wedge d \bar{z}^k$ be the sequence of Fu--Li--Yau balanced metrics on the resolution $\hat{X}$. To ease notation, in this section we write $\omega_a$ instead of $\what{\omega}_{\FLY,a}$. We will use the notation
\begin{equation*}
    \sqrt{-1} \Lambda_\omega F = g^{j \bar{k}} F_{j \bar{k}}.
\end{equation*}
From \cite{CPY21}, there is a sequence $H_a$ of Hermitian--Yang--Mills metrics on $T^{1,0}X$ solving 
\begin{equation*}
    \sqrt{-1} \Lambda_{\omega_a} F_{H_a} = 0,
\end{equation*}
along with estimates
\begin{equation} \label{HYM-estimates}
    C^{-1} \cdot g_a \leq H_a \leq C \cdot g_a, \quad r |\nabla H_a|_{g_a} \leq C,
\end{equation}
where the constants are uniform in $a$. Furthermore, for each $K \subseteq \hat{X} \backslash E$ there are estimates
\begin{equation*}
    \sup_K |\nabla^j H_a|_{g_a} \leq C(K,j).
\end{equation*}
We first normalize the sequence $\{ H_a \}$. We define
\begin{equation*}
    f_a := \log \frac{\det H_a}{\det g_a},
\end{equation*}
and after replacing $H_a$ with $e^{-c_a} H_a$, we can fix the normalization
\begin{equation} \label{HYM-normalization}
    \int_{\hat{X}} f_a \, d {\rm vol}_{g_a} = 0.
\end{equation}
Since $C^{-1} g_a \leq H_a \leq C g_a$, the constants $e^{-c_a}$ are uniformly bounded, and so the normalized sequence $\{ H_a \}$ still satisfies the estimates \eqref{HYM-estimates}. By the Arzel\`{a}--Ascoli theorem, for each $K \subset \hat{X} \backslash E$ there is a subsequence $H_{b_i} \rightarrow H_0$ converging uniformly. Taking an exhaustion of compact sets and identifying $\hat{X} \backslash E$ with $(X_0)_{\rm reg}$, we obtain a subsequence $H_{b_i} \rightarrow H_0$ which converges pointwise on $(X_0)_{\rm reg}$ and uniformly on compact subsets.

\smallskip
\par The goal of this appendix is to upgrade the subsequential convergence to convergence of the full sequence $\{ H_a \}$ uniformly on compact subsets of $(X_0)_{\rm reg}$. 

\begin{lem} \label{lem:fullconv}
For any compact sets $K \subseteq \what{X} \backslash E$, we have convergence
\[
H_a \rightarrow H_0
\]
in the $C^\ell(K)$ norm for any integer $\ell$.
\end{lem}

We proceed by contradiction. Suppose not, so that there exists an $\epsilon>0$, a compact subset $K_0 \subseteq (X_0)_{\rm reg}$, and a subsequence $a_i \rightarrow 0$ with
\begin{equation} \label{HYM-subseq}
    \sup_{K_0} |H_{a_i} - H_0|_{g_0} \geq \epsilon
\end{equation}
for all $a_i$. We can apply the estimates \eqref{HYM-estimates} and the Arzel\`{a}--Ascoli theorem to the $\{H_{a_i} \}$ to extract a further subsequence $\{ H_{a_{i_k}} \}$ converging uniformly to a limit $\tilde{H}_0$ on compact subsets of $(X_0)_{\rm reg}$. 

We now have two limiting metrics $H_0$ and $\tilde{H}_0$. Our goal will be to show that $H_0=\tilde{H}_0$. Taking the limit of \eqref{HYM-subseq} along the subsequence $a_{i_k}$ gives
\begin{equation*}
    \sup_{K_0} |\tilde{H}_0 - H_0|_{g_0} \geq \epsilon,
\end{equation*}
which is the desired contradiction.

The main step in showing $H_0 = \tilde{H}_0$ is the following:

\begin{lem} \label{lem:uniqueness}
Equip $(X_0)_{\rm reg}$ with the Fu-Li-Yau metric $g_0$. Let $H_0$ and $\tilde{H}_0$ be two metrics on the holomorphic tangent bundle of $(X_0)_{\rm reg}$, each satisfying
\begin{equation} \label{HYM-limit-estimates}
    \omega_0^2 \wedge F_{H} = 0, \quad C^{-1} \cdot g_0 \leq H \leq C \cdot g_0, \quad r |\nabla H|_{g_0} \leq C.
\end{equation}
Then $H_0 = \lambda \tilde{H}_0$ for a constant $\lambda \in \mathbb{C}$.
\end{lem}

We will prove Lemma \ref{lem:uniqueness} below, but let us assume it for now and complete the argument. Combining Lemma \ref{lem:uniqueness} with the normalization condition \eqref{HYM-limit-int} proved below, we obtain the uniqueness $H_0 = \tilde{H}_0$. This completes the proof of Lemma \ref{lem:fullconv}.

The normalization \eqref{HYM-normalization} along the sequence leads to the following normalization for the limit:

\begin{lem} Set $e^u = \tilde{H}_0 H_0^{-1}$. Then
\begin{equation} \label{HYM-limit-int}
    \int_{(X_0)_{\rm reg}} ({\rm Tr} \, u) \, d {\rm vol}_{g_0} = 0.
\end{equation}
\end{lem}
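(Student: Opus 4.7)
The plan is to identify $\mathrm{Tr}\, u$ with a difference of log-density functions, and then pass to the limit in the normalization \eqref{HYM-normalization} along each of the two subsequences $\{H_{b_i}\}$ and $\{H_{a_{i_k}}\}$, exploiting the uniform $L^\infty$ bound on $f_a$ together with the volume bound \eqref{eqn-vol-a-bound} to kill the contribution near the exceptional curves. Since $e^u = \tilde{H}_0 H_0^{-1}$ is an endomorphism of $T^{1,0}\hat{X}$, one has
\begin{equation*}
\mathrm{Tr}\, u = \log \det(e^u) = \tilde{f}_0 - f_0,
\end{equation*}
where $f_0 := \log(\det H_0 / \det g_0)$ and $\tilde{f}_0 := \log(\det \tilde{H}_0 / \det g_0)$ are both well-defined on $(X_0)_{\mathrm{reg}}$. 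It therefore suffices to prove separately that $\int_{(X_0)_{\mathrm{reg}}} f_0 \, d\mathrm{vol}_{g_0} = 0$ and $\int_{(X_0)_{\mathrm{reg}}} \tilde{f}_0 \, d\mathrm{vol}_{g_0} = 0$.

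To establish the first vanishing, fix $\delta > 0$ small and let $\hat{T}(\delta) \subset \hat{X}$ denote the union of tubes of radius $\delta$ around the exceptional curves $E_i$ (identified, via the blowdown $\pi$ and the exceptional set having measure zero, with its image in $X_0$). From \eqref{HYM-estimates} the functions $f_a$ are uniformly bounded in $L^\infty$, while the scaling from property \ref{prop-FLY-I} combined with \eqref{eqn-vol-a-bound} yields
\begin{equation*}
\mathrm{Vol}_{g_a}(\hat{T}(\delta)) \leq C\delta^4
\end{equation*}
uniformly for all sufficiently small $a$. Splitting the normalization identity along the subsequence $b_i \to 0$,
\begin{equation*}
0 = \int_{\hat{X}} f_{b_i} \, d\mathrm{vol}_{g_{b_i}} = \int_{\hat{X} \setminus \hat{T}(\delta)} f_{b_i} \, d\mathrm{vol}_{g_{b_i}} + \int_{\hat{T}(\delta)} f_{b_i} \, d\mathrm{vol}_{g_{b_i}},
\end{equation*}
the second integral is bounded in absolute value by $C\delta^4$ uniformly in $i$. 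On the compact region $\hat{X} \setminus \hat{T}(\delta)$, the uniform convergence $H_{b_i} \to H_0$ (from the Arzel\`{a}--Ascoli argument already used to extract $H_0$) together with property \ref{prop-FLY-II} implies $f_{b_i} \, d\mathrm{vol}_{g_{b_i}} \to f_0 \, d\mathrm{vol}_{g_0}$ uniformly. Letting $i \to \infty$ yields
\begin{equation*}
\left| \int_{\hat{X} \setminus \hat{T}(\delta)} f_0 \, d\mathrm{vol}_{g_0} \right| \leq C\delta^4.
\end{equation*}
Since $f_0$ is bounded and $(X_0)_{\mathrm{reg}}$ has finite $g_0$-volume, dominated convergence as $\delta \to 0$ gives the desired $\int_{(X_0)_{\mathrm{reg}}} f_0 \, d\mathrm{vol}_{g_0} = 0$.

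Running the identical argument with $\{a_{i_k}\}$ and $\tilde{f}_0$ in place of $\{b_i\}$ and $f_0$ gives $\int_{(X_0)_{\mathrm{reg}}} \tilde{f}_0 \, d\mathrm{vol}_{g_0} = 0$, and subtracting finishes the proof. The only real obstacle is controlling the tubular contribution uniformly in $a$; this is precisely what the volume bound \eqref{eqn-vol-a-bound}, combined with the FLY-to-CO scaling from property \ref{prop-FLY-I}, delivers. The remainder is a routine exhaustion by dominated convergence.
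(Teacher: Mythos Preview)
Your proof is correct and follows essentially the same route as the paper: identify $\mathrm{Tr}\,u = \tilde f_0 - f_0$, then show each limit integral vanishes by splitting into a tubular region (controlled by the uniform $L^\infty$ bound on $f_a$ and the volume estimate \eqref{eqn-vol-a-bound}) and a compact region (handled by uniform convergence together with the normalization \eqref{HYM-normalization}), and finally let $\delta \to 0$. The only cosmetic difference is that you split the normalization identity before passing to the limit whereas the paper splits the limiting integral first; the powers of $\delta$ you obtain differ slightly from those in the paper, but any positive power suffices.
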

\begin{proof}
We have two subsequences $\{ b_i \}$ and $\{ a_i \}$ such that $f_{b_i} \rightarrow f_0$ and $f_{a_i} \rightarrow \tilde{f}_0$ pointwise on $\hat{X} \backslash E$ and uniformly on compact sets. Taking the logarithm of
\begin{equation*}
    \frac{\det \tilde{H}_0}{\det g_0} = (\det e^u)  \frac{\det H_0}{\det g_0}
\end{equation*}
we have $\tilde{f}_0 = {\rm Tr} \, u + f_0$. Thus if we can show that
\begin{equation*}
    \int_{(X_0)_{\rm reg}} f_0 \, d {\rm vol}_{g_0} = 0, \quad  \int_{(X_0)_{\rm reg}} \tilde{f}_0 \, d {\rm vol}_{g_0} = 0
\end{equation*}
we will have established \eqref{HYM-limit-int}. The calculation of both these integrals is the same, so we only calculate for the sequence $\{ b_i \}$. We split the integral as
\begin{equation*}
    \int_{(X_0)_{\rm reg}} f_0 \, d {\rm vol}_{g_0} = \int_{\{r < \delta \}} f_0 \, d {\rm vol}_{g_0}+ \int_{ \{ r \geq \delta \}} f_0 \, d {\rm vol}_{g_0}
\end{equation*}
The first integral can be estimated by using $|f_0| \leq C$ \eqref{HYM-limit-estimates}, so that
\begin{align*}
    \bigg| \int_{\{r < \delta \}} f_0 \, d {\rm vol}_{g_0} \bigg| &\leq C \int_{\{r < \delta \}} d {\rm vol}_{g_0} \leq C \delta^6.
\end{align*}
The second integral can be estimated by passing the limit under the integral over compact sets and using $\int_X f_i \, d {\rm vol}_{g_i} = 0$.
\begin{align*}
    \bigg| \int_{\{r \geq \delta \}} f_0 \, d {\rm vol}_{g_0} \bigg| &=  \lim_{b_i \rightarrow 0} \bigg| \int_{\{r \geq \delta \}} f_{b_i} d {\rm vol}_{g_{b_i}} \bigg|\nonumber\\
    &=  \lim_{b_i \rightarrow 0}\bigg| \int_{\{r < \delta \}} f_{b_i} d {\rm vol}_{g_{b_i}} \bigg| \nonumber\\
    &\leq  C \limsup_{b_i \rightarrow 0} {\rm Vol}_{\co,b_i} (\hat{T}(\delta))
\end{align*}
Here we used $C^{-1} g_b \leq H_b \leq C g_b$ implies $|f_b|\leq C$. We use the volume estimate \eqref{eqn-vol-a-bound} to conclude
\begin{equation*}
 \limsup_{b_i \rightarrow 0}   {\rm Vol}_{\co,b_i} (\hat{T}(\delta)) \leq C \delta^6.
\end{equation*}
We can now send $\delta \rightarrow 0$ to complete the proof of the lemma.
\end{proof}

All that remains now is to prove  Lemma \ref{lem:uniqueness}. We need to show that $h  = \tilde{H}_0 H_0^{-1}$ is a multiple of the identity endomorphism. Taking the trace of \eqref{diff-curv} and using the Hermitian--Yang--Mills equation gives the following equation for $h$.
\begin{equation} \label{perturbed-HYM}
 (g_0)^{j \bar{k}} \partial_{\bar{k}} ((\nabla^{H_0})_j h h^{-1}) = 0.
\end{equation}
The following key identity was observed by Uhlenbeck--Yau \cite{uhlenbeckyau86}. We will use a version from Jacob--Walpuski \cite{jacobwalpuski} and give the proof for completeness.

\begin{lem}
Let $H= e^u \hat{H}$ be a pair of Hermitian metrics on a holomorphic bundle $E \rightarrow X$ over a Hermitian manifold $(X,g)$. Write $h = e^u$. Then we have the identity
    \begin{equation} \label{Laplace-u-id}
        \Delta_g |u|^2_{\hat{H}} = 2 \langle g^{\bar{k} j} \partial_{\bar{k}} (\hat{\nabla}_j h h^{-1}), u \rangle_{\hat{H}} + 2 g^{\bar{k} j} \langle \hat{\nabla}_j h h^{-1}, \hat{\nabla}_k u \rangle_{\hat{H}}.
    \end{equation}
If we assume $|u|_{\hat{H}} \leq R$ then there exists a constant $C(R)>0$ such that the following estimate holds:
        \begin{equation} 
        \label{Laplace-u-est}
        \Delta_g |u|^2_{\hat{H}} \geq 2 \langle g^{j \bar{k}} \partial_{\bar{k}} (\hat{\nabla}_j h h^{-1}), u \rangle_{\hat{H}} + \frac{1}{C} |\hat{\nabla} u|^2_{\hat{H}}.
    \end{equation}
 
\end{lem}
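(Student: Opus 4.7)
The plan has two steps. First, I would prove the identity (\ref{Laplace-u-id}) by a direct computation with the Chern connection. Since $u=\log h$ is $\hat H$-self-adjoint, $|u|^2_{\hat H} = \mathrm{tr}(u^2)$; I apply the scalar Chern Laplacian $\Delta_g = 2 g^{j\bar k}\partial_j\partial_{\bar k}$ to this. Using compatibility of the induced Chern connection $\hat\nabla$ on $\mathrm{End}(T^{1,0}X)$ with the pairing $\langle\cdot,\cdot\rangle_{\hat H}$, together with $\hat\nabla_{\bar k} = \partial_{\bar k}$ on holomorphic bundles, one distributes the two derivatives through the inner product and produces terms of the form $\langle \hat\nabla_j\hat\nabla_{\bar k}u,\, u\rangle_{\hat H}$ plus a quadratic gradient term. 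The crucial algebraic input is Duhamel's formula
\[
\hat\nabla_j h\, h^{-1} \;=\; \int_0^1 e^{s\,\mathrm{ad}\,u}\!\left(\hat\nabla_j u\right) ds,
\]
which relates $\partial_{\bar k}(\hat\nabla_j h\, h^{-1})$ to $\hat\nabla_{\bar k}\hat\nabla_j u$ modulo commutator corrections involving $u$ and $\hat\nabla u$. Substituting this into the expansion of $\Delta_g|u|^2_{\hat H}$ and using the cyclic invariance of the trace reorganizes the resulting terms into precisely the two pairings on the right-hand side of (\ref{Laplace-u-id}).

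Next, I would deduce the estimate (\ref{Laplace-u-est}) from the identity by a pointwise linear-algebra lower bound on the cross term. Diagonalize $u$ in a local $\hat H$-orthonormal frame, writing $u=\mathrm{diag}(\lambda_1,\ldots,\lambda_n)$ with $|\lambda_a|\le R$. The same Duhamel formula then gives matrix entries
\[
(\hat\nabla_j h\, h^{-1})_{ab} \;=\; \phi(\lambda_a - \lambda_b)\,(\hat\nabla_j u)_{ab}, \qquad \phi(x) := \tfrac{e^x - 1}{x},
\]
with $\phi(0):=1$. Since $\phi$ is continuous and strictly positive on $\mathbb{R}$, there is a constant $c(R)>0$ such that $\phi(x)\ge c(R)$ for all $|x|\le 2R$. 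Consequently the Hermitian pairing of $\hat\nabla_j h\, h^{-1}$ against $\hat\nabla_k u$ is bounded below by $c(R)|\hat\nabla u|^2_{\hat H}$ pointwise, which when contracted with $g^{j\bar k}$ yields the claimed lower bound $\tfrac{1}{C(R)}|\hat\nabla u|^2_{\hat H}$.

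The main obstacle is the bookkeeping in step one: the commutator corrections that arise when rewriting $\hat\nabla_j\hat\nabla_{\bar k}u$ in terms of $\partial_{\bar k}(\hat\nabla_j h\, h^{-1})$ must combine, through cyclic trace manipulations, into exactly the stated right-hand side of (\ref{Laplace-u-id}) with no leftover terms. Once this computation is verified, the estimate (\ref{Laplace-u-est}) follows from the elementary positivity of $\phi$ on a compact range, and the rest of Appendix~\ref{appendix-hym} then uses it in a standard Uhlenbeck--Yau style maximum-principle/integration argument (noting that the first term on the right of (\ref{Laplace-u-est}) vanishes when integrated against $1$, by (\ref{perturbed-HYM}) and \eqref{HYM-limit-int}) to conclude $u\equiv 0$ and therefore $\tilde H_0 = H_0$.
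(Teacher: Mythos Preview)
Your approach is correct in spirit, and for the estimate \eqref{Laplace-u-est} it is essentially identical to the paper's: your function $\phi(x)=(e^x-1)/x$ and the paper's inequality $x(1-e^{-x})\ge C^{-1}x^2$ for $|x|\le R$ are the same statement written in terms of the eigenvalues of $u$ versus those of $h=e^u$.

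For the identity \eqref{Laplace-u-id}, however, the paper avoids entirely what you flag as ``the main obstacle.'' Rather than expanding $\Delta_g|u|^2$ into second-order terms $\langle\hat\nabla_j\hat\nabla_{\bar k}u,u\rangle$ and then invoking Duhamel to rewrite these modulo commutator corrections, the paper first establishes the \emph{first-order} identity
\[
\langle \hat\nabla_j u,\,u\rangle_{\hat H} \;=\; \langle \hat\nabla_j h\,h^{-1},\,u\rangle_{\hat H}.
\]
In a local frame diagonalizing $u$, this is immediate: pairing against the diagonal endomorphism $u$ picks out only the diagonal entries, and those agree for $\hat\nabla_j u$ and $\hat\nabla_j h\,h^{-1}$ (both equal $\partial_j\log\lambda_a$). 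Equivalently, your Duhamel formula gives it in one line via cyclic invariance, since $e^{su}$ commutes with $u$. Once this is in hand, one has $\partial_j|u|^2 = 2\langle \hat\nabla_j h\,h^{-1},u\rangle$, and a single further $\partial_{\bar k}$ produces \eqref{Laplace-u-id} directly, with no commutator bookkeeping at all. So your route works, but the paper's order of operations (swap at first order, then differentiate) is cleaner and removes the step you were worried about.
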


\begin{proof}
We start by recalling the definition of $u$. Let $\{ e_a \}$ be a local smooth frame for $T^{1,0}X$ such that
\begin{equation*}
\hat{H} = \sum_{a=1}^3 e^a \otimes \overline{e^a}, \quad h = \sum_{a=1}^3 \lambda_a \, e_a \otimes e^a.
\end{equation*}
Then $u$ is defined by
\begin{equation*}
    u = \sum_{a=1}^3 (\log \lambda_a) \, e_a \otimes e^a.
\end{equation*}
In this frame, the adjoint $\dagger$ with respect to $\hat{H}$ is just the conjugate-transpose of the components, and so $h^\dagger = h$ and $u^\dagger =u$.

The connection coefficients are $\hat{\nabla}_i e_a = A_{i a}{}^b e_b$ and metric compatibility implies $A_{i a}{}^b = -\overline{A_{\bar{i} b}{}^a}$. We can then compute
\begin{equation*}
(\hat{\nabla}_i u)_a{}^b = \frac{\partial_i \lambda_a}{\lambda_a} \delta_a{}^b + (\log \lambda_a - \log \lambda_b) A_{ia}{}^b
\end{equation*}
and
\begin{equation*}
(\hat{\nabla}_i h h^{-1})_a{}^b = \frac{\partial_i \lambda_a}{\lambda_a} \delta_a{}^b + \lambda_b^{-1}(\lambda_a-\lambda_b) A_{ia}{}^b.
\end{equation*}
The inner product on endomorphisms is $\langle u,v \rangle_{\hat{H}} = {\rm Tr} \, u v^\dagger_{\hat{H}}$, and to ease notation we drop the subscript $\hat{H}$. We have
\begin{equation*}
    \partial_i |u|^2 = \langle \hat{\nabla}_i u,u \rangle + \langle u, \hat{\nabla}_{\bar{i}} u \rangle = 2 \langle \hat{\nabla}_i u, u \rangle,
\end{equation*}
since $u^\dagger = u$ and $(\nabla_{\bar{i}} u)^\dagger = \hat{\nabla}_i u$. Furthermore, we notice that the expressions above imply
\begin{equation*}
    \langle \hat{\nabla}_i u, u \rangle = \langle \hat{\nabla}_i h h^{-1}, u \rangle,
\end{equation*}
since the inner product only picks up the diagonal part. Therefore
\begin{equation*}
\begin{aligned}
g^{\bar{k} j} \partial_{\bar{k}} \partial_j |u|^2 &= 2 g^{\bar{k} j} \partial_{\bar{k}} \langle \hat{\nabla}_j h h^{-1}, u \rangle \nonumber\\
&= 2 \langle g^{\bar{k} j} \partial_{\bar{k}} (\hat{\nabla}_j h h^{-1}), u \rangle + 2 g^{\bar{k} j} \langle \hat{\nabla}_j h h^{-1}, \hat{\nabla}_k u \rangle.
\end{aligned}
\end{equation*}
This proves identity \eqref{Laplace-u-id}. For the estimate, it remains to show
\begin{equation} \label{mixed2log-ineq}
    g^{\bar{k} j} \langle \hat{\nabla}_j h h^{-1}, \hat{\nabla}_k u \rangle \geq \frac{1}{C} g^{\bar{k} j} \langle \hat{\nabla}_j u, \hat{\nabla}_k u \rangle.
\end{equation}
We compute
\begin{equation*}
 \langle \hat{\nabla}_j h h^{-1}, \hat{\nabla}_k u \rangle = (\hat{\nabla}_j h h^{-1})_a{}^b (\hat{\nabla}_{\bar{k}} u)_b{}^a
\end{equation*}
which gives
\begin{equation*}
     \langle \hat{\nabla}_j h h^{-1}, \hat{\nabla}_k u \rangle = \sum_{a,b} \partial_j \log \lambda_a \partial_{\bar{k}} \log \lambda_a + \log \frac{\lambda_a}{\lambda_b} \lambda_a^{-1} (\lambda_b-\lambda_a) A_{ja}{}^b A_{\bar{k} b}{}^a
\end{equation*}
and so
\begin{equation*}
    g^{\bar{k} j} \langle \hat{\nabla}_j h h^{-1}, \hat{\nabla}_k u \rangle = \sum_a |\partial \log \lambda_a|^2 + \sum_{a,b} \log \frac{\lambda_a}{\lambda_b} \lambda_a^{-1} (\lambda_a-\lambda_b) |A_a{}^b|^2.
\end{equation*}
On the other hand,
\begin{equation*}
 g^{\bar{k} j} \langle \hat{\nabla}_j u, \hat{\nabla}_k u \rangle =    \sum_a |\partial \log \lambda_a|^2 + \sum_{a,b} \bigg|\log \frac{\lambda_a}{\lambda_b} \bigg|^2 |A_a{}^b|^2
\end{equation*}
To show \eqref{mixed2log-ineq}, it suffices to prove
\begin{equation*}
 \log \frac{\lambda_a}{\lambda_b} \lambda_a^{-1} (\lambda_a-\lambda_b) \geq \frac{1}{C} \bigg| \log \frac{\lambda_a}{\lambda_b} \bigg|^2.   
\end{equation*}
Let $e^x = \lambda_a/\lambda_b$, and recall that by assumption there holds $|x| \leq R$. We are seeking an inequality of the form
\begin{equation*}
x(1-e^{-x}) \geq \frac{1}{C} x^2, \quad |x| \leq R.
\end{equation*}
This inequality indeed holds since $x^{-1}(1-e^{-x}) > 0$ for all $x \in \mathbb{R}$, so a constant $C>1$ exists such that $x^{-1}(1-e^{-x}) \geq C^{-1}$ on the compact region $|x| \leq R$. 
\end{proof}

We now apply this lemma and combine it with \eqref{perturbed-HYM} to obtain
\begin{equation} \label{HYM-int-est}
    \Delta_{g_0} |u|^2_{H_0} \geq C^{-1} |\nabla u|^2_{g_0,H_0}
\end{equation}
on $(X_0)_{\rm reg}$. Let $\eta: [0,\infty) \rightarrow [0,\infty)$ be a cutoff function such that $\eta \equiv 0$ on $0 \leq r \leq 1$ and $\eta \equiv 1$ on $r \geq 2$. Let $\eta_\delta(r) = \eta(r/\delta)$, so that $\eta_\delta$ vanishes on $\{ r < \delta \}$. Identifying $(X_0)_{\rm reg}$ with $\hat{X} \backslash E$, we have
\begin{equation*}
\int_{\hat{X}} \eta_\delta |\nabla u|^2  \, d {\rm vol}_{g_0} \leq C \int_{\hat{X}} \eta_\delta \Delta |u|^2 \, d {\rm vol}_{g_0} .
\end{equation*}
Integrating by parts, we can estimate
\begin{align*}
\int_{\hat{X}} \eta_\delta \Delta |u|^2 \, d {\rm vol}_{g_0} &\leq \int_{\hat{X}} |\nabla \eta_\delta| |\nabla u| |u| \, d {\rm vol}_{g_0} \nonumber\\
&\leq C \delta^{-1} \int_{ \{ \delta < r < 2 \delta \} } r^{-1} d {\rm vol}_{g_0} \nonumber\\
&\leq C \delta^4,
    \end{align*}
by using $|\nabla u| \leq C r^{-1}$ \eqref{HYM-limit-estimates} and $|\nabla \eta_\delta| \leq C \delta^{-1}$. Sending $\delta \rightarrow 0$, we conclude 
\begin{equation*}
\int_{\hat{X} \backslash E} |\nabla u|^2 d {\rm vol}_{g_0} = 0.    
\end{equation*}
\par We conclude that $|\nabla u| = 0$ on $\hat{X} \backslash E$, and since $\nabla$ is the Chern connection this implies
\begin{equation*}
    \bar{\partial} u = 0
\end{equation*}
on $\hat{X} \backslash E$. Since $u$ is bounded, by Hartogs' theorem we may extend $u$ to all of $\hat{X}$. Thus $u$ is a holomorphic endomorphism of the tangent bundle $T^{1,0} \hat{X}$. Since $\hat{X}$ is a Calabi--Yau threefold with finite fundamental group, we conclude (see \eqref{no-endos}) that $u$ must be a multiple of the identity: $u = \lambda \, {\rm id}$. This completes the proof of Lemma \ref{lem:uniqueness}.

\section{The (Local) Map \texorpdfstring{$\Phi$}{Phi}}
\label{app-Phi}

We collect several results regarding the map $\Phi: V_0 \bs \{ 0 \} \rightarrow V_1$. We recall that $\Phi$ was defined by
\begin{equation*}
    \Phi(z) = z + \frac{\overline{z}}{2 \|z\|^2}.
\end{equation*}
We will show that $\Phi$ is a diffeomorphism from $\{z \in V_0 \mid \|z\|^2 > 1/2 \}$ to $\{z \in V_1 \mid \| z \|^2 > 1\}$.

We start by taking the norm:
\begin{equation*}
    \|\Phi(z)\|^2 = \|z\|^2 \cdot \Big( 1 + \frac{1}{4 \|z\|^4} \Big).
\end{equation*}
Let $x = \| z \|^2$, and remark that the function
\begin{equation*}
    f(x) = x \Big( 1 + \frac{1}{4x^2} \Big), \qquad x > 0
\end{equation*}
is strictly increasing on $(\frac{1}{2},\infty)$. From this, we see that $\Phi$ is injective on $V_0 \bs \{ \| z \|^2 \leq \frac{1}{2} \}$. Indeed, suppose that $\Phi(z) = \Phi(z')$ with $\| z \|^2> {\frac{1}{2}}$ and $\| z' \|^2 > {\frac{1}{2}}$. Then the restriction of the domain implies that $\|z\|^2 = \|z'\|^2$. From here, we can split the equation $\Phi(z) = \Phi(z')$ into real and imaginary parts and a straightforward calculation shows $z=z'$.

Our next step is to find an inverse for $\Phi$. First, we note that 
\begin{equation*}
    g(x) = \frac{1}{2} (x + \sqrt{x^2 - 1}), \qquad x > 1
\end{equation*}
is an inverse of $f: (\frac{1}{2},\infty) \rightarrow (1,\infty)$.

Now, let $w \in V_1$, with $\|w\|^2 = B > 1$. It can be checked by direct calculation that
\begin{equation*}
    z_i = \Big( \frac{2g(B)}{2g(B)+1} \Big) \cdot \Re(w_i) - \sqrt{-1} \cdot \Big( \frac{2g(B)}{2g(B)-1} \Big) \cdot \Im(w_i),
\end{equation*}
satisfies: 
\par $\bullet$ $z \in V_0$,
\par $\bullet$ $\|z\|^2 = g(B) > \frac{1}{2}$,
\par $\bullet$ $\Phi(z) = w$. 

It follows that $\Phi$ is a bijection from $\{z \in V_0 \mid \|z\|^2 > \frac{1}{2}\}$ to $\{z \in V_1 \mid \|z\|^2 > 1 \}$. The coordinate expressions for $\Phi$ and the ones appearing in the previous computations show that $\Phi$ and its inverse are smooth, hence $\Phi$ is a diffeomorphism between these sets. Written in terms of $r(z) = \|z\|^{\frac{2}{3}}$, we get:

\begin{propn}
\label{propn-Phi-diffeo}
The map $\Phi \: V_0 \bs \{z \in V_0 \mid r(z) > 2^{-\frac{1}{3}}\} \rarr \{z \in V_1 \mid r(z) > 1 \}$ is a diffeomorphism.
\end{propn}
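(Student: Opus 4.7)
The plan is to translate the statement from the radial coordinate $r(z)=\|z\|^{2/3}$ to the squared norm $\|z\|^2$: the conditions $r(z)>2^{-1/3}$ and $r(w)>1$ correspond to $\|z\|^2>\tfrac{1}{2}$ and $\|w\|^2>1$, so it suffices to show that $\Phi$ restricts to a diffeomorphism between these two subsets. (The $V_0\bs$ in the statement appears to be a typographical slip; the preceding discussion computes the bijection on $\{z\in V_0:\|z\|^2>1/2\}$.)

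First I would verify that $\Phi$ maps $V_0$ into $V_1$ by a direct quadratic computation: using $\sum_iz_i^2=0$ and its conjugate, the cross terms in $\sum_i\Phi(z)_i^2$ vanish and the middle term contributes $\sum z_i\bar z_i/\|z\|^2=1$, giving $\sum_i\Phi(z)_i^2=1$. Next I would compute
\[
\|\Phi(z)\|^2 \;=\; \|z\|^2+\frac{1}{4\|z\|^2}\;=\;f\!\left(\|z\|^2\right),\qquad f(x)=x+\tfrac{1}{4x},
\]
where the cross terms collapse using $\sum z_i^2=0$ once more. Since $f'(x)=1-1/(4x^2)$ vanishes only at $x=\tfrac{1}{2}$, the function $f$ is strictly increasing on $(\tfrac12,\infty)$ with range $(1,\infty)$; in particular $\Phi$ lands in $\{w\in V_1:\|w\|^2>1\}$ on the prescribed domain.

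For injectivity I would split $\Phi$ into real and imaginary parts: writing $z=x+\sqrt{-1}\,y$ gives
\[
\Re\,\Phi(z)=\Bigl(1+\tfrac{1}{2\|z\|^2}\Bigr)x,\qquad \Im\,\Phi(z)=\Bigl(1-\tfrac{1}{2\|z\|^2}\Bigr)y.
\]
If $\Phi(z)=\Phi(z')$ with $\|z\|^2,\|z'\|^2>\tfrac12$, the strict monotonicity of $f$ forces $\|z\|^2=\|z'\|^2$; then the two scalar coefficients above are strictly positive and identical, so $x=x'$ and $y=y'$. For surjectivity I would write down the inverse explicitly. Given $w\in V_1$ with $\|w\|^2=B>1$, solve $f(x)=B$ for the larger root $g(B)=\tfrac{1}{2}(B+\sqrt{B^2-1})>\tfrac12$ and set
\[
z_i=\frac{2g(B)}{2g(B)+1}\,\Re\,w_i-\sqrt{-1}\,\frac{2g(B)}{2g(B)-1}\,\Im\,w_i.
\]
A routine computation using $\sum_iw_i^2=1$ shows $\sum_iz_i^2=0$ (so $z\in V_0$), that $\|z\|^2=g(B)$, and that $\Phi(z)=w$.

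Finally, both $\Phi$ and the inverse just constructed are manifestly smooth on the open sets in question (the denominators $2g(B)\pm1$ and $\|z\|^2$ are nonzero there, and $g$ is smooth on $(1,\infty)$), so the bijection is a diffeomorphism. The only mild subtlety is checking that the explicit formula for the inverse indeed produces a point in $V_0$; this is the one spot where the defining equation $\sum w_i^2=1$ and the particular coefficients $2g(B)/(2g(B)\pm1)$ must interact precisely, but it reduces to an elementary identity about $g(B)$ satisfying $f(g(B))=B$. No estimate or limiting argument is required: the whole proposition is an explicit exercise in the algebra of the map $\Phi$.
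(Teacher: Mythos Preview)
Your proposal is correct and follows essentially the same route as the paper: compute $\|\Phi(z)\|^2$ using $\sum z_i^2=0$, use strict monotonicity of $f(x)=x+\tfrac{1}{4x}$ on $(\tfrac12,\infty)$ together with the real/imaginary split for injectivity, write down the explicit inverse via the larger root $g(B)=\tfrac12(B+\sqrt{B^2-1})$, and conclude smoothness from the explicit formulas. You also correctly flag the typographical slip in the statement (the domain should be $\{z\in V_0: r(z)>2^{-1/3}\}$, not its complement), which matches the surrounding discussion in the paper.
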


We recall that we can compose the map $\Phi$ with scaling maps $S_R^*$. In particular, we defined $\Phi_t = S_{t^{\frac{1}{3}}} \circ \Phi \circ S_{t^{-\frac{1}{3}}}$. One can check that in coordinates, this map takes the form
\begin{equation*}
    \Phi_t(z) = z + \frac{t\overline{z}}{2 \|z\|^2}.
\end{equation*}

Smoothness of the scaling maps gives us the following corollary:

\begin{cor}
\label{cor-Phi_t-diffeo}
The map $\Phi_t \: V_0 \bs \{z \in V_0 \mid r(z) > 2^{-\frac{1}{3}} \cdot t^{\frac{1}{3}} \} \rarr \{z \in V_1 \mid r(z) > t^{\frac{1}{3}} \}$ are diffeomorphisms for $t > 0$.
\end{cor}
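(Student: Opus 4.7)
The plan is to decompose $\Phi_t$ as a composition of three smooth maps and show each is a diffeomorphism on a suitable set, with matching domains and ranges, then read off the conclusion.

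First I would recall the definition
\[
\Phi_t = S_{t^{1/3}} \circ \Phi \circ S_{t^{-1/3}},
\]
and the fact that for $R>0$, the scaling map $S_R(z)=R^{3/2}z$ sends $V_s$ diffeomorphically onto $V_{R^3 s}$ with smooth inverse $S_{R^{-1}}$, while the radius transforms as $r \circ S_R = R \cdot r$. In particular $S_{t^{-1/3}}$ is a diffeomorphism $V_0 \to V_0$ sending $\{r > \rho\}$ onto $\{r > t^{-1/3}\rho\}$, and $S_{t^{1/3}}$ is a diffeomorphism $V_1 \to V_t$ sending $\{r > \rho\}$ onto $\{r > t^{1/3}\rho\}$.

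Next I would apply Proposition \ref{propn-Phi-diffeo}: $\Phi$ is a diffeomorphism from $\{z \in V_0 \mid r(z) > 2^{-1/3}\}$ onto $\{z \in V_1 \mid r(z) > 1\}$. Tracing the composition through the three maps, a point $z \in V_0$ lies in the domain of $\Phi_t$ precisely when $S_{t^{-1/3}}(z) \in V_0$ has radius strictly greater than $2^{-1/3}$, i.e.\ $t^{-1/3} r(z) > 2^{-1/3}$, equivalently $r(z) > 2^{-1/3}\,t^{1/3}$. The image of this set under $\Phi \circ S_{t^{-1/3}}$ lies in $\{z \in V_1 \mid r(z) > 1\}$, and then $S_{t^{1/3}}$ sends it diffeomorphically onto $\{z \in V_t \mid r(z) > t^{1/3}\}$.

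Since $\Phi_t$ is a composition of diffeomorphisms between the specified open sets, it is itself a diffeomorphism, proving the corollary. There is no real obstacle here beyond bookkeeping: the only thing to be careful about is the radius/scaling correspondence under $S_R$, which follows directly from the identity $r \circ S_R = R \cdot r$ recalled in Section \ref{sect-CdlO-SM}.
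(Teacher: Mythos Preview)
Your argument is correct and is exactly the approach the paper takes: the paper's entire proof is the one-line remark ``Smoothness of the scaling maps gives us the following corollary,'' and you have simply unpacked that remark by tracking the domain and range through the composition $S_{t^{1/3}} \circ \Phi \circ S_{t^{-1/3}}$ using $r \circ S_R = R \cdot r$ and Proposition~\ref{propn-Phi-diffeo}. Your version is more explicit but not different in substance.
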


\end{appendix}

\bibliographystyle{amsalpha}
\cleardoublepage

\phantomsection

\renewcommand*{\bibname}{References}

\bibliography{biblio}

\end{document}